\newcommand{\vn}[1]{\lVert#1\rVert}
\newcommand{\IP}[2]{\left< #1 , #2 \right>}
\newcommand{\norm}[1]{\left\lVert{#1}\right\rVert}
\newcommand{\abs}[1]{\left\lvert{#1}\right\rvert}
\newcommand{\R}{\ensuremath{\mathbb{R}}}
\newcommand{\N}{\ensuremath{\mathbb{N}}}
\renewcommand{\L}{\ensuremath{\mathcal{L}}}
\newtheorem{thm}{Theorem}[section]
\newtheorem{prop}[thm]{Proposition}
\newtheorem{lem}[thm]{Lemma}
\newtheorem{defn}[thm]{Definition}
\theoremstyle{remark}
\newtheorem*{rmk}{Remark}
\newcommand{\python}[1]{\mintinline{python}{#1}}
\keywords{curve shortening flow, free boundary conditions, biological membranes, geometric
analysis} \subjclass[2000]{53C44\and 58J35}
\title{A curvature flow approach to dorsal closure modelling}
\author{Shuhui He, Ben Whale, Glen Wheeler, Valentina-Mira Wheeler}
\address{Shuhui He \\
           Institute for Mathematics and its Applications \\
           University of Wollongong\\
           Northfields Avenue\\
           Wollongong, NSW, 2522, Australia\\
           email: sh807@uowmail.edu.au
           }
\address{Ben Whale \\
           Institute for Mathematics and its Applications \\
           University of Wollongong\\
           Northfields Avenue\\
           Wollongong, NSW, 2522, Australia\\
           email: bwhale@uowmail.edu.au
           }
\address{Glen Wheeler \\
           Institute for Mathematics and its Applications \\
           University of Wollongong\\
           Northfields Avenue\\
           Wollongong, NSW, 2522, Australia\\
           email: glenw@uow.edu.au
           }
\address{Valentina-Mira Wheeler \\
           Institute for Mathematics and its Applications \\
           University of Wollongong\\
           Northfields Avenue\\
           Wollongong, NSW, 2522, Australia\\
           email: vwheeler@uow.edu.au
           }
\begin{document}

\begin{abstract}

In this paper we propose and study a curvature-based mathematical model for dorsal closure in embryonic drosophila.
Using an analysis that mixes maximum-principle and integral-estimates, we establish global existence and convergence for data that mimics the initial geometry of a dorsal closure event.
Further, we present a numerical approximation scheme for the flow, establishing stability, consistence, and convergence.
We also give sample simulations of the flow with initial configurations that include experimentally observed data.
\end{abstract}

\maketitle

\section{Introduction}
  \label{sec_intro}

Dorsal closure (DC) is a critical morphogenetic stage in \emph{Drosophila} embryogenesis.  
Because DC is a naturally occurring epithelial-closure event, it provides an accessible model for embryonic wound healing and related morphogenetic processes.  
Although DC has been studied extensively\,\cite{jacinto2002dynamic,wood2002wound,hutson2003forces,%
peralta2007upregulation,layton2009drosophila,almeida2011mathematical}, the biological and physical mechanisms that drive it are still not fully understood.

During DC an elliptical gap on the dorsal mid-line of the embryo closes over several hours.  
At the outset a roughly elliptical epithelial opening, covered by the amnioserosa (a transient extra-embryonic tissue), forms on the dorsal surface.  
The interface between epidermis and amnioserosa is called the \emph{leading edge} (LE).  
The major axis of the opening coincides with the anterior–posterior (AP) axis of the embryo.  
Soon after closure begins, the ellipse develops sharp corners at the AP termini and becomes eye-shaped; these corner points are the \emph{canthi}.  
Throughout DC the upper and lower halves of the LE advance towards the AP axis under several forces and ultimately meet to leave a seam (Figure~\ref{DCpic}).

\begin{figure}
  \centering
  \includegraphics[scale=0.35]{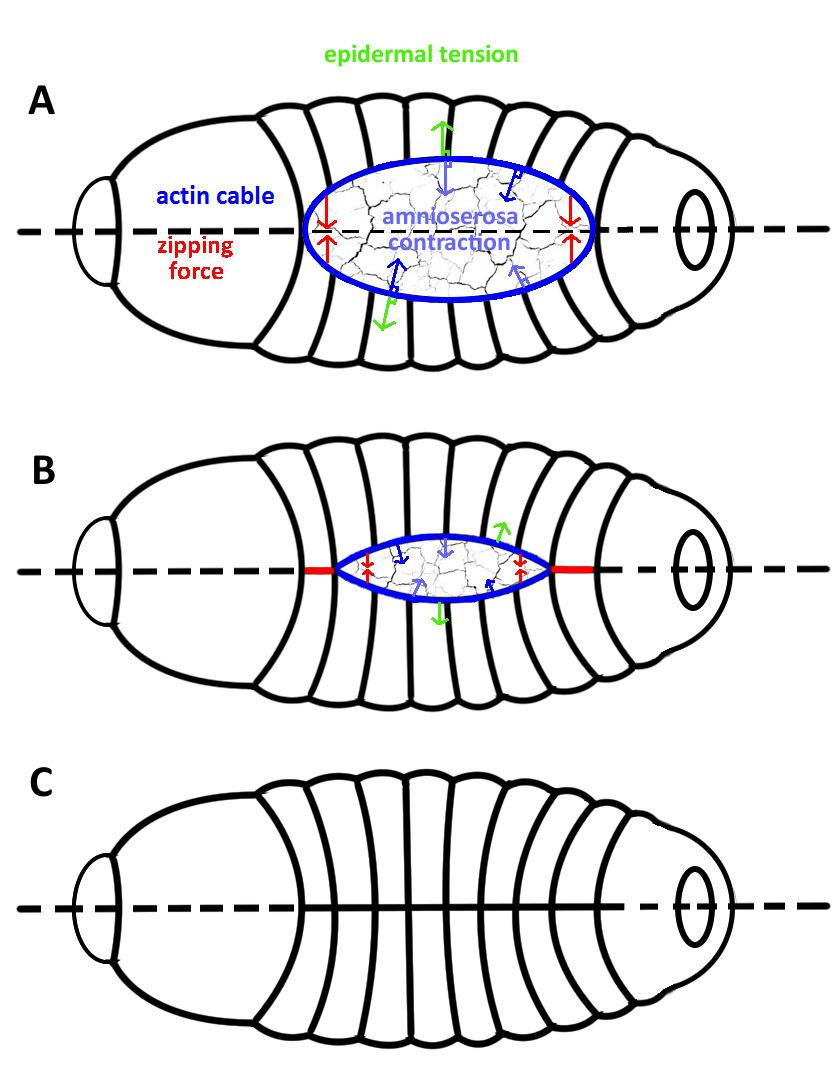}
  \caption{%
    Stages of dorsal closure and the principal forces involved.  The dotted line marks the AP axis.  
    (A) Initial stage: epidermal tension (green), amnioserosa contraction (light blue), zipping at the canthi (red), and global actin-cable contraction (blue) act on the opening.  
    (B) Intermediate stage: an eye-shaped opening develops; the red lines highlight the non-smooth LE geometry at the canthi produced by zipping.  
    (C) Final stage: the opening has closed, leaving a seam along the AP axis.}%
  \label{DCpic}
\end{figure}

Most modelling efforts focus on the evolution of the LE.  
Four principal force categories act on it during DC\,\cite{kiehart2000multiple,almeida2011mathematical,jacinto2002dynamic,wood2002wound}:  
(i) actin-cable contraction, (ii) zipping, (iii) amnioserosa contraction, and (iv) epidermal tension.  
A dense actomyosin cable encircles the opening at the onset of DC; its contraction produces the so-called purse-string effect, shrinking the opening at a rate proportional to curvature.  
\emph{Zipping} arises when micron-scale actin protrusions from opposite sides of the mid-line interlock and pull the tissue edges together, an effect most visible near the canthi.  
Amnioserosa contraction generates an inward normal force, whereas epidermal tension is a comparatively small outward force that resists closure.

A prototypical DC model, proposed by Hutson \emph{et al.}\,\cite{hutson2003forces}, balances these forces on two idealised circular arcs representing the LE and yields a pair of ordinary differential equations (ODEs) for the opening’s length and width.  
Subsequent work refined the empirical zipping term to capture asymmetric scenarios\,\cite{peralta2007upregulation,layton2009drosophila}.  
Almeida \emph{et al.}\,\cite{almeida2011mathematical} advanced the framework with a partial differential equation (PDE) scheme and an alternative treatment of zipping based on chord–arc ratios, but provided no rigorous mathematical analysis.

Building on these ideas, we propose a modified DC model that admits rigorous analysis.  
Idealising the LE as a closed plane curve $\gamma:[0,1]\!\times\![0,T)\to\R^2$, we let it evolve according to
\begin{equation}\label{main}
  \partial_t\gamma(p,t)=\alpha(p,t)\,\vec{k}(p,t)+\beta(p,t)\,\nu(p,t)+\zeta(p,t),
\end{equation}
where $\vec{k}$ is the curvature vector, $\nu$ the inward unit normal, $\alpha$ encodes actin-cable tension, $\beta$ represents the combined effect of amnioserosa contraction and epidermal tension, and $\zeta$ models zipping through the chord–arc ratio.

Two simplifications make \eqref{main} tractable.  
First, because actin-cable tension and amnioserosa contraction act in the same direction and are difficult to distinguish experimentally\,\cite{almeida2011mathematical}, we combine them into a single curvature-driven term.  
Second, instead of modelling the highly localised zipping forces at the canthi directly, we introduce a smooth global term that captures their net effect.  
With these adjustments, the evolution reduces to a curvature-driven (curve-shortening) flow with a restriction term—a class of flows that has been studied extensively\,\cite{ecker1989mce,gage1986heat,grayson1987heat,huisken1986cch,STW2003,CP2009}.

We orient the dorsal opening so that the AP axis coincides with the $x$-axis and the $y$-axis passes through its midpoint.  
Assuming bilateral symmetry, we analyse only the upper‐right quarter of the LE, thereby simplifying the analysis slightly.  
Denoting the curve by $\gamma:[0,1]\!\times\![0,T)\to\R^2$, the specific flow problem is
\begin{equation}\label{DC problem}
\begin{aligned}
  \partial_t\gamma(u,t) &= \kappa(u,t)\,\nu(u,t) + Z^{\perp}(u,t), & &(u,t)\in(0,1)\times[0,T),\\
  \langle\tau,e_2\rangle &= 0, & &u=0,\\
  \gamma(u,t) &= (\rho_0,0), & &u=1,\\
  \gamma(u,0) &= \gamma_0(u), & &u\in[0,1],
\end{aligned}
\end{equation}
with
\begin{equation}\label{Zipping}
  Z(u,t) = -\frac{1}{L(t)}\langle\nu,e_1\rangle\,\langle\nu,e_2\rangle\,e_2,
\end{equation}
where $L(t)$ is the arclength of $\gamma(\cdot,t)$, and $e_1,e_2$ are the standard basis vectors in~$\R^2$.

The first boundary condition enforces orthogonality to the $y$-axis so that $\gamma$ can be reflected smoothly; the second anchors the curve to the $x$-axis.  
The term $Z(u,t)$ ensures that zipping is most significant when the leading edge is about $54$ degrees to the $x$-axis (approximately the angle at the canthi), and that the effect of zipping over the entire curve increases as the length decreases (through the factor~$1/L(t)$).

\subsection{Analysis}

While the flow \eqref{DC problem} is significantly nonlinear, this is confined to lower-order terms and so local well-posedness is standard.

Global analysis however is far more delicate.
Here, we combine in a novel way  maximum-principle and integral-estimate based analyses, yielding finally a powerful convergence result, Theorem \ref{THMmain} below.
Our treatment includes a weak solutions framework, enabling analysis of the smoothing effect.
The main utility of this result is that it confirms the model behaves as expected with initial data that resembles the geometry of a real dorsal closure event.
In formal terms, we prove:

\begin{thm}
Let $\gamma: [0,L(t)]\times[0,T)\to\R^2$ be the solution to \eqref{DC problem} for a given initial curve $\gamma_0:[0,L(0)]\rightarrow\R^2$ of class $W^{2,\infty}$.
  Suppose there exists a $C_G \in (0,1)$ such that $\gamma_0$ is $e_2$-graphical
  with
  \[
    G_{-e_2}(s,0) := \IP{\nu(s,0)}{-e_2} \ge C_G
    \,.
  \]
  Assume that $\gamma_0(s)\cdot e_2 \le \gamma_0(0)\cdot e_2$, $\gamma_0$ has positive curvature at $u=0$, negative curvature at $u=1$, and exactly one point of zero curvature.

  Then:
  \begin{itemize}
  \item The maximal time of existence is infinite;
  \item For any $\varepsilon>0$, the solution is smooth with uniform estimates in $C^{\infty}$ on $(\varepsilon,\infty)$.
  \item As $t\rightarrow\infty$ the solution converges exponentially fast in the
  smooth topology to the horizontal line segment connecting the origin to
  $(\rho_0,0)$.
  \end{itemize}
  \label{THMmain}
  \end{thm}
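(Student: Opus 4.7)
The plan is to combine a geometric maximum-principle core with an integral-estimate bootstrap, as foreshadowed in the \emph{Analysis} remarks. Local well-posedness for $W^{2,\infty}$ initial data follows by treating the flow as a quasilinear parabolic equation with a bounded nonlocal lower-order term $Z^\perp$; the weak-solution framework extends this below classical regularity. Two a~priori bounds come essentially for free: since $\gamma(0,t)$ lies on the $y$-axis and $\gamma(1,t)=(\rho_0,0)$ is fixed, the triangle inequality forces $L(t)\ge\rho_0$, so the nonlocal factor $1/L(t)$ in \eqref{Zipping} is uniformly controlled, and consequently $\abs{Z^\perp}\le 1/\rho_0$ throughout the flow.

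The first geometric step is to propagate graphicality and the sign/height assumptions. Writing $G=\IP{\nu}{-e_2}$ and differentiating along the flow produces an equation of the form $\partial_t G=\partial_s^2 G+\kappa^2 G+(\text{bounded $Z$-terms})$; at $u=0$ the boundary condition $\IP{\tau}{e_2}=0$ forces $\nu=\pm e_2$, which with the sign prescribed by $\gamma_0$ pins $G(0,t)=1$, while at $u=1$ I would run a companion maximum-principle argument for $\kappa$ (whose negative sign at $u=1$ is initially compatible and preserved via the boundary equation for $\kappa$) to rule out the tangent rotating past vertical. Combined with a scalar maximum principle for the height $\gamma(s,t)\cdot e_2$, these give a uniform graphical constant $G\ge C_G$, a uniform $L^\infty$ height bound, and preservation of the single-inflection-point structure.

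The main obstacle is the curvature estimate, because $Z^\perp$ is nonlocal and not of variational form, so there is no obvious energy monotonicity. Following standard integral-estimate technology for curvature flows, I would derive $\frac{d}{dt}\int\kappa^2\,ds\le C\int\kappa^2\,ds+C$ by expanding using $\partial_t\kappa=\partial_s^2\kappa+\kappa^3+(\text{$Z$-perturbation})$, controlling the perturbation by $\abs{Z^\perp}\le 1/\rho_0$ and closing via Gagliardo--Nirenberg-type interpolation on the curves (whose geometry is uniform thanks to $G\ge C_G$ and the length bound). Gr\"onwall controls curvature on any finite interval, and the analogous hierarchy for $\int(\partial_s^n\kappa)^2\,ds$ promotes this to uniform $C^\infty$ bounds on $(\varepsilon,\infty)$ for every $\varepsilon>0$, simultaneously yielding global existence and the claimed smoothing effect.

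For convergence, $\frac{d}{dt}L(t)=-\int\kappa^2\,ds+(\text{nonlocal correction})$, with the correction controlled by the curvature bounds; together with $L(t)\ge\rho_0$ this forces $\int_0^\infty\int\kappa^2\,ds\,dt<\infty$. Subsequential smooth convergence then produces a stationary graphical solution of \eqref{DC problem} which, under the boundary conditions and the preserved structure, must be the horizontal segment from the origin to $(\rho_0,0)$. For the exponential rate I would linearise in the graphical gauge $\gamma(x,t)=(x,h(x,t))$ around $h\equiv 0$, obtaining the heat equation on $[0,\rho_0]$ with Neumann condition at $x=0$ and Dirichlet at $x=\rho_0$, whose first eigenvalue $\lambda_1=(\pi/(2\rho_0))^2$ is strictly positive; this yields exponential $L^2$ decay of $h$, which the higher-order interpolation estimates upgrade to exponential decay in every $C^k$.
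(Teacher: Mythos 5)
Your plan shares the paper's maximum-principle core (graphicality preservation, height bound, length monotonicity) but then diverges, and the two places where it diverges are where it breaks. The claimed estimate $\tfrac{d}{dt}\int\kappa^2\,ds\le C\int\kappa^2\,ds+C$ does not follow from the evolution $\partial_t k=k_{ss}+z_{ss}+k^2(k+z)$ together with $\partial_t(ds)=-k(k+z)\,ds$: after integration by parts you are left with $\int k^4\,ds$ (plus boundary contributions at the Neumann and Dirichlet ends that you never address, and which are genuinely delicate here), and Gagliardo--Nirenberg interpolation turns $\int k^4$ into superlinear expressions in $\int\kappa^2$, not a linear Gr\"onwall term; graphicality bounds the gradient, not the curvature. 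Even granting finite-interval control, Gr\"onwall produces constants growing with $T$, so your assertion of \emph{uniform} $C^\infty$ bounds on $(\varepsilon,\infty)$ at that stage is unjustified, and you need it later (circularity risk). The paper avoids both issues by never estimating $\int\kappa^2$ in the arclength gauge: global existence is immediate from the gradient bound plus parabolic regularity (Theorem \ref{LTE}), and the higher estimates are run in the graphical gauge on $h_t$, $h_{tu}$, $h_{tt}$, $h_{t^pu}$, where the boundary terms vanish ($h_t,h_{tt}$ at the Dirichlet end, $h_u,h_{tu}$ at the Neumann end), yielding only \emph{linear-in-time} growth (Propositions \ref{propw52}, \ref{propwkp}, Theorem \ref{thmallest}); the growth is then defeated by the exponential area decay.

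Your route to the exponential rate is also incomplete. The linearisation of $h_t=\frac{h_{uu}}{1+h_u^2}+\frac1{L[h]}\frac{h_u}{1+h_u^2}$ at $h\equiv0$ is $w_t=w_{uu}+\rho_0^{-1}w_u$ (the zipping term contributes a drift), not the heat equation with eigenvalue $(\pi/2\rho_0)^2$ --- a minor slip --- but more importantly the passage from subsequential convergence of a smooth limit to the hypotheses of a linearised-stability argument (the solution must enter and remain in a small neighbourhood of the segment in a norm strong enough to dominate the nonlocal nonlinearity) is asserted rather than proved. The paper needs none of this: it proves the quantitative decay $A(t)<A(0)e^{-C_G^2t/(\rho_0L(0))}$ directly from the flow using only $G_{-e_2}\ge C_G$ and the preserved height bound $\gamma_2(s,t)\le\gamma_2(0,t)$ (Proposition \ref{DC_area evolution}), and a tailored interpolation inequality (Proposition \ref{ourinter}) converts area decay plus the linear-in-time derivative bounds into exponential decay of every $\Vert\partial_u^jh\Vert_2$, hence smooth convergence with an explicit rate. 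Your strategy could probably be repaired (e.g.\ Ecker--Huisken-type pointwise curvature estimates in the graphical gauge, plus a genuine attraction argument near the equilibrium), but as written the curvature estimate, the uniform-in-time bounds, and the exponential-rate step all contain real gaps.
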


\begin{rmk}
The initial assumptions on the geometry of $\gamma_0$ mimic the observed DC geometry: the initial curve $\gamma_0$ has positive curvature at $u=0$, negative curvature at $u=1$, and exactly one point of zero curvature.
It is graphical over the $x$-axis and, while not necessarily monotone from the reflection axis, has maximum width there.
\end{rmk}

\begin{rmk}
  If $\gamma_0$ is more regular, uniform higher-order estimates hold on $(0,\infty)$—a prerequisite for proving convergence of the numerical scheme, which requires bounded fourth derivatives up to $t=0$.
\end{rmk}

The analysis-focused sections are as follows:
\begin{itemize}
\item[Section 2] In which we briefly treat short time existence
\item[Section 3] Devoted to fundamental commutation identities, compatibility conditions, and length evolution
\item[Section 4] Specialising to graphical data, we give the main estimates used for global existence and the weak formulation of the flow
\item[Section 5] Leveraging integral estimates to establish convergence (collapse) to a zero-area line segment
\end{itemize}

\begin{figure}
    \centering
    \begin{subfigure}[t]{0.45\textwidth}
      \includegraphics[width=\textwidth]{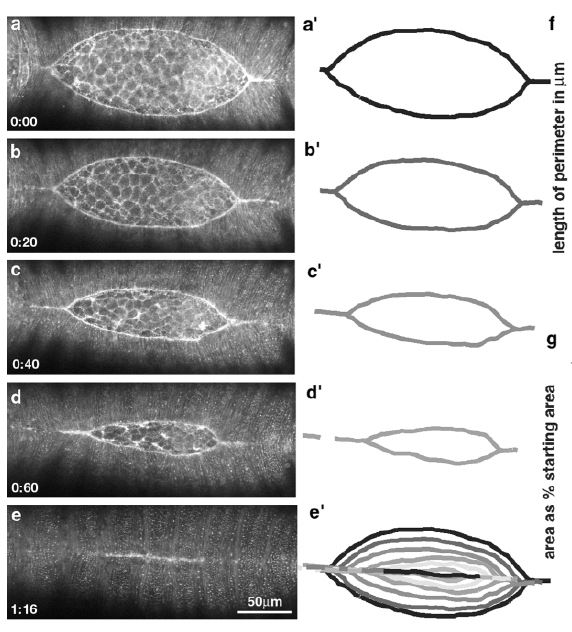}
      \caption{One experimental observation of Drosophila dorsal closure of the
      native type. Picture taken from Figure 3 of \cite{kiehart2000multiple}
      a paper which investigates the 
      forces which contribute to the movement of the leading edge.
We see 7 contours showing the graduate closure of the dorsal
      opening. Each contour is drawn 20 units of time apart from the previous
      one, except the last one is captured 16 time units from the second last
      one.}
      \label{subfigure_experiemnt}
    \end{subfigure}
    \hspace{1em}
    \begin{subfigure}[t]{0.45\textwidth}
      \includegraphics[width=\textwidth]{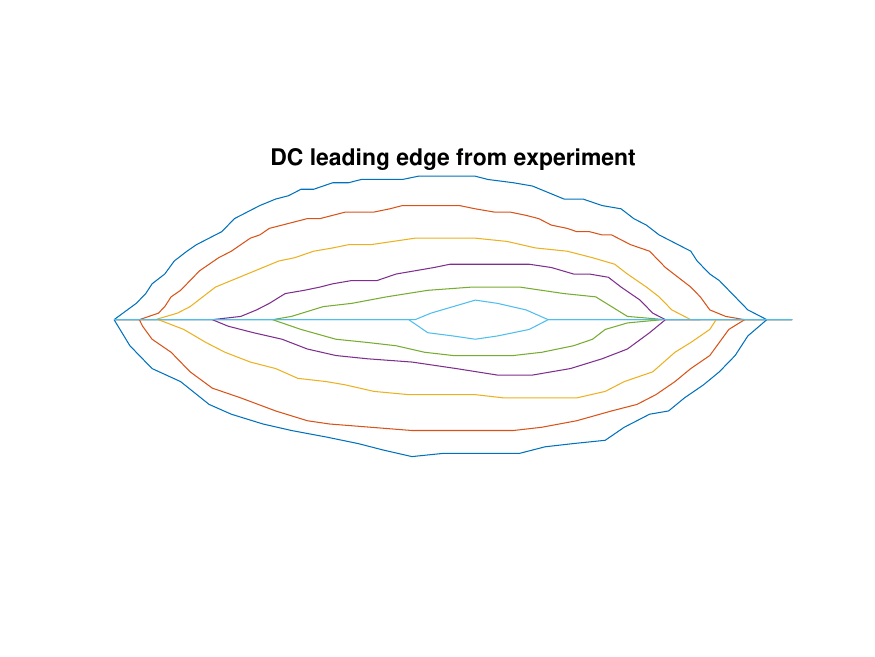}
      \caption{The data extracted from Figure \ref{subfigure_experiemnt} using
      the program ``GetData Graph Digitizer'' and Matlab.}
      \label{subfigure_experiemnt_interp}
    \end{subfigure}
  \end{figure}

\begin{figure}[ht]
  \centering
  \begin{subfigure}[t]{0.31\textwidth}
    \includegraphics[width=\textwidth]{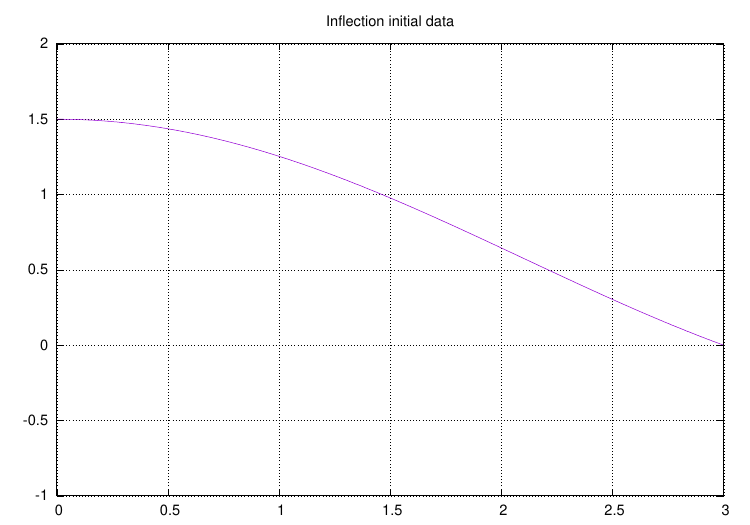}
    \caption{Inflection}\label{fig_initial_inflection}
  \end{subfigure}\hfill
  \begin{subfigure}[t]{0.31\textwidth}
    \includegraphics[width=\textwidth]{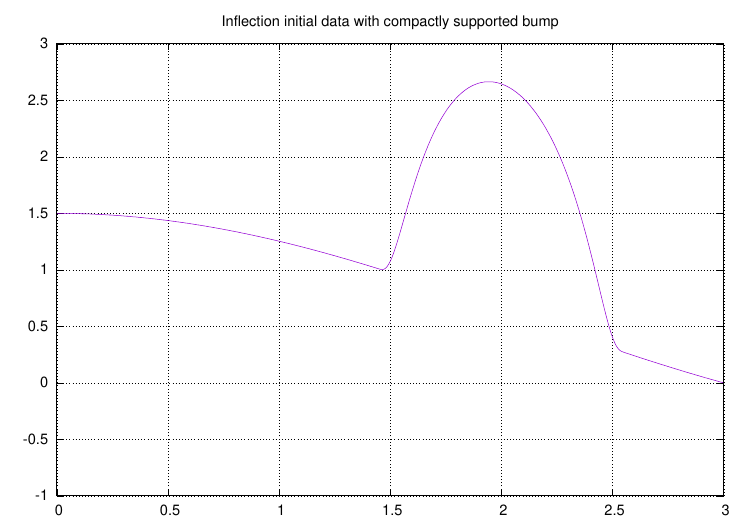}
    \caption{Bump}\label{fig_initial_bump}
  \end{subfigure}\hfill
  \begin{subfigure}[t]{0.31\textwidth}
    \includegraphics[width=\textwidth]{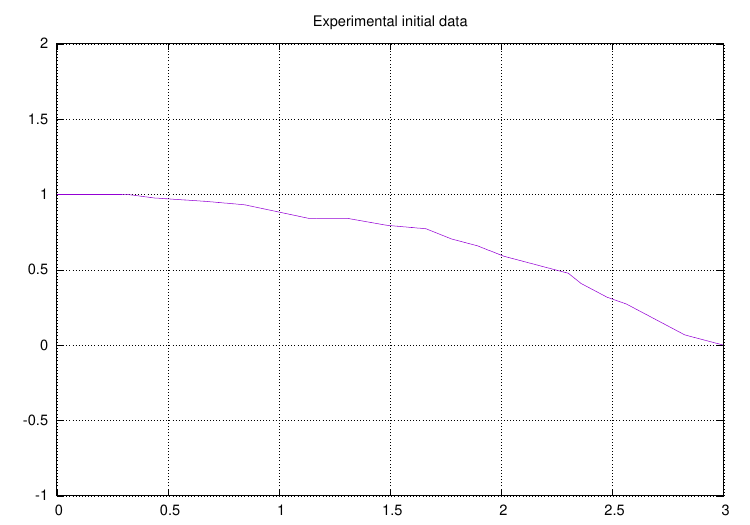}
    \caption{Experimental}\label{fig_initial_experimental}
  \end{subfigure}
  \caption{Initial data used in the simulations.}
  \label{fig:initial-data}
\end{figure}

\begin{figure}[htbp]
  \centering
  \captionsetup[subfigure]{justification=centering}

  \begin{subfigure}[t]{\linewidth}
    \centering
    \includegraphics[width=0.670\linewidth]{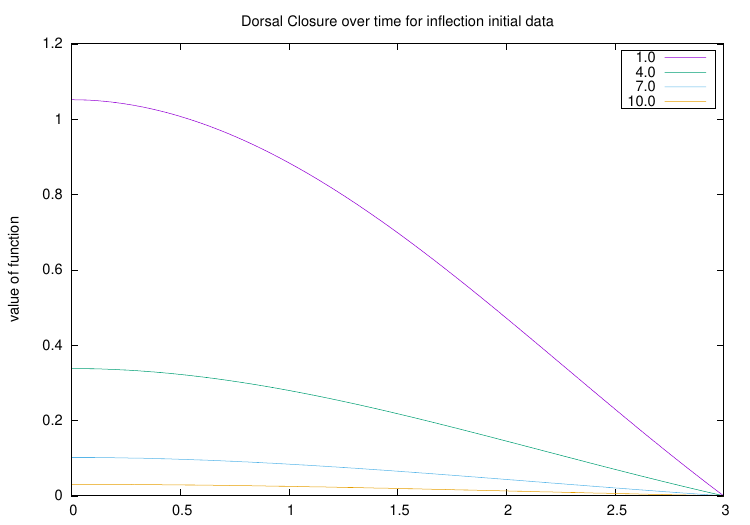}
    \caption{Inflection initial geometry}
    \label{fig:evo-inflection}
  \end{subfigure}\par\medskip

  \begin{subfigure}[t]{\linewidth}
    \centering
    \includegraphics[width=0.670\linewidth]{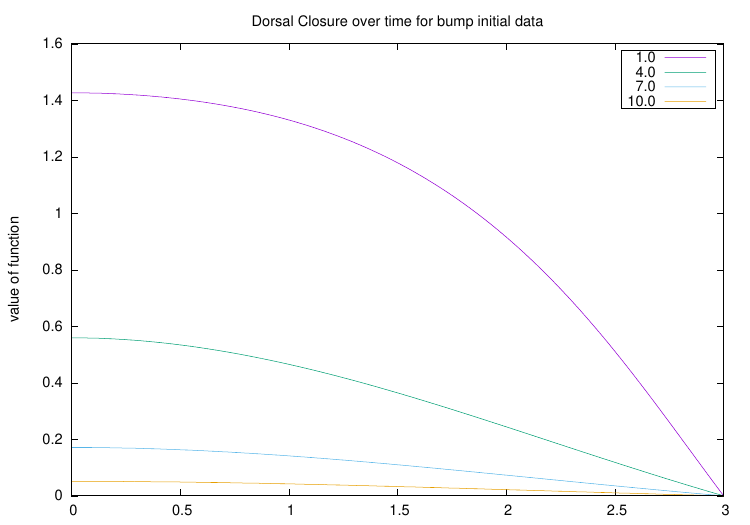}
    \caption{Bump initial geometry}
    \label{fig:evo-bump}
  \end{subfigure}\par\medskip

  \begin{subfigure}[t]{\linewidth}
    \centering
    \includegraphics[width=0.670\linewidth]{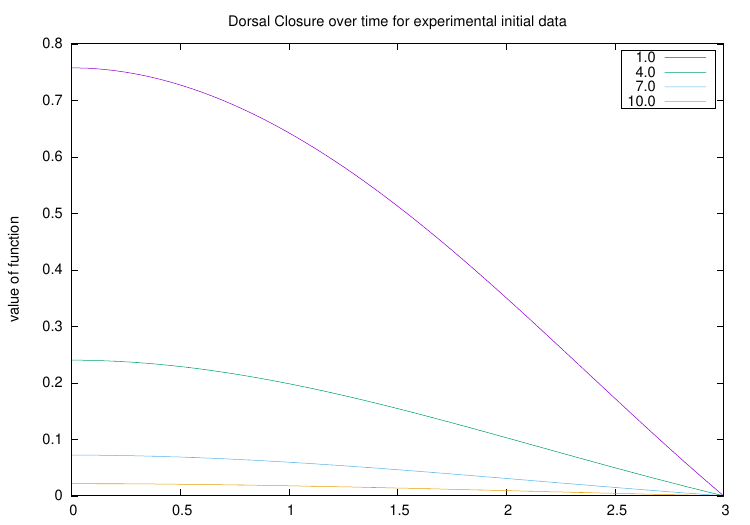}
    \caption{Experimental initial geometry}
    \label{fig:evo-experimental}
  \end{subfigure}

  \caption{Numerical evolution of the leading edge for the three initial geometries.  
           Curves are plotted at equal time intervals.}
  \label{fig:solution-evolution}
\end{figure}

\subsection{Numerical treatment}

The second contribution of this paper is a considered numerical treatment of the flow.
Our plan is to use the graphical formulation (equation \eqref{eqgraph} to come) and prove stability, consistency and convergence results. We prove all results directly as there do not appear to be sufficiently general results for non-local PDE. 

There are two main ideas.
First, that the non-linear terms can be controlled by an $L^\infty$ norm of a derivative of the solution; and, second, we can make use of the existing estimates for the solution of the PDE. The result is that we first prove consistency then convergence and then stability.

\subsection*{Overview of the Numerical Experiments}

Using the described numerical scheme, we performed high-resolution simulations of the flow \eqref{DC problem}.
We worked with the open-source \texttt{COFFEE} infrastructure\,\cite{doulis2019coffee}.  
All runs were performed on a standard desktop (Intel\textsuperscript{\tiny\textregistered} i5-10400 CPU, 16 GB RAM, Ubuntu 20.04) on successively refined one-dimensional grids with $n=20\times2^{\,i}$, $i=0,\dots,10$.  
We tested three representative initial geometries (Figure \ref{fig:initial-data}):

\begin{enumerate}
  \item \textbf{Inflection profile} – a single-inflection sinusoid $g_1$ calibrated to a length–height ratio of $2{:}1$;
  \item \textbf{Bump profile} – a compact perturbation $g_2$ that introduces a localised bulge of controlled height and width;
  \item \textbf{Experimental profile} – a curve extracted from time-lapse microscopy of wild-type \emph{Drosophila} dorsal closure.
\end{enumerate}

Figure \ref{fig:solution-evolution} illustrates the ensuing evolution for each case: the LE shortens monotonically, remains smooth, and converges rapidly to a horizontal segment, in agreement with Theorem \ref{THMmain} and aligning well with observed experimental evidence.  Measured $\log_2 L^\infty$ errors against the finest grid confirm first-order spatial convergence (Table \ref{table_error}, Numerical Section).

There are two sections of the paper devoted to numerics:

\begin{itemize}
\item[Section 6] Consists of the definition and rigorous numerical analysis of our scheme, including stability, consistency and convergence
\item[Section 7] Containing the precise definitions of the initial data simulated and tables of error estimates
\end{itemize}

\section{Short time existence for parametrised curves}
  \label{sec_short_time_param}

  We consider a one-parameter family of curves $\gamma:[0,1]\times[0,T)\to\R^2$ with velocity satisfying
  \begin{equation}
  \label{EQnv}
  \partial_t\gamma = \kappa + Z^\perp
  \end{equation}
  where $\kappa$ is the curvature vector of $\gamma$ and $Z^\perp = \IP{Z}{\nu}\nu$ is the normal part of the zipping force $Z$, given by
  \[
  Z = -\frac{\nu_1\nu_2}{L(t)} e_2\,.
  \]
  Here we have used $L(t)$ to denote the length of $\gamma(\cdot,t)$ and $\nu_1$,
  $\nu_2$ to denote the first and second components of the inward-pointing normal
  vector $\nu$ in the standard basis $\{e_1,e_2\}$.

  Fixing a clockwise parametrisation for the evolving curve $\gamma:[0,1]\times
  [0,T]\to \R^2$, and setting $\gamma(u,t) = (x(u,t),y(u,t))$, we have for the unit tangent, normal and
  curvature vectors the formulae
  \begin{align*}
    \tau &= |\gamma_u|^{-1} (x_u,y_u);\quad
    \nu  = |\gamma_u|^{-1} (y_u,-x_u);\\
    \kappa &= |\gamma_u|^{-2} (x_{uu},y_{uu}) + |\gamma_u|^{-1}(|\gamma_u|^{-1})_u (x_u,y_u)
           = |\gamma_u|^{-2} (x_{uu},y_{uu}) + (|\gamma_u|^{-1})_u \tau = k\nu
           \,,
  \end{align*}
  where
  \[
    k = |\gamma_u|^{-2} \IP{\gamma_{uu}}{\nu}
    \,.
  \]
  Denoting by $s$ the arclength paramter of the curve, so that $\partial_s=\partial_u/|\gamma_u|$,
  the classical Frenet--Serret formulas are
  \begin{equation}\label{6bis}
  \gamma_s = \tau, \qquad \gamma_{ss} = \tau_s = k\nu, \qquad \nu_s = -k\tau
  \,.
  \end{equation}
  The zipping force can be expressed as
  \[
    Z^\perp
    = -\frac{\nu_1\nu_2^2}{L} \nu
    = -\frac{y_ux_u^2}{|\gamma_u|^3L} \nu
    \,.
  \]
  The flow problem \eqref{EQnv} becomes
  \begin{equation}\label{eqpar}
  \begin{cases}
  \partial_t\gamma 
  = \Big(|\gamma_u|^{-2} \IP{\gamma_{uu}}{\nu} -\frac{y_ux_u^2}{|\gamma_u|^3L}
    \Big) \nu\,,
    \\
    \tau(0,t) = e_1\quad\text{ and }\quad\gamma(1,t) = (\rho_0,0)\,.
  \end{cases}
  \end{equation}
  supplemented with boundary conditions $\tau(0,t) = e_1 = (1,0)$ and $\gamma(1,t) = (\rho_0,0)$.

  Using a standard proof for smooth data (see for example Huisken-Polden
  \cite{HP}, Ladyzhenskaya-Uraltseva-Solonnikov \cite{LSU}, Lunardi \cite{Lu} and
  Sharples \cite{JJS}) we have the following local existence result for
  \eqref{eqpar}.

  \begin{thm}\label{STE}
  Let $\gamma_0:[0,1]\to \R^2$ be a smooth map such that $|\gamma_0'(u)|>0$ for
  all $u\in [0,1]$, satisfying $\tau_0(0) = (1,0)$ and $\gamma_0(1) = (\rho_0,0)$
  for a given $\rho_0>0$, as well as the compatibility condition $\kappa_0(1) =
  -Z^\perp_0(1)$.
  There exist $T>0$ and a smooth solution to \eqref{eqpar}, defined on
  $[0,1]\times [0,T)$, such that $\gamma(u,0)=\gamma_0(u)$ for all $u\in [0,1]$.
  \end{thm}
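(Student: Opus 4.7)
The plan is to reduce \eqref{eqpar} to a scalar quasilinear parabolic initial/boundary value problem via a normal-graph parametrization over $\gamma_0$. Write
\[
  \gamma(u,t) = \gamma_0(u) + h(u,t)\,\nu_0(u);
\]
using $\gamma_u = |\gamma_0'|(1-h k_0)\tau_0 + h_u \nu_0$ together with \eqref{6bis}, one expresses $|\gamma_u|^2$, $\IP{\gamma_{uu}}{\nu}$, $\IP{\nu_0}{\nu}$, and the zipping coefficient as smooth functions of $u$, $h$, $h_u$, and $h_{uu}$. Projecting \eqref{EQnv} onto $\nu_0$ yields a scalar equation of the form
\[
  h_t \;=\; a(u,h,h_u)\,h_{uu} + F(u,h,h_u) + \frac{1}{L(t)}\,G(u,h,h_u),
\]
with $a(u,0,0) = |\gamma_0'(u)|^{-2} > 0$ by the hypothesis $|\gamma_0'|>0$, so the principal part is uniformly parabolic for $\|h\|_{C^1}$ small.

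Next I would translate the boundary conditions. Since $\nu_0(1)\neq 0$, the Dirichlet condition $\gamma(1,t)=(\rho_0,0)=\gamma_0(1)$ is equivalent to $h(1,t)=0$; and using $\tau_0(0)=e_1$, the decomposition of $\gamma_u(0,t)$ in the frame $\{\tau_0(0),\nu_0(0)\}$ shows that $\tau(0,t)=e_1$ is equivalent to $h_u(0,t)=0$. The initial condition $h(\cdot,0)\equiv 0$ satisfies zeroth-order boundary compatibility automatically. For first-order compatibility at $u=1$, $h(\cdot,0)\equiv 0$ also gives $h_{uu}(\cdot,0)\equiv 0$, so the requirement $h_t(1,0)=0$ collapses, under the identifications of $F$ and $G$ at $t=0$, to $\kappa_0(1)+Z^\perp_0(1)=0$—precisely the stated hypothesis.

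It then remains to solve the quasilinear parabolic system
\[
\begin{cases}
  h_t = a(u,h,h_u)\,h_{uu} + F(u,h,h_u) + G(u,h,h_u)/L(t), & (u,t)\in(0,1)\times(0,T), \\
  h_u(0,t) = 0,\quad h(1,t) = 0, & t\in(0,T), \\
  h(u,0) = 0, & u\in[0,1].
\end{cases}
\]
If $L(t)$ were a prescribed smooth positive function, this is a textbook mixed-boundary quasilinear parabolic IBVP directly covered by \cite{HP,LSU,Lu,JJS}, and the smoothness of $\gamma_0$ together with the compatibility assumption yields a smooth local solution. The main obstacle is the non-local coupling through $1/L(t)$. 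I would dispose of it by a Banach fixed-point iteration on a small ball in $C([0,T_0])$ centred at the constant $L(\gamma_0)$: given a candidate $\ell(t)$ with $\ell(0)=L(\gamma_0)$, invoke the local theory above (with $L$ replaced by $\ell$) to produce $h=h[\ell]$, and set $\Phi(\ell)(t) := \int_0^1 |\gamma_u(\cdot,t;\ell)|\,du$. Contractivity for $T_0$ small follows because $L(\gamma_0)>0$ keeps $1/\ell$ bounded, the local solution map $\ell\mapsto h$ is Lipschitz in an appropriate parabolic Hölder norm, and $h\mapsto L[\gamma_0+h\nu_0]$ is Lipschitz on a $C^1$-neighbourhood of $0$. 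The unique fixed point is the desired solution, and smoothness is inherited from the local theory on each iterate.
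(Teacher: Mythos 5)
Your proposal is correct and follows essentially the same route as the paper: a normal-graph reduction $\gamma=\gamma_0+h\,\nu_0$ to a uniformly parabolic quasilinear IBVP with $h_u(0,t)=0$, $h(1,t)=0$, solved by the standard references for a prescribed positive function in place of $L$, with the non-local term handled by a fixed-point argument (the paper cites McCoy for this step rather than spelling out the contraction as you do).
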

  \begin{proof}
  For smooth data the proof is standard and so we give only a brief outline.
  If we write $\gamma([0,1],t)$ as the normal graph of a function $f(u,t)$ over
  the initial curve $\gamma_0([0,1])$, so that
  \begin{equation}
  \label{formulaforg}
  \gamma(u,t) = \gamma_0(u)+f(u,t)\nu_0(u)\,,
  \end{equation}
  %%
  %% \gamma_u = \gamma^0_u - fk^0\tau^0|\gamma^0_u| + f_u\nu^0
  %%
  %% |\gamma_u| = |\gamma^0_u|\sqrt{ (1 - fk^0)^2 + f_{s_0}^2 }
  %%
  we have
  \begin{align*}
  |\gamma_u| 
   = |\gamma^0_u|\sqrt{ (1 - fk^0)^2 + f_{s_0}^2 }
  \,.
  \end{align*}

  The evolution equation \eqref{eqpar} becomes
  \begin{align}
  f_t
  &= \Big(|\gamma_u|^{-2} \IP{\gamma_{uu}}{\nu} -\frac{y_ux_u^2}{|\gamma_u|^3L}
    \Big) \IP{\nu_0}{\nu}
  \notag
  \\&= \bigg(
    f_{uu}\frac{|\gamma^0_u|(1-k_0f)}{(|f_u|^2 + (1-k_0f)^2|\gamma^0_u|^2)^{\frac32}}
  \notag\\&\qquad
    + f_u\frac{|\gamma_u^0|(-2k_0f_u-k^0_uf - (1-k_0f)(|\gamma_u|^{-1})_u }{(|f_u|^2 + (1-k_0f)^2|\gamma^0_u|^2)^{\frac32}}
  \notag\\&\qquad
    - \frac{
  (y^0_u(1-k_0f) - f_ux^0_u|\gamma^0_u|^{-1})(x^0_u(1-k_0f) + f_uy^0_u|\gamma^0_u|^{-1})^2
  }{L(t)(|f_u|^2 + (1-k_0f)^2|\gamma^0_u|^2)^{\frac32}}
  \notag\\&\qquad
    + \frac{|\gamma^0_u|k_0(1-k_0f)^2}{(|f_u|^2 + (1-k_0f)^2|\gamma^0_u|^2)^{\frac32}}
    \bigg)
    \frac{(1-k_0f)|\gamma^0_u|^2}{|\gamma^0_u|\,\sqrt{|f_u|^2 + (1-k_0f)^2|\gamma^0_u|^2}}
  \,.
  \label{quasi}
  \end{align}
  supplemented with boundary conditions
  \begin{align*}
  %-k_0(0)f(0,t)\gamma^0_u(0) + f_u(0,t)\nu^0(0) = (1,0) - \gamma^0_u(0)
  %\\
  f_u(0,t) = 0\quad\text{and}\quad
  f(1,t) = 0
  \,.
  \end{align*}
  Although quite messy, \eqref{quasi} is a uniformly parabolic quasilinear
  initial boundary value problem, and finding a solution for a short time with smooth initial data
  with an arbitrary positive smooth function of time in place of $L$ follows
  by using any of the aforementioned references: \cite{HP,LSU,Lu,JJS}.
  The conclusion for the particular choice of function
  \[
  L[f] = \int_0^1 \sqrt{|f_u|^2 + (1-k_0f)^2|\gamma^0_u|^2}\,du
  \]
  follows by using a fixed point argument exactly as in e.g. McCoy \cite{mccoy} (see also \cite{gwthesis}).

  Once we have solved for $f$, then we can use the formula \eqref{formulaforg} to reconstruct a solution to \eqref{eqpar}.
  \end{proof}

\section{Commutators, further boundary conditions, and evolving length}
  \label{sec_commutators_boundaries_length}

  Let us reparametrise the family of curves by its arc-length and denote the
  arc-length parameter with $s$. The family of curves $\gamma(s,t)$ is now
  defined on the domain $[0,L(t)]\times[0,T)$ with satisfying \eqref{eqpar} and boundary conditions
  $\tau(0,t) = e_1$, $\gamma(L(t),t) = (\rho_0,0)$.

  We shall now derive the commutator for the operators $\partial_s$ and $\partial_t$\,.
  \begin{lem}\label{Lem DC commutator}
  Let $\gamma: [0,L(t)]\times[0,T)\to\R^2$ be the solution to \eqref{eqpar} given
  by Theorem \ref{STE} for a given initial curve
  $\gamma_0:[0,L(0)]\rightarrow\R^2$.
  Let
  \[
  z = -\frac{\nu_1\nu_2^2}{L(t)}\,.
  \]
  The differential operators with respect to arc-length and time satisfy
  \[
  \partial_t\partial_s=\partial_s\partial_t + \left(k^2 + kz\right)\partial_s\,.
  \]
  \end{lem}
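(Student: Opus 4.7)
The identity is the standard ``normal-speed commutator'' formula applied to the particular velocity $\partial_t\gamma=(k+z)\nu$ of our flow, so the plan is just to execute that computation carefully and then simplify using the given speed.

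\medskip

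\noindent\textbf{Step 1: Reduce to computing $\partial_t|\gamma_u|$.}
Since $u$ and $t$ are independent, $\partial_t$ and $\partial_u$ commute. Using $\partial_s=|\gamma_u|^{-1}\partial_u$ I will write
\[
\partial_t\partial_s
= \partial_t\bigl(|\gamma_u|^{-1}\partial_u\bigr)
= \partial_t(|\gamma_u|^{-1})\,\partial_u + |\gamma_u|^{-1}\partial_u\partial_t
= \partial_t(|\gamma_u|^{-1})\,\partial_u + \partial_s\partial_t.
\]
So the whole lemma reduces to showing that $\partial_t(|\gamma_u|^{-1})\partial_u = (k^2+kz)\,\partial_s$, i.e.\ $\partial_t(|\gamma_u|^{-1}) = (k^2+kz)|\gamma_u|^{-1}$.

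\medskip

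\noindent\textbf{Step 2: Differentiate $|\gamma_u|$ using that the velocity is purely normal.}
Starting from $|\gamma_u|^2=\langle\gamma_u,\gamma_u\rangle$ and commuting $\partial_t$ with $\partial_u$,
\[
\partial_t|\gamma_u|^2
= 2\,\bigl\langle\gamma_u,\partial_u(\partial_t\gamma)\bigr\rangle
= 2\,\bigl\langle|\gamma_u|\tau,\,\partial_u\bigl((k+z)\nu\bigr)\bigr\rangle.
\]
Expanding $\partial_u((k+z)\nu) = (k+z)_u\,\nu + (k+z)\,\nu_u$, the first term drops because $\tau\perp\nu$. For the second term I use $\partial_u=|\gamma_u|\partial_s$ together with the Frenet--Serret identity $\nu_s=-k\tau$ from \eqref{6bis}, giving $\nu_u=-k|\gamma_u|\tau$ and hence
\[
\partial_t|\gamma_u|^2 = -2k(k+z)\,|\gamma_u|^2.
\]
Therefore $\partial_t|\gamma_u| = -k(k+z)|\gamma_u|$ and $\partial_t(|\gamma_u|^{-1}) = k(k+z)\,|\gamma_u|^{-1}$.

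\medskip

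\noindent\textbf{Step 3: Assemble.}
Substituting into Step 1 yields
\[
\partial_t\partial_s
= \partial_s\partial_t + k(k+z)\,|\gamma_u|^{-1}\partial_u
= \partial_s\partial_t + (k^2+kz)\,\partial_s,
\]
which is the claim. There is no real obstacle here; the only thing to watch is that the boundary condition $\gamma(L(t),t)=(\rho_0,0)$ and the time-dependence of the arclength parameter $s$ never enter the derivation, because $\partial_s$ is rewritten as $|\gamma_u|^{-1}\partial_u$ in terms of the fixed parameter $u$ before any time differentiation is performed. The identity is a pointwise identity of operators on functions defined on the evolving curve, and holds wherever the solution is smooth, which is guaranteed on $[0,1]\times[0,T)$ by Theorem \ref{STE}.
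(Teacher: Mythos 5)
Your proposal is correct and follows essentially the same route as the paper: both compute $\partial_t|\gamma_u|^2 = -2(k^2+kz)|\gamma_u|^2$ from the purely normal velocity $(k+z)\nu$ via the Frenet--Serret relation $\nu_s=-k\tau$, and then differentiate $\partial_s=|\gamma_u|^{-1}\partial_u$ in time to obtain the commutator. The only difference is cosmetic (you expand in $u$-derivatives explicitly, while the paper works with $\IP{(\kappa+Z^\perp)_s}{\gamma_s}$), so nothing further is needed.
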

  \begin{proof}
  With the Frenet-Serret equations, we compute
  \begin{align*}
  \partial_t|\gamma_u|^2 
    &= 2\IP{\gamma_{ut}}{\gamma_u} = 2 |\gamma_u|^2\IP{(\kappa + Z^\perp )_s}{\gamma_s}
    \\&
    = 2 |\gamma_u|^2\left(-k^2 - kz \right).
  \end{align*}
  Hence
  \begin{align*}
  \partial_t\partial_s 
    & = \partial_t\big( |\gamma_u|^{-1}\partial_u\big) = |\gamma_u|^{-1}\partial_u\partial_t - |\gamma_u|^{-2}|\gamma_u|_t\, \partial_u
    \\
    & =\partial_s\partial_t + \left(k^2 + kz \right)\partial_s\,.
  \end{align*}
  \end{proof}

  \begin{lem}\label{Lem tau nu evolution}
  Let $\gamma: [0,L(t)]\times[0,T)\to\R^2$ be the solution to \eqref{eqpar} given
  by Theorem \ref{STE} for a given initial curve
  $\gamma_0:[0,L(0)]\rightarrow\R^2$.
  We have the following evolution equations
  \begin{align*}
  \tau_t &= \big(k_s + z_s \big)\nu
  \,.
    \\
  \nu_t &= -\big(k_s + z_s \big)\tau
  \,.
  \end{align*}
  \end{lem}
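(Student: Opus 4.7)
The plan is to compute $\tau_t$ directly from the identity $\tau = \gamma_s$ by swapping the order of the time and arc-length derivatives via the commutator of Lemma \ref{Lem DC commutator}, and then to extract $\nu_t$ from $\tau_t$ using the algebraic constraints $|\tau|=|\nu|=1$ and $\IP{\tau}{\nu}=0$. This is the standard normal-flow computation, but with the scalar normal speed $k+z$ in place of the usual $k$; the only thing that needs attention is the bookkeeping of the tangential terms contributed by the commutator.

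Concretely, I first write
\[
\tau_t = \partial_t\partial_s\gamma = \partial_s\partial_t\gamma + (k^2+kz)\partial_s\gamma = \partial_s\bigl((k+z)\nu\bigr) + (k^2+kz)\tau,
\]
using the flow equation $\partial_t\gamma = (k+z)\nu$ from \eqref{eqpar} together with the commutator from Lemma \ref{Lem DC commutator}. Expanding the $s$-derivative with the Frenet--Serret relation $\nu_s = -k\tau$ gives
\[
\partial_s\bigl((k+z)\nu\bigr) = (k_s+z_s)\nu + (k+z)\nu_s = (k_s+z_s)\nu - (k^2+kz)\tau,
\]
and the two tangential contributions cancel exactly, leaving $\tau_t = (k_s+z_s)\nu$, as claimed. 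This cancellation is in fact a reflection of the well-known fact that tangential reparametrisations do not alter the geometric content of a normal flow, and it serves as a sanity check on the sign in the commutator lemma.

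For $\nu_t$ I would avoid a second direct calculation and instead argue from the pointwise orthonormality of the frame: since $|\nu|^2 \equiv 1$, we have $\IP{\nu_t}{\nu}=0$, so $\nu_t$ is parallel to $\tau$. Differentiating $\IP{\tau}{\nu}=0$ in $t$ then gives $\IP{\tau}{\nu_t} = -\IP{\tau_t}{\nu} = -(k_s+z_s)$, whence $\nu_t = -(k_s+z_s)\tau$. I do not anticipate any real obstacle; the main thing to get right is the sign in $\partial_s\nu = -k\tau$ combined with our clockwise parametrisation convention, since an inconsistency there would propagate through every subsequent evolution equation.
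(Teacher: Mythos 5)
Your proposal is correct and follows essentially the same route as the paper: apply the commutator of Lemma \ref{Lem DC commutator} to $\tau=\gamma_s$, substitute the flow equation $\partial_t\gamma=(k+z)\nu$, use $\nu_s=-k\tau$ so the tangential terms cancel, and then obtain $\nu_t$ by differentiating the orthonormality relations. The only cosmetic difference is that the paper justifies $\nu_t\parallel\tau$ by noting the curves are planar, whereas you invoke $|\nu|\equiv 1$; both are equivalent here.
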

  \begin{proof}
  Applying the previous lemma to the flow equation, we calculate
  \begin{align*}
  \tau_t &= (\gamma_s)_t = \gamma_{ts} + \left(k^2 + kz \right)\tau
    \\&= k_s\nu - k^2\tau + z_s\nu - kz\tau + \left(k^2 + kz \right)\tau
  %	\\&= \big(k_s + z_s  \big)\nu
    \\&= \big(k_s + z_s  \big)\nu
  \,.
  \end{align*} 
  As $\gamma(s,t)$ are plane curves, we have $\nu_t = \lambda\tau$ for a scalar $\lambda$. Thus by differentiating $\IP{\tau}{\nu}=0$, we immediately obtain 
  \begin{align*}
  \nu_t 
  &= -\big(k_s + z_s \big)\tau
  \,.
  \end{align*}
  \end{proof}

  For convenience, we explicitly state the expression for derivatives of the scalar zipping factor $z$.

  %
  % z = -\nu_1\nu_2^2/L
  %
  % \tau_1 = -\nu_2
  % \tau_2 = \nu_1
  %

  \begin{lem}
  Let $\gamma: [0,L(t)]\times[0,T)\to\R^2$ be the solution to \eqref{eqpar} given
  by Theorem \ref{STE} for a given initial curve
  $\gamma_0:[0,L(0)]\rightarrow\R^2$.
  We have
  \begin{align*}
  z_s &= \frac{k}L\Big(
    -3\nu_2^3 + 2\nu_2
    \Big)
  \\
  z_{ss} &= \frac{k_s}L\Big(
    -3\nu_2^3 + 2\nu_2
    \Big)
   + \frac{k^2}L\Big(
    -9\nu_1^3 + 7\nu_1
    \Big)
  \,.
  \end{align*}
  \label{Z derivatives}
  \end{lem}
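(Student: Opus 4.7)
The plan is to carry out two direct differentiations of $z = -\nu_1\nu_2^2/L(t)$ in the arc-length variable. Since $L$ depends only on $t$, it passes through $\partial_s$ as a constant, so
\[
z_s = -L^{-1}(\nu_1\nu_2^2)_s,\qquad z_{ss} = -L^{-1}(\nu_1\nu_2^2)_{ss},
\]
and the entire task reduces to evaluating derivatives of a polynomial in $\nu_1,\nu_2$ using Frenet--Serret plus the identity $\nu_1^2+\nu_2^2=1$.

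First I would record the componentwise Frenet--Serret relations consistent with the clockwise parametrisation fixed in Section~\ref{sec_short_time_param}. The formulas $\tau=|\gamma_u|^{-1}(x_u,y_u)$ and $\nu=|\gamma_u|^{-1}(y_u,-x_u)$ give $\tau_1=-\nu_2$ and $\tau_2=\nu_1$, so that $\nu_s=-k\tau$ becomes
\[
(\nu_1)_s = k\nu_2,\qquad (\nu_2)_s = -k\nu_1.
\]
Getting the signs right here is the only point that really requires care; once these are fixed, the rest is algebra.

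For the first derivative I would apply the product rule: $(\nu_1\nu_2^2)_s = (\nu_1)_s\nu_2^2 + 2\nu_1\nu_2(\nu_2)_s = k\nu_2^3 - 2k\nu_1^2\nu_2 = k\nu_2(\nu_2^2 - 2\nu_1^2)$. Eliminating $\nu_1^2$ via $\nu_1^2 = 1 - \nu_2^2$ collapses this to $k(3\nu_2^3-2\nu_2)$, which after multiplying by $-1/L$ yields the claimed expression for $z_s$.

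For the second derivative I would differentiate the formula already derived for $z_s$, treating the polynomial $-3\nu_2^3+2\nu_2$ as depending on $s$ through $\nu_2$. A $k_s$ term appears with exactly the same polynomial factor, while the remaining $k$-term picks up $(\nu_2)_s = -k\nu_1$ and produces $k^2\nu_1(9\nu_2^2-2)$. Substituting $\nu_2^2 = 1-\nu_1^2$ then converts this to $k^2(-9\nu_1^3+7\nu_1)$, giving the stated formula for $z_{ss}$. There is no substantive obstacle in the argument; the entire lemma is a bookkeeping exercise whose only subtlety is the orientation convention needed to sign the identities $(\nu_1)_s=k\nu_2$, $(\nu_2)_s=-k\nu_1$ correctly.
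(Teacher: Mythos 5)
Your computation is correct and is exactly the routine verification the paper leaves implicit (the lemma is stated without proof): you use the paper's orientation conventions correctly, since $\nu_1=\tau_2$, $\nu_2=-\tau_1$ give $(\nu_1)_s=k\nu_2$, $(\nu_2)_s=-k\nu_1$, and these are consistent with the paper's later uses (e.g.\ $z_s(0,t)=k(0,t)/L$ in Lemma \ref{LMbcs} and $k\nu_1\nu_2^2=k\tau_2\nu_2^2$ in Lemma \ref{lmlengthevo}). The algebra with $\nu_1^2+\nu_2^2=1$ reproduces both stated formulas, so nothing further is needed.
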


  The boundary conditions give rise to the following.

  \begin{lem}
  \label{LMbcs}
  Let $\gamma: [0,L(t)]\times[0,T)\to\R^2$ be the solution to \eqref{eqpar} given
  by Theorem \ref{STE} for a given initial curve
  $\gamma_0:[0,L(0)]\rightarrow\R^2$.
  We have
  \[
  k_s(0,t) = -\frac{k(0,t)}{L(t)}\,,\quad\text{and}\quad
  k(L(t),t) = \frac{\nu_1(L(t),t)\nu_2^2(L(t),t)}{L(t)}
  \,.
  \]
  \end{lem}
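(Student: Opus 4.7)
The plan is to differentiate each boundary condition in time and extract the claimed identity from the evolution equations already in hand; no new estimates are required.

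For the right endpoint condition, I would work in the fixed parametrisation $u\in[0,1]$, where the Dirichlet data $\gamma(1,t)=(\rho_0,0)$ is manifestly constant in $t$. Differentiating gives $\partial_t\gamma(1,t)=0$. Since the flow equation \eqref{eqpar} is purely normal, $\partial_t\gamma=(k+z)\nu$, and $\nu$ is a unit vector, this forces $(k+z)(L(t),t)=0$. Substituting the definition $z=-\nu_1\nu_2^2/L$ yields the desired formula $k(L(t),t)=\nu_1(L(t),t)\nu_2^2(L(t),t)/L(t)$.

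For the left endpoint, I would differentiate the tangent condition $\tau(0,t)=e_1$ in $t$ to obtain $\tau_t(0,t)=0$. By Lemma \ref{Lem tau nu evolution}, $\tau_t=(k_s+z_s)\nu$, and since $\nu$ is a nonzero unit vector we must have $(k_s+z_s)(0,t)=0$. The clockwise orientation convention $\nu=|\gamma_u|^{-1}(y_u,-x_u)$ implies that at $u=0$, where $\tau=e_1$, one has $\nu=(0,-1)$, hence $\nu_2(0,t)=-1$. Plugging this into the expression for $z_s$ supplied by Lemma \ref{Z derivatives} gives
\[
z_s(0,t)=\frac{k(0,t)}{L(t)}\bigl(-3(-1)^3+2(-1)\bigr)=\frac{k(0,t)}{L(t)},
\]
and therefore $k_s(0,t)=-z_s(0,t)=-k(0,t)/L(t)$.

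The only conceptual subtlety is the time derivative at the moving spatial endpoint $s=L(t)$; this is handled by reverting to the fixed parameter $u=1$ before differentiating, so that the argument is not moving. Beyond this, both identities are purely algebraic consequences of the boundary data combined with the already-established expressions for $\tau_t$ and $z_s$, and I do not anticipate any real obstacle.
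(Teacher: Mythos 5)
Your proposal is correct and follows essentially the same route as the paper: differentiate the Dirichlet condition in time (equivalently $\gamma_t(L(t),t)=0$) to get $k+z=0$ at the right endpoint, and differentiate $\tau(0,t)=e_1$ together with Lemma \ref{Lem tau nu evolution} to get $k_s(0,t)=-z_s(0,t)$, then evaluate $z_s$ via Lemma \ref{Z derivatives} with $\nu(0,t)=(0,-1)$. Your explicit computation of $z_s(0,t)=k(0,t)/L(t)$ and the remark about fixing the parameter $u=1$ before differentiating merely spell out details the paper leaves implicit.
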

  \begin{proof}
  Each relation follows from differentiating the boundary conditions in time.
  For the boundary point at zero, we have (Lemma \ref{Lem tau nu evolution})
  \[
  0 = \tau_t(0,t) = (k_s(0,t) + z_s(0,t))\nu(0,t)
  \]
  which implies
  \[
    k_s(0,t) = -z_s(0,t)\,.
  \]
  We may further simplify this by noting that
  \[
  %z_s = \frac{k}L\Big(
  %	\tau_1\nu_2^2 + 2\tau_2\nu_1\nu_2
  %	\Big)
  %
  % \tau(0,t) = (1,0)
  % \nu(0,t) = (0,-1)
  %
  z_s(0,t) = \frac{k(0,t)}{L(t)}
  \]
  giving the first relation.
  For the second, the Dirichlet condition gives $\gamma_t(L(t),t) = 0$ which is
  \[
  0 = k(L(t),t) + z(L(t),t)
  \,,
  \]
  as required.
  \end{proof}

  We also calculate the evolution of length.

  \begin{lem}
    Let $\gamma: [0,L(t)]\times[0,T)\to\R^2$ be the solution to \eqref{eqpar} given
    by Theorem \ref{STE} for a given initial curve
    $\gamma_0:[0,L(0)]\rightarrow\R^2$.
    We have
    \[
    L'(t) 	\le -\int_\gamma k^2ds
    \]
    and if $\gamma_2(0,t) > 0$ for $t\in[0,T^*)$, then
    \[
    L'(t)	\le -\frac{4\rho_0\gamma_2(0,t)^2}{(\rho^2_0 + \gamma_2(0,t)^2)^2}
      \,,\quad\text{ for $t\in[0,T^*)$}
    \,.
    \]
    %In particular, length is monotone decreasing while $\gamma_2(0,t) > 0$.
    \label{lmlengthevo}
  \end{lem}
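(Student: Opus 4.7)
The plan is to differentiate $L(t)$ directly, split the result into a curvature-squared term and a zipping term, show the zipping contribution has favourable sign via an integration by parts, and then combine both negative contributions with a geometrically chosen test vector to extract the explicit rate.

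Starting from $\partial_t|\gamma_u| = -|\gamma_u|(k^2+kz)$, which was derived in the proof of Lemma~\ref{Lem DC commutator}, integration in $u$ gives
\[
  L'(t) = -\int_\gamma k^2\,ds - \int_\gamma kz\,ds.
\]
For the first inequality it suffices to show $\int_\gamma kz\,ds \ge 0$. Since $z = -\nu_1\nu_2^2/L$ and $k\nu_1 = (\tau_1)_s$, together with $(\nu_2)_s = -k\tau_2 = -k\nu_1$ (using $\tau_2 = \nu_1$) and $\tau_1 = -\nu_2$, two integrations by parts yield
\[
  3\int_\gamma k\nu_1\nu_2^2\,ds = \bigl[\tau_1\nu_2^2\bigr]_0^{L(t)}.
\]
At $s=0$ the Neumann condition gives $\tau = e_1$ and $\nu = -e_2$, so $\tau_1\nu_2^2 = 1$; at $s = L(t)$, the orthogonality $\tau_1 = -\nu_2$ gives $\tau_1\nu_2^2 = -\nu_2(L)^3$. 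Since $|\nu_2|\le 1$ the bracket is non-positive, so $\int_\gamma kz\,ds = (1+\nu_2(L)^3)/(3L) \ge 0$ and the first inequality follows.

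For the second inequality I would first observe that $\gamma_1(0,t)$ is conserved along the flow: at $s=0$, $\nu(0,t) = -e_2$, so the velocity $(k+z)\nu$ has no $e_1$-component. With the reflection-symmetric initial placement $\gamma_1(0,0) = 0$, the free end is $\gamma(0,t) = (0,h(t))$ with $h = \gamma_2(0,t) > 0$, and $\gamma$ joins $(0,h)$ to $(\rho_0,0)$ with horizontal tangent at $s=0$. The clean rate $4\rho_0 h^2/(\rho_0^2+h^2)^2$ does not follow from $\int k^2\,ds$ alone — direct computation on simple profiles like $g(x)=h\cos(\pi x/(2\rho_0))$ shows the curvature integral can fall below the target, while the $\int kz$ contribution more than compensates. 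I would therefore retain both terms, combining a Cauchy--Schwarz lower bound on $\int k^2\,ds$ with the test vector $w = \nu_2(\rho_0 - x) + \nu_1 y$ (for which integration by parts using $k\nu_1 = x_{ss}$, $k\nu_2 = y_{ss}$ and the boundary data gives the clean identity $\int_\gamma kw\,ds = -h$), the explicit expression $\int kz\,ds = (1+\nu_2(L)^3)/(3L)$, and the chord constraint $\int\tau\,ds = (\rho_0,-h)$.

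The principal obstacle is the final combination: one must bound $\int_\gamma w^2\,ds$ and balance it against the boundary contribution $(1+\nu_2(L)^3)/(3L)$ so that the joint estimate depends only on $\rho_0$ and $h$, not on $L(t)$. A naive pointwise estimate only yields $w^2 \le (\rho_0-x)^2+y^2$, which integrates to an $L$-dependent quantity; removing this dependence requires exploiting structural properties preserved by the flow (graphicality, or a uniform height bound $\gamma_2 \le h$), tying the present lemma intimately to the estimates developed in Section~4.
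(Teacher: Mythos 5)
Your argument for the first estimate is correct and is essentially the paper's own: both you and the paper write $L'(t) = -\int_\gamma k^2\,ds - \int_\gamma kz\,ds$ and evaluate the zipping contribution exactly by recognising $k\nu_1\nu_2^2 = k\tau_2\nu_2^2 = -(\nu_2)_s\nu_2^2 = -\tfrac13(\nu_2^3)_s$, so that $\int_\gamma kz\,ds = \tfrac{1}{3L}\bigl(1+\nu_2^3(L(t),t)\bigr) \ge 0$ because $\nu_2(0,t) = -1$ is the extremal value of $\nu_2$; your boundary bookkeeping with $\tau_1=-\nu_2$ agrees with this. Your side remark that $\gamma_1(0,t)$ is conserved is also correct and consistent with the paper's implicit placement of the free endpoint on the $y$-axis.

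The second estimate is where your proposal has a genuine gap: you do not prove it. The paper's route is short --- it discards the zipping term via the first inequality and bounds $\int_\gamma k^2\,ds$ from below by the energy of the circular arc joining $(0,\gamma_2(0,t))$ to $(\rho_0,0)$ with horizontal tangent at the left end, of radius $r=\tfrac{\rho_0^2+\gamma_2(0,t)^2}{2\gamma_2(0,t)}$ and energy $\tfrac1r\arcsin(\rho_0/r)$, finishing with $x\arcsin x\ge x^2$; no test vector, chord constraint, or Cauchy--Schwarz balancing appears. You instead keep both terms, assemble several ingredients, and then state explicitly that the ``final combination'' is an unresolved obstacle (the $L(t)$-dependence of $\int_\gamma w^2\,ds$), deferring to graphicality or height bounds that the lemma, as stated, does not assume. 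A sketch that ends by naming its own missing step is not a proof, so as submitted the second inequality is unestablished. To be fair, your scepticism about the purely $k^2$-based route is not frivolous: a circular arc does not satisfy the Euler--Lagrange equation $2k_{ss}+k^3=0$ of the length-free bending energy, and your cosine profile gives $\int_\gamma k^2\,ds \approx \pi^4 h^2/(32\rho_0^3) < 4h^2/\rho_0^3$ for small heights $h$, so the paper's one-line comparison-curve claim needs more justification (for instance a restriction to the class of curves actually produced by the flow) than it receives. But identifying a weakness in the simpler route does not discharge the obligation to close your own: to repair the proposal you must either justify a comparison argument of the paper's type or carry out your combined curvature-plus-zipping estimate uniformly in $L(t)$ using only the hypotheses of the lemma.
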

  \begin{proof}
    The arclength element evolves by
    \[
      ds_t = (-k^2-kz)\,ds\,.
    \]
    Therefore
    \begin{align}
      \label{eq_lengthevo}
      L'(t) &= \int_\gamma -k^2ds + \frac1L \int_\gamma k\nu_1\nu_2^2ds
      \,.
    \end{align}
    Now, note that
    \begin{align*}
    \frac1L \int_\gamma k\nu_1\nu_2^2ds
     &= \frac1L \int_\gamma k\tau_2\nu_2^2ds
    \\
     &= -\frac1L \int_\gamma (\nu_2)_s\nu_2^2ds
    \\
     &= -\frac1{3L} \int_\gamma (\nu_2^3)_s\,ds
    \\
     &= -\frac1{3L} \Big[
      (\nu_2^3)(L(t),t)
      - (\nu_2^3)(0,t)
        \Big]
    \,.
    \end{align*}
    Now the second component of the unit normal takes the value $-1$ at zero, and this is the smallest possible value it can take. This means that the term in the brackets is non-negative, which yields finally the estimate
    \[
    \frac1L \int_\gamma k\nu_1\nu_2^2ds \le 0
    \,.
    \]
    We therefore have
    \begin{equation}
    \label{EQlengthevo}
    L'(t) \le -\int_\gamma k^2ds
    \,.
    \end{equation}
    If $\gamma(0,t) > 0$, then the curve with the smallest $||k||_2^2$ that is smooth and connects the boundary points together is a circular arc with radius
    \[
      r = \frac{\rho_0^2 + \gamma_2(0,t)^2}{2\gamma_2(0,t)}
    \,.
    \]
    The arclength of the circular arc is $r \arcsin( \rho_0/r )$, so we find the estimate
    \[
    \int_\gamma k^2\,ds \ge \frac{1}{r^2}r \arcsin( \rho_0/r )
      = \frac1{\rho_0}\frac{\rho_0}{r} \arcsin(  \rho_0/r )
    \,.
    \]
    Clearly (from for instance the Taylor series for $x\mapsto x\arcsin(x)$) we have
    \[
    \int_\gamma k^2\,ds
      \ge \frac1{\rho_0}\frac{\rho^2_0}{r^2}
      = \frac{4\rho_0\gamma_2(0,t)^2}{(\rho^2_0 + \gamma_2(0,t)^2)^2}
    \,.
    \]
    Combining this estimate with \eqref{EQlengthevo} finishes the proof.
  \end{proof}

  %%
  %% length is at least \sqrt{\rho_0^2 + \gamma_2^2(0,t)}
  %%
  %% L^2 \ge \rho_0^2 + \gamma_2^2(0,t) \ge \gamma_2^2(0,t)
  %%

\section{Graphical data}
  \label{sec_graphical}

  In the case where the initial data is graphical, which is the physically interesting setting, we are able to obtain finer results.
  First, the evolution equation preserves graphicality, as per the following result that uses the maximum principle.

  \begin{thm}[Graph preservation]
    Let $\gamma: [0,L(t)]\times[0,T)\to\R^2$ be the solution to \eqref{eqpar} given
    by Theorem \ref{STE} for a given initial curve
    $\gamma_0:[0,L(0)]\rightarrow\R^2$.
    For a fixed vector $V\in\R^2$, set
    \[
    G_{V}(s,t) := \IP{\nu(s,t)}{V}
    \,.
    \]
    If $G_{-e_2}(s,0) > 0$ then $G_{-e_2}(s,t) > 0$.
    \label{graphpreservation}
  \end{thm}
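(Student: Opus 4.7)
The plan is to derive a parabolic evolution equation for $G := G_{-e_2}$ and then apply the strong maximum principle, handling the pinned right endpoint carefully via Hopf's lemma.

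First I would compute the evolution. Using Lemma \ref{Lem tau nu evolution}, $G_t = -\IP{\nu_t}{e_2} = (k_s + z_s)\IP{\tau}{e_2} = (k_s+z_s)\tau_2$. The clockwise orientation convention gives $\tau_2 = \nu_1$ and $\nu_{2,s} = -k\tau_2$, so
\[
G_s = k\tau_2,\qquad G_{ss} = k_s\tau_2 - k^2 G.
\]
Eliminating $k_s\tau_2$ and using Lemma \ref{Z derivatives} to write $z_s\tau_2 = \tfrac{G_s}{L}(3G^3 - 2G)$ (after noting $-3\nu_2^3+2\nu_2=3G^3-2G$), I obtain the quasilinear parabolic equation
\[
G_t = G_{ss} + \frac{1}{L}(3G^3 - 2G)\,G_s + k^2 G.
\]

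Next I would pin down the boundary data. At the reflection endpoint $\tau(0,t)=e_1$ forces $\nu(0,t)=-e_2$ and hence $G(0,t)\equiv 1$, a strictly positive Dirichlet condition. At the pinned endpoint, Lemma \ref{LMbcs} gives $k(L(t),t)=\nu_1\nu_2^2/L$; substituting into $G_s = k\nu_1$ yields the nonlinear Robin-type identity
\[
G_s(L(t),t) \;=\; \frac{\nu_1^2\nu_2^2}{L} \;=\; \frac{\nu_1^2}{L}\,G^2(L(t),t) \;\ge\; 0.
\]

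With these ingredients the argument is classical; I would transfer to the fixed parametrisation $u\in[0,1]$ so that the parabolic cylinder has smooth boundary and the coefficients $k$, $L$, and $(3G^3-2G)/L$ are bounded on any $[0,T']$ strictly inside the interval of existence. Let $t^*$ be the first time $G$ fails to be strictly positive. On $[0,1]\times[0,t^*]$ we have $G\ge 0$, so $k^2 G\ge 0$ and $G$ is a supersolution of the linear operator $\partial_t - \partial_{ss} - A(u,t)\partial_s$. An interior zero at $t^*$ would force $G\equiv 0$ on the parabolic component by the strong minimum principle, contradicting $G(0,t)=1$. The left endpoint is excluded by the Dirichlet condition. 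The only remaining possibility is a zero at $u=1$, where Hopf's lemma gives the strict inequality $G_s(L(t^*),t^*) < 0$ for the outward normal derivative, contradicting the Robin identity $G_s = (\nu_1^2/L) G^2 \ge 0$ derived above.

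The main obstacle is exactly this free right endpoint: nothing geometrically prevents $\nu$ from rotating to horizontal at $(\rho_0,0)$. The crux of the proof is the algebraic observation that the Dirichlet condition on $\gamma$ self-balances into a nonnegative one-sided condition on $G_s$ at $u=1$, which is precisely incompatible with the strict sign given by parabolic Hopf. Verifying the hypotheses of Hopf's lemma (in particular, the interior sphere condition in the fixed $(u,t)$-cylinder and the sign $c\le 0$ of the zero-order coefficient $-k^2$) is routine once the boundary identity is in place.
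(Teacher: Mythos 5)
Your proposal is correct and follows essentially the same route as the paper: derive the parabolic equation for $G_{-e_2}$ from Lemmas \ref{Lem tau nu evolution} and \ref{Z derivatives}, note $G_{-e_2}(0,t)=1$ at the reflection end, convert the Dirichlet condition via Lemma \ref{LMbcs} into the sign condition $G_s(L(t),t)=\nu_1^2\nu_2^2/L\ge 0$, and rule out a first boundary zero by the minimum principle together with the parabolic Hopf lemma. Your extra care (working in the fixed parametrisation and treating the zero-order term $k^2G$ via the first-touching-time argument) only tightens steps the paper leaves implicit.
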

  \begin{proof}
    We compute
    \[
    \partial_s G_V = \IP{\nu_s}{V} = -\IP{k\tau}{V},
    \]
    and
    \[
    \partial_{ss}G_V = -k_s\IP{\tau}{V}-k^2\IP{\nu}{V}.
    \]
    Recalling Lemma~\ref{Lem DC commutator} and Lemma~\ref{Lem tau nu evolution}, we see that
    \begin{align*}
    \partial_t G_V 
      &= \IP{\nu_t}{V}
      = -\big(k_s + z_s\big)\IP{\tau}{V}
      \\
      &= -k_s\IP{\tau}{V}
       - \IP{\tau}{V} \frac{k}L\Big( -3\nu_2^3 + 2\nu_2 \Big)
      \\
      &= -k_s\IP{\tau}{V} + \partial_sG_V \frac{1}L\Big( -3\nu_2^3 + 2\nu_2 \Big)
    \,.
    \end{align*}
    Thus
    \begin{align*}
    \partial_tG_V 
      &= \partial_{ss}^2G_V 
        + k^2G_V
        + \partial_sG_V \frac{1}L\Big( -3\nu_2^3 + 2\nu_2 \Big)
    \,.
    \end{align*}
    By defining the operator $\L$ to be
    \begin{align*}
    \L\phi = \left(\partial_t - \frac{1}L\Big( -3\nu_2^3 + 2\nu_2 \Big)\partial_s - \partial_{ss}\right)\phi,
    \end{align*}
    we can write
    \begin{align*}
    \L G_V = k^2G_V\,.
    \end{align*}
    Hence, by the minimum principle,
    \[
    \min\limits_{[0,L(t)]\times[0,T)} G_{V}(s,t) \ge \min\big\{\min G_{V}(s,0), \min G_{V}(0,t), \min G_{V}(L(t),t)\big\}.
    \]
    Now let us consider $V = -e_2$.
    First, $G_{-e_2}(0,t) = 1$, which is the largest value $G$ can possibly take.

    On the Dirichlet boundary, we apply the Hopf Lemma.
    %%
    %% \tau_2 = \nu_1
    %% \tau_1 = -\nu_2
    %% Lk = \nu_1 \nu_2^2 --> k(L) < 0
    %%
    We find
    \[
    0 > (\partial_s G_{-e_2})(L(t),t)
      = k(L(t),t)\tau_2(L(t),t)
    \,.
    \]
    From Lemma \ref{LMbcs},
    \[
    k(L(t),t)\tau_2(L(t),t) = \frac{\nu_1\tau_2\nu_2^2}{L(t)} = \frac{\nu_1^2\nu_2^2}{L(t)}
    \]
    and so
    \[
    0 > (\partial_s G_{-e_2})(L(t),t)
      = k(L(t),t)\tau_2(L(t),t)
     \ge 0\,,
    \]
    a contradiction.
    We conclude that no new minima for $G_{-e_2}$ can be attained on either boundary, and so
    \[
    \min\limits_{[0,L(t)]\times[0,T)} G_{V}(s,t) \ge \min\big\{\min G_{V}(s,0)\big\}\,,
    \]
    implying the result.
  \end{proof}

  \begin{rmk}
    Along an $e_2$-graphical solution, we have $\tau_2(L(t),t) \le 0$ and so the
    curvature at the right-hand boundary $k(L(t),t)$ is \emph{non-positive}.
  \end{rmk}

  %\begin{prop}[Curvature evolution]
  %Let $\gamma: [0,L(t)]\times[0,T)\to\R^2$ be the solution to \eqref{eqpar} given
  %by Theorem \ref{STE} for a given initial curve
  %$\gamma_0:[0,L(0)]\rightarrow\R^2$.
  %The evolution of curvature is given by
  %\begin{equation*}
  %k_t = k_{ss} + k^3 + k(-4\nu_2^3 + 2\nu_2 - k\nu_1\nu_2^2)/L
  %\,.
  %\end{equation*}
  %\label{curvatureevolution}
  %\end{prop}
  %\begin{proof}
  %The commutator relations, evolution of the tangent vector, and derivative of $z$, (Lemmata \ref{Lem
  %DC commutator}, \ref{Lem tau nu evolution} and \ref{Z derivatives}) combine to yield
  %\begin{align*}
  %kk_t 
  %	&= \IP{\partial_t\partial_s\tau}{\partial_s\tau}
  %	 = \IP{\partial_s\partial_t\tau + \left(k^2 + kz\right)\partial_s\tau}{\partial_s\tau} 
  %	\\
  %	&= \IP{\partial_s\left(k_s + z_s\right)\nu}{k\nu} + k^2\left(k^2 + kz\right) 
  %	\\
  %	&= kk_{ss} + kz_s + k^4 + k^3z
  %	\\
  %	&= kk_{ss} + k^2(-4\nu_2^3 + 2\nu_2)/L + k^4 + k^3z
  %\,.
  %\end{align*}
  %%%
  %%% z = -\nu_1\nu_2^2/L = -\nu_1/L + \nu_1^3/L
  %%%
  %The conclusion then follows.
  %\end{proof}

  \begin{lem}
  Let $\gamma: [0,L(t)]\times[0,T)\to\R^2$ be the solution to \eqref{eqpar} given
  by Theorem \ref{STE} for a given initial curve
  $\gamma_0:[0,L(0)]\rightarrow\R^2$.
  Suppose
  \[
  \gamma_2(s,0) > 0\,,\quad\text{for all}\ s\in[0,L(0))
  \,.
  \]
  Then
  \[
  \gamma_2(s,t) > 0\,,\quad\text{for all}\ (s,t)\in[0,L(t))\times[0,T)
  \,.
  \]
  \label{DC_1st quadrant graph}
  \end{lem}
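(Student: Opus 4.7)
The strategy is to show that $\gamma_2$ itself satisfies a linear, uniformly parabolic scalar equation to which the standard weak/strong minimum principles and Hopf lemma apply. Starting from the flow $\partial_t\gamma=(k+z)\nu$, and using the Frenet--Serret identity $\gamma_{ss}=k\nu$ (so that $k\nu_2=\partial_{ss}\gamma_2$) together with the relation $\nu_1=\tau_2=\partial_s\gamma_2$, I compute
\[
  \partial_t\gamma_2 \;=\; (k+z)\nu_2 \;=\; \partial_{ss}\gamma_2 \;-\; \frac{\nu_2^3}{L}\,\partial_s\gamma_2.
\]
This is a linear PDE for $\gamma_2$ with drift coefficient $-\nu_2^3/L$, which is bounded because $|\nu_2|\le 1$ and $L>0$ on any $[0,T']\subset[0,T)$ by Theorem \ref{STE}.

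Next, I would identify the parabolic boundary data for $\gamma_2$. At $u=0$ the boundary condition $\tau(0,t)=e_1$ gives the Neumann identity $\partial_s\gamma_2(0,t)=\tau_2(0,t)=0$; at $u=1$ the Dirichlet condition gives $\gamma_2(L(t),t)=0$; and by hypothesis $\gamma_2(s,0)>0$ on $[0,L(0))$ with $\gamma_2(L(0),0)=0$. The weak minimum principle then immediately yields $\gamma_2\ge 0$ on $[0,L(t)]\times[0,T)$.

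To upgrade to strict positivity on $[0,L(t))$, I would rule out that $\gamma_2$ attains the value $0$ at an interior point or at $u=0$ for some $t>0$. An interior zero at $(s_0,t_0)$ with $0<s_0<L(t_0)$ is forbidden by the strong minimum principle, which would force $\gamma_2\equiv 0$ on the slab $\{t\le t_0\}$, contradicting the initial condition. A zero at $s=0$ for some $t_0>0$ is excluded by the Hopf lemma: it would require the outward normal derivative $-\partial_s\gamma_2(0,t_0)$ to be strictly negative, contradicting the Neumann identity $\partial_s\gamma_2(0,t_0)=0$ above.

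The main obstacle is the time-dependent spatial domain $[0,L(t)]$, which makes direct appeals to the maximum principle slightly delicate. I would overcome this in the standard way by pulling the equation back to the fixed parameter domain $[0,1]\times[0,T')$, where $\partial_{ss}=|\gamma_u|^{-2}\partial_{uu}-|\gamma_u|^{-3}|\gamma_u|_u\,\partial_u$, so that $\gamma_2$ satisfies a genuine linear uniformly parabolic equation with smooth, bounded coefficients (the smoothness and uniform bounds on $|\gamma_u|$ coming from Theorem \ref{STE}). All the minimum-principle and Hopf-lemma invocations then become standard textbook applications.
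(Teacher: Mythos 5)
Your proposal is correct and follows essentially the same route as the paper: you derive the identical scalar equation $\left(\partial_t+\frac{\nu_2^3}{L}\partial_s-\partial_{ss}\right)\gamma_2=0$, apply the minimum principle with the Dirichlet/Neumann boundary data, and exclude a minimum at $s=0$ via the Hopf lemma using $\partial_s\gamma_2(0,t)=\tau_2(0,t)=0$. The only differences are that you spell out the strong-minimum-principle step for interior strict positivity and the pull-back to the fixed parameter domain, details the paper leaves implicit.
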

  \begin{proof}
  We first prove the inequality holds in the interior. From \eqref{EQnv} we calculate
  \begin{align*}
  \left(\partial_t-\partial_{ss}\right)\gamma_2 = -\frac{\nu_1\nu_2^3}{L} = -\frac{\tau_2\nu_2^3}{L} = -\frac{\nu_2^3}{L}\partial_s\gamma_2
  \,.
  \end{align*}
  That is,
  \[
  \left(\partial_t+\frac{\nu_2^3}{L}\partial_s-\partial_{ss}\right)\gamma_2 = 0
  \,.
  \]
  The maximum and minimum principle apply to this operator and therefore also to $\gamma_2$.
  In particular, we have
  \[
  \min\limits_{[0,L(t)]\times[0,T)} \gamma_2(s,t) \ge \min\big\{\min \gamma_2(s,0), \min \gamma_2(0,t), \min \gamma_2(L(t),t)\big\}.
  \]
  If there is a new minimum at the left hand boundary point, the Hopf Lemma implies
  \[
  0 > -(\partial_s\gamma_2)(0,t) = -\tau_2(0,t) = 0
  \]
  which is a contradiction.
  Therefore the minimum of $\gamma_2$ (under the given initial condition) is
  attained at the Dirichlet boundary point, and furthermore, there are no new
  local minima attained by $\gamma_2$ on the interior or at the Neumann boundary.
  \end{proof}

  \begin{rmk}
  If the initial curve has only one (interior) inflection point, then the
  Sturmian theorem can be used to preserve $k(0,t) > 0$ and $k(L(t),t) < 0$ for
  $t\in[0,T)$.
  Then, because
  \[
  (\partial_t\gamma_2)(0,t)
    = -\frac{\nu_2^3}{L}\tau_2 + k\nu_2 = -k(0,t)
  \]
  we have strict monotonicity of the height at the Neumann boundary.
  \end{rmk}

  \begin{prop}[Enclosed area decay]
  Let $\gamma: [0,L(t)]\times[0,T)\to\R^2$ be the solution to \eqref{eqpar} given
  by Theorem \ref{STE} for a given initial curve
  $\gamma_0:[0,L(0)]\rightarrow\R^2$.
  Suppose $\gamma_0$ is $e_2$-graphical with
  \[
    G_{-e_2}(s,0) \ge C_G
  \]
  where $C_G \in (0,1)$.
  Assume further that $\gamma_2(s,0) \le \gamma_2(0,0)$

  Then the area bounded by the curve and the positive axes decays exponentially fast, satisfying
  \[
  A(t) < A(0)e^{-\frac{C_G^2}{\rho_0L(0)}t}
  \,,
  \]
  for all $t\in(0,T)$.
  \label{DC_area evolution}
  \end{prop}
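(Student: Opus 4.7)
The plan is to derive the differential inequality $A'(t)\le -\frac{C_G^2}{\rho_0 L(t)}A(t)$ and then apply a Gr\"onwall argument, leveraging the monotonicity $L(t)\le L(0)$ from Lemma~\ref{lmlengthevo} to produce $L(0)$ in the final exponent. Starting from $A(t)=\int_0^1\gamma_2(u,t)\,x_u(u,t)\,du$, differentiating in $t$, and integrating the $\gamma_2\,x_{ut}$ piece by parts, the boundary terms $[\gamma_2 x_t]_0^1$ vanish because $\gamma_2(L(t),t)=0$ at $u=1$ and $x_t(0,t) = F\nu_1(0,t) = 0$ at $u=0$ (using $\nu(0,t)=-e_2$). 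What survives is the standard normal-speed formula
\[
  A'(t) = -\int_\gamma(k+z)\,ds.
\]

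The second step is to bound each piece. In the graphical form $y=h(x,t)$ the curvature integral is a boundary term, $\int_\gamma k\,ds = -\arctan h_x(\rho_0,t)$, and since the remark after Theorem~\ref{graphpreservation} combined with Lemma~\ref{LMbcs} forces $h_x(\rho_0,t)\le 0$, the curvature contribution to $A'$ is already non-positive. For the zipping integral I would use $\nu_1=\tau_2=(\gamma_2)_s$ to rewrite $\int_\gamma\nu_1\nu_2^2\,ds = \int_\gamma(\gamma_2)_s\,\nu_2^2\,ds$; combined with the pointwise bound $\nu_2^2\ge C_G^2$ from the $e_2$-graphical hypothesis and the sign condition $(\gamma_2)_s\le 0$ (monotonicity of the curve's height along arc length), this yields
\[
  \int_\gamma\nu_1\nu_2^2\,ds \le C_G^2\int_\gamma(\gamma_2)_s\,ds = -C_G^2\gamma_2(0,t),
\]
so that $A'(t)\le -C_G^2\gamma_2(0,t)/L(t)$.

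To close the bound in terms of $A$ itself, I would apply the maximum principle to the parabolic equation for $\gamma_2$ derived in Lemma~\ref{DC_1st quadrant graph}: Hopf at $u=0$ is blocked by $\partial_s\gamma_2(0,t) = \tau_2(0,t) = 0$, so the initial inequality $\gamma_2(s,0)\le\gamma_2(0,0)$ propagates to $\gamma_2(s,t)\le\gamma_2(0,t)$, whence $A(t)\le \rho_0\gamma_2(0,t)$ and $\gamma_2(0,t)\ge A(t)/\rho_0$. Substituting gives $A'\le -C_G^2 A/(\rho_0 L)$, and integrating with $\int_0^t L(\tau)^{-1}\,d\tau\ge t/L(0)$ produces the claimed bound. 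The hard part will be securing $(\gamma_2)_s\le 0$ uniformly in $t$: the hypotheses only directly posit $\gamma_2(s,0)\le\gamma_2(0,0)$, but for the DC-type initial geometry of interest, the single-inflection structure (with $k(0)>0$, $k(L)<0$) forces $h_x\le 0$, and monotonicity is then preserved by a minimum-principle argument applied to $G_{-e_1}=-\nu_1$ in the style of Theorem~\ref{graphpreservation}; alternatively one routes through the identity $\int_\gamma\nu_1\nu_2^2\,ds = -\gamma_2(0,t) - \int_\gamma\nu_1^3\,ds$ together with $|\nu_1|^2\le 1 - C_G^2$ and a total-variation control on $\gamma_2$.
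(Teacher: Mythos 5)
Your approach is genuinely different from the paper's, and it contains a gap that the paper's route sidesteps.

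The paper does not differentiate the Lebesgue area $\int_0^{\rho_0}h\,dx$ directly. It differentiates the weighted height integral $\int_\gamma\gamma_2\,ds$. Because the arc-length element evolves by $(ds)_t=(-k^2-kz)\,ds$, the resulting time derivative carries an extra good term $-\int_\gamma k^2\gamma_2\,ds$, and the crucial step is a completion of the square,
\[
-k^2\gamma_2-\frac{2}{L}k\gamma_2\nu_1\nu_2^2
= -\gamma_2\Big(k+\frac{\nu_1\nu_2^2}{L}\Big)^2 + \frac{\gamma_2\nu_1^2\nu_2^4}{L^2}
\le \frac{\gamma_2\nu_1^2\nu_2^4}{L^2}\,,
\]
so the zipping contribution is absorbed using only $\nu_1^2\le1-C_G^2$, $\gamma_2\le\gamma_2(0,t)$ and $\tau_2(L(t),t)\le0$. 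No sign condition on $(\gamma_2)_s$ enters.

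Your identity $A'=-\int_\gamma(k+z)\,ds$ discards the $-\int_\gamma k^2\gamma_2\,ds$ term, so to control the zipping piece $\int_\gamma\nu_1\nu_2^2\,ds$ you are forced into the pointwise sign condition $(\gamma_2)_s=\nu_1\le0$. The proposition's hypotheses — graphicality with $G_{-e_2}\ge C_G$ together with $\gamma_2(s,0)\le\gamma_2(0,0)$ — do not imply this monotonicity: a profile that dips and then develops a small interior bump below $\gamma_2(0,0)$, with $|h_u|$ bounded, satisfies both assumptions while $(\gamma_2)_s$ changes sign. Your two workarounds do not close the gap. Relying on the single-inflection structure imports a hypothesis of Theorem~\ref{THMmain} that Proposition~\ref{DC_area evolution} does not assume, and a minimum principle for $G_{-e_1}$ would still need the initial sign $\nu_1(\cdot,0)\le0$ to get started. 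The identity $\int_\gamma\nu_1\nu_2^2\,ds=-\gamma_2(0,t)-\int_\gamma\nu_1^3\,ds$ delivers the required sign only if $\int_\gamma|\nu_1|\,ds\le\gamma_2(0,t)$, which is again monotonicity. The repair is to reinstate the good term: work with $\int_\gamma\gamma_2\,ds$ (which dominates the Lebesgue area) so that the $-\int_\gamma k^2\gamma_2\,ds$ piece is available to complete the square against the zipping integral, exactly as the paper does.
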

  \begin{proof}
  Calculating,
  \begin{align*}
  \frac{d}{dt}\int_\gamma \gamma_2\,ds
    &= \int_\gamma
      -\frac{\nu_2^3}{L}\partial_s\gamma_2+\partial_{ss}\gamma_2
     \,ds
     + \int_\gamma -k^2\gamma_2 - kz\gamma_2\,ds
    \\
    &= -\int_\gamma
      3k\gamma_2\frac{\tau_2\nu_2^2}{L}
     \,ds
    + \Big[
      -\frac{\nu_2^3\gamma_2}{L} + \tau_2
    \Big]_0^L
     + \int_\gamma -k^2\gamma_2 + k\gamma_2\frac{\nu_1\nu_2^2}{L}\,ds
    \\
    &=
     -\int_\gamma k^2\gamma_2\,ds
     - \frac{2}{L}\int k\gamma_2\nu_1\nu_2^2\,ds
    + \Big[
      -\frac{\nu_2^3\gamma_2}{L} + \tau_2
    \Big]_0^L
    \\
    &=
     -\int_\gamma k^2\gamma_2\,ds
     - \frac{2}{L}\int k\gamma_2\nu_1\nu_2^2\,ds
    + \tau_2(L,t)
    - \frac{\gamma_2(0,t)}{L}
    \,.
  \end{align*}
  Now we use the initial conditions.
  %%
  %% \nu_2^2 + \nu_1^2 = 1
  %%
  %% \nu_2^2 \ge C_G^2
  %%
  %% so \nu_1^2 \le 1-C_G^2
  %%
  Note that the maximum principle (see Lemma \ref{DC_1st quadrant graph}) implies that the initial condition $\gamma_2(s,0) \le \gamma_2(0,0)$ is preserved, that is,
  \[
    \gamma_2(s,t) \le \gamma_2(0,t)
  \,.
  \]
  Theorem \ref{graphpreservation} preserves graphicality, and in particular we have
  \[
    \nu_1^2 \le 1 - C_G^2
    \,.
  \]
  We also trivially have $\nu_2^2 < 1$ (note the inequality is strict for $s\in(0,L(t)]$, by the maximum principle applied to $\gamma_2$ (see Lemma 
  \ref{DC_1st quadrant graph})).
  Therefore
  \[
     -\int_\gamma k^2\gamma_2\,ds
     - \frac{2}{L}\int k\gamma_2\nu_1\nu_2^2\,ds
     \le \frac{1}{L^2}\int \nu_1^2\nu_2^4 \gamma_2\,ds
     < (1-C_G^2) \frac{\gamma_2(0,t)}{L}
  \,.
  \]
    Finally, note that Lemma \ref{graphpreservation} also implies $\tau_2(L(t),t) \le 0$.
  Combining this with the evolution of $A$ above, we find
  \begin{align*}
  \frac{d}{dt}\int_\gamma \gamma_2\,ds
    &< (1-C_G^2) \frac{\gamma_2(0,t)}{L(t)}
    - \frac{\gamma_2(0,t)}{L(t)}
    \\
    &= -C_G^2\frac{\gamma_2(0,t)}{L(t)}
  \,.
  \end{align*}
  %%
  %% want RHS \le -cA(t)... that is, -\gamma_2(0,t) \le -cA(t), of \gamma_2 \ge cA(t). OK, works.
  %%
  Since $A(t) \le \gamma_2(0,t)\rho_0$ (this is because $0 \le \gamma_2(s,t) \le \gamma_2(0,t)$), and $\rho_0 \le L(t) \le L(0)$ we find
  \[
  A'(t) < -\frac{C_G^2}{\rho_0L(0)}A(t)
  \,.
  \] 
  This yields the stated decay estimate.
  \end{proof}

  We take now the parametrisation
  \[
  \gamma(u,t)= (u, h(u,t))
  \,,
  \]
  with $u\in [0,\rho_0]$ and $h\in C^\infty([0,\rho_0])$, with the following boundary conditions:
  \[
  h_u(0,t) = 0\quad \text{and} \quad
  h(\rho_0,t) = 0
  \,,
  \]
  for all $t\in [0,T)$.
  In this parametrisation, the evolution equation \eqref{eqpar} becomes
  \begin{equation}
  \label{eqgraph}
  h_t =  \frac{h_{uu}}{1+h_{u}^2} + \frac{1}{L[h]}\frac{h_u}{1+h_{u}^2}
  \,.
  \end{equation}
  We say that \emph{$\gamma$ solves \eqref{eqgraph}} iff $\gamma(u,t)= (u,h(u,t))$, where the function $h$ solves \eqref{eqgraph}.

  \begin{thm}\label{LTE}
  Let $h_0\in C^{\infty}([0,\rho_0])$, with $(h_0)_u(0)=0$ and $h_0(\rho_0)=0$.
  Then, the maximal time of existence is infinite, and equation
  \eqref{eqgraph} admits a smooth solution $h\in C^\infty([0,\rho_0]\times
  [0,\infty))$. 
  \end{thm}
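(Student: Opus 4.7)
The plan is to show $T^{*}=\infty$, where $T^{*}\in(0,\infty]$ denotes the maximal time of smooth existence granted by Theorem~\ref{STE}. The argument is the standard continuation-by-a-priori-estimates procedure: assume $T^{*}<\infty$ and derive a contradiction by producing uniform $C^{\infty}$ bounds up to $T^{*}$. Most of the work is already done by earlier results in the paper; the only non-standard feature is the non-local coefficient $1/L[h]$, which must be threaded through the bootstrap.

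First I would collect the zeroth- and first-order estimates. The maximum principle applied directly to \eqref{eqgraph} with the boundary data $h_u(0,t)=0$, $h(\rho_{0},t)=0$ yields $\|h(\cdot,t)\|_{\infty}\le\max\{0,\|h_{0}\|_{\infty}\}$ for as long as the solution exists, since neither boundary can host a new interior-type extremum. Writing $\gamma(u,t)=(u,h(u,t))$ we have $G_{-e_{2}}=(1+h_{u}^{2})^{-1/2}$, so by Theorem~\ref{graphpreservation}
\[
G_{-e_{2}}(u,t)\ge \min_{[0,\rho_{0}]}G_{-e_{2}}(\cdot,0)=:C_{G}>0,
\]
which is equivalent to the uniform gradient bound $\|h_{u}\|_{\infty}\le\sqrt{C_{G}^{-2}-1}$. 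Finally $\rho_{0}\le L(t)\le L(0)$ holds by the straight-segment lower bound together with Lemma~\ref{lmlengthevo}, so $1/L[h]$ is uniformly bounded above and below.

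With these estimates in hand, \eqref{eqgraph} is a quasilinear uniformly parabolic equation with smooth coefficients and a bounded, Lipschitz (in $h_{u}$) non-local term. Applying parabolic Schauder theory \cite{LSU,Lu} on any $[0,T]\subset[0,T^{*})$ produces $C^{2,\alpha}$ bounds up to the boundary; differentiating \eqref{eqgraph} and bootstrapping then gives $C^{k,\alpha}$ estimates for every $k\ge 0$, uniform on $[0,T^{*})$. As $t\to T^{*}$, $h(\cdot,t)$ therefore converges in $C^{\infty}([0,\rho_{0}])$ to a smooth limit satisfying the same boundary conditions, and Theorem~\ref{STE} applied to this limit produces a smooth continuation past $T^{*}$, contradicting maximality. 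Hence $T^{*}=\infty$.

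The main obstacle is accommodating the non-local coefficient $1/L[h]$ inside the Schauder bootstrap, since the classical theory is set up for locally-dependent coefficients. The uniform lower bound $L(t)\ge\rho_{0}$ together with the elementary estimate $|L[h]-L[\tilde h]|\le C\|h_{u}-\tilde h_{u}\|_{L^{1}}$ (valid whenever $\|h_{u}\|_{\infty}+\|\tilde h_{u}\|_{\infty}$ is controlled) ensure that the non-local term enters the bootstrap merely as a time-dependent $C^{\infty}$ coefficient on which the standard theory imposes no further structural requirement. A secondary subtlety is boundary compatibility at each order of differentiation at $u=\rho_{0}$, which follows from the smoothness of $h_{0}$ on the closed interval and the explicit form of \eqref{eqgraph}.
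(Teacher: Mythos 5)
Your argument is correct and follows essentially the same route as the paper: the paper's proof simply notes that, by standard parabolic regularity theory, finite maximal time would force the gradient of $h$ to blow up, while Theorem~\ref{graphpreservation} gives a uniform bound on $h_u$, hence $T=\infty$. You have merely spelled out the continuation argument (sup bound, length bounds $\rho_0\le L\le L(0)$, Schauder bootstrap with the non-local coefficient treated as a bounded time-dependent coefficient) that the paper compresses into its citation of \cite{Li,LSU}.
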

  \begin{proof}
  By standard parabolic regularity theory \cite{Li,LSU}, if the maximal time of
  existence is finite, then the gradient of $h$ must blow up as $t\rightarrow T$.
  However Theorem \ref{graphpreservation} ensures that $h_u$ is uniformly bounded
  for all $t$, and so we conclude that $T=\infty$.
  \end{proof}

  We now prove the following compactness result.

  \begin{prop}\label{provo}
    Let $T>0$ be arbitrary.
    Let $h_0\in W^{1,\infty}([0,\rho_0])$, with $(h_0)_u(0)=0$ (weakly) and $h_0(\rho_0)=0$.
    Take $h_n\in C^\infty([0,\rho_0]\times [0,T])$ to be the solutions of
    \eqref{eqgraph} given by Theorem \ref{LTE} with $h_n(\cdot,0)\rightarrow
    h_0$ in $H^1$.
    Then there exists $h\in H^1([0,T],L^2([0,\rho_0]))\cap L^\infty([0,T],W^{1,\infty}([0,\rho_0]))$
    such that $h_n\to h$, up to a subsequence, uniformly on $[0,\rho_0]\times [0,T]$.
  \end{prop}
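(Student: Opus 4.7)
The plan is to derive $n$-independent a priori bounds strong enough to invoke an Aubin--Lions--Simon type compactness theorem, and then extract a uniformly convergent subsequence. The three ingredients I would pursue are (a) a uniform $W^{1,\infty}$ bound inherited from graph preservation, (b) an $L^2$ bound on the second space-derivative coming from length-monotonicity, and (c) an $L^2$ bound on the time-derivative coming directly from the PDE.

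First, I would refine the choice of smooth approximation. Since $h_0 \in W^{1,\infty}$, standard mollification produces smooth approximations $h_n(\cdot,0)$ satisfying the boundary conditions, converging to $h_0$ in $H^1$, and also enjoying a uniform Lipschitz bound $\|(h_n)_u(\cdot,0)\|_\infty \le M$ depending only on $\|h_0\|_{W^{1,\infty}}$. These initial data are then $e_2$-graphical with $G_{-e_2}(\cdot,0) \ge C_G := (1+M^2)^{-1/2}$, and Theorem~\ref{graphpreservation} propagates this lower bound to all $t \in [0,T]$, translating in the graphical parametrisation to $\|(h_n)_u\|_{L^\infty([0,\rho_0]\times[0,T])} \le M$. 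Combined with the Dirichlet boundary condition $h_n(\rho_0,t)=0$ and the fundamental theorem of calculus, this yields a uniform $L^\infty$ bound on $h_n$ itself.

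Next I would integrate the length-evolution estimate of Lemma~\ref{lmlengthevo} over $[0,T]$ to obtain $\int_0^T\!\int_\gamma k^2\,ds\,dt \le L(0)$. Converting to the graphical parametrisation via $k^2\,ds = (h_{uu})^2(1+h_u^2)^{-5/2}\,du$ and using the uniform gradient bound to control the denominator from above and below gives a uniform $L^2$ bound on $(h_n)_{uu}$. Feeding this back into \eqref{eqgraph} together with $L \ge \rho_0$ yields the pointwise bound $|(h_n)_t| \le |(h_n)_{uu}| + |(h_n)_u|/\rho_0$, which in turn delivers a uniform bound on $\|(h_n)_t\|_{L^2([0,\rho_0]\times[0,T])}$.

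Finally, I would combine these bounds: $\{h_n\}$ is bounded in $L^\infty([0,T],W^{1,\infty}([0,\rho_0]))$ while $\{(h_n)_t\}$ is bounded in $L^2([0,T],L^2([0,\rho_0]))$. The embedding $W^{1,\infty}([0,\rho_0]) \hookrightarrow C^0([0,\rho_0])$ is compact, and the time-derivative bound provides uniform equicontinuity of $\{h_n\}$ in $C^0([0,T],L^2([0,\rho_0]))$. The continuous form of the Aubin--Lions--Simon theorem then supplies a subsequence converging in $C^0([0,T], C^0([0,\rho_0])) = C^0([0,\rho_0]\times[0,T])$, i.e. uniformly. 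Weak-$\ast$ compactness in $L^\infty([0,T],W^{1,\infty})$ and weak compactness in $H^1([0,T],L^2)$, together with lower semicontinuity of norms, place the limit $h$ in the stated space. The main obstacle is the intermediate step converting the geometric integral estimate $\int k^2\,ds$ into an estimate on the parametric second derivative $(h_n)_{uu}$; without the pointwise Lipschitz bound of step one, the arclength element $\sqrt{1+h_u^2}$ could degenerate or blow up, and the curvature bound alone would be insufficient to produce the required Sobolev estimate.
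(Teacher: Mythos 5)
Your proposal is correct and follows essentially the same route as the paper's proof: the uniform gradient bound propagated by graph preservation, the space--time $L^2$ bound on $(h_n)_{uu}$ obtained by integrating the length evolution of Lemma~\ref{lmlengthevo} and using the gradient bound to control the factors of $1+(h_n)_u^2$, and the $L^2$ bound on $(h_n)_t$ read off directly from \eqref{eqgraph} with $L\ge\rho_0$. The only immaterial differences are in the last step, where you invoke Aubin--Lions--Simon while the paper uses the embedding $H^1([0,T],L^2)\cap L^\infty([0,T],H^1)\hookrightarrow C^{\frac12,\frac14}([0,\rho_0]\times[0,T])$ to upgrade weak convergence to uniform convergence, and in your (welcome) explicit care to choose mollified initial data with a uniform Lipschitz bound, a point the paper leaves implicit.
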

  \begin{proof}%{{{
  By Theorem \ref{LTE} for any $T>0$, the solutions $h_n$ are uniformly bounded in $L^\infty([0,T],W^{1,\infty}([0,\rho_0]))$. 
  In order to prove the result we also need a uniform bound for $h_n$ in $H^1([0,T],L^2([0,\rho_0]))$.
  For this it remains to estimate
  \[
  \int_0^T \int_0^{\rho_0} |(h_n)_t|^2\,du\,dt
  \,.
  \]
  We find
  \begin{align*}
  \int_0^{\rho_0} |(h_n)_t|^2\,du
    &= \int_0^{\rho_0} \Big|\frac{(h_n)_{uu}}{1+(h_n)_{u}^2} + \frac{1}{L[(h_n)_]}\frac{(h_n)_u}{1+(h_n)_{u}^2} \Big|^2\,du
    \\
    &\le 2\int_0^{\rho_0} \frac{(h_n)_{uu}^2}{(1+(h_n)_u^2)^2}\,du
     + \frac{2}{L^2[(h_n)]}\int_0^{\rho_0}\frac{(h_n)_u^2}{(1+(h_n)_{u}^2)^2}\,du
    \\
    &\le 2\int_0^{\rho_0} \frac{(h_n)_{uu}^2}{(1+(h_n)_u^2)^2}\,du
     + \frac{2}{\rho_0^2}
  \,.
  \end{align*}
  Preservation of graphicality (Theorem \ref{graphpreservation}) implies the gradient bound
  %%
  %% 1/(1+h_u^2) = \nu_2^2 \ge C_G^2
  %%
  %% h_u^2 \le \frac{1-C_G^2}{C_G^2}
  %%
  \[
    (h_n)_u^2 \le  \frac{1-C_G^2}{C_G^2}
  \]
  and in particular
  \[
  \frac{1}{\sqrt{1+(h_n)_u^2}} \ge C_G
  \,.
  \]
  (Recall that $C_G$ depends only on $\vn{(h_0)_u}_{L^\infty}$.)
  This allows us to estimate
  \begin{align*}
  \int_\gamma k^2\,ds
    &=   \int_0^{\rho_0} \frac{(h_n)_{uu}^2}{(1+(h_n)_u^2)^3}(1+(h_n)_u^2)^\frac12\,du
  \\&=   \int_0^{\rho_0} \frac{(h_n)_{uu}^2}{(1+(h_n)_u^2)^2}(1+(h_n)_u^2)^{-\frac12}\,du
  \\&\ge C_G\int_0^{\rho_0} \frac{(h_n)_{uu}^2}{(1+(h_n)_u^2)^2}\,du
  \,.
  \end{align*}
  We then integrate Lemma \ref{lmlengthevo} to find
  \begin{align*}
    2\int_0^T \int_0^{\rho_0} \frac{(h_n)_{uu}^2}{(1+(h_n)_u^2)^2}\,du\,dt
    &\le
    2C_G^{-1}\int_0^T \int_\gamma k^2\,ds\,dt
    \\
    &= 2C_G^{-1}\int_0^T -L'(t)\,dt
    = 2C_G^{-1}(L(0) - L(T))
    \le 2C_G^{-1}L(0)
  \end{align*}
  which depends only on $\vn{(h_0)_u}_{L^\infty}$.

  We therefore find
  \[
  \int_0^T \int_0^{\rho_0} |(h_n)_t|^2\,du\,dt
    \le 2C_G^{-1}L(0) + \frac{2T}{\rho_0^2}
  \]
  which is an estimate that depends only on $\vn{(h_0)_u}_{L^\infty}$ and $T$.

  It then follows that the sequence $h_n$ converges, up to a subsequence as $n\to\infty$, to a limit function 
  $h$ in the weak topology of $H^1([0,T],L^2([0,{\rho_0}]))\cap L^\infty([0,T],H^1([0,{\rho_0}]))$.
  Due to the continuous embedding (this depends strongly on the fact that we are in one dimension)
  \[
  H^1([0,T],L^2([0,{\rho_0}]))\cap L^\infty([0,T],H^1([0,{\rho_0}]))\hookrightarrow  
  C^{\frac 1 2,\frac 1 4}([0,{\rho_0}]\times [0,T])\,,
  \]
  the convergence is uniform.
  \end{proof}%}}}

  We now define the notion of weak solution. 
   
  \begin{defn}\label{defw}
    We say that a function $h\in H^1([0,T],L^2([0,{\rho_0}]))\cap L^\infty([0,T],H^1([0,{\rho_0}]))$ 
    is a weak solution of \eqref{eqgraph} if 
    \begin{equation}\label{eqdefw}
      \int_{[0,{\rho_0}]\times [0,T]} \left( h_t \varphi 
      + \arctan(h_u)\varphi_u - \frac1{L[h]}\frac{h_u}{1+h_u^2}\,\varphi\right) \,du\,dt = 0
    \end{equation}
    for all test functions $\varphi \in C^1_c([0,{\rho_0}]\times(0,T))$, with boundary condition $h({\rho_0},t) = 0$.
  \end{defn}

  We have the following existence theorem  for weak solution to  \eqref{eqgraph}.

  \begin{thm}
    \label{thmw2inf}
    Let $h_0\in W^{2,\infty}([0,{\rho_0}])$ with 
    $(h_0)_u(0) = 0$ and $h_0({\rho_0}) = 0$.
    Then, there exists $T>0$ depending only on $h_0$
    such that equation \eqref{eqgraph} admits a weak solution 
    \[
      h\in 
      W^{1,\infty}([0,T],L^\infty([0,{\rho_0}]))
      \cap 
      L^\infty([0,T],W^{2,\infty}([0,{\rho_0}]))
      \,.
    \]
  \end{thm}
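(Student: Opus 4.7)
The plan is to obtain the weak solution as a limit of the smooth solutions produced by Theorem \ref{LTE}, much as in Proposition \ref{provo}, but using the stronger $W^{2,\infty}$ initial regularity to upgrade the function spaces in which we control the approximants. First I would approximate $h_0$ by a sequence $h_{0,n}\in C^{\infty}([0,\rho_0])$ satisfying $(h_{0,n})_u(0)=0$ and $h_{0,n}(\rho_0)=0$, with the full compatibility conditions of Theorem \ref{STE}, such that $h_{0,n}\to h_0$ uniformly and $\|(h_{0,n})_u\|_{L^\infty}$ and $\|(h_{0,n})_{uu}\|_{L^\infty}$ are uniformly controlled by the corresponding norms of $h_0$; this is a standard mollification plus boundary cut-off. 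Theorem \ref{LTE} then supplies global smooth solutions $h_n$ on $[0,\rho_0]\times[0,\infty)$.

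Next I would establish the required uniform estimates on some short interval $[0,T]$. The $L^\infty_t W^{1,\infty}_u$ control is immediate: the gradient bound $\|(h_n)_u\|_{L^\infty}^2 \le (1-C_G^2)/C_G^2$ with $C_G=C_G(\|(h_0)_u\|_{L^\infty})$ follows from Theorem \ref{graphpreservation}, as in the proof of Proposition \ref{provo}. The new ingredient is a uniform short-time bound on $(h_n)_{uu}$, equivalently on the curvature $k_n$. Using Lemma \ref{Lem DC commutator}, Lemma \ref{Lem tau nu evolution} and Lemma \ref{Z derivatives}, together with $L(t)\ge\rho_0$ and $|\nu_i|\le 1$, I would derive a pointwise evolution inequality of the schematic form
\[
  (\partial_t - \partial_{ss})\, k_n^2 \;\le\; C_1\, k_n^4 + C_2\, k_n^2 + C_3
\]
with constants depending only on $\rho_0$ and $C_G$; the boundary terms are benign because Lemma \ref{LMbcs} gives $k_n(L(t),t)$ bounded in terms of $1/L\le1/\rho_0$ and the Neumann boundary produces $k_{n,s}(0,t)=-k_n(0,t)/L(t)$, so a Hopf-type argument rules out boundary maxima up to harmless lower-order corrections. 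Applying the maximum principle to this inequality reduces the problem to the ODE $y' \le C_1 y^2 + C_2 y + C_3$ for $y(t)=\sup_{[0,\rho_0]} k_n^2(\cdot,t)$; this blows up only in finite time, so there exists $T = T(\|(h_0)_{uu}\|_{L^\infty},\|(h_0)_u\|_{L^\infty},\rho_0)>0$ on which $\|k_n\|_{L^\infty([0,\rho_0]\times[0,T])}$ is uniformly bounded. Combined with the gradient bound, this yields the uniform $L^\infty([0,T],W^{2,\infty}([0,\rho_0]))$ estimate. Equation \eqref{eqgraph} then gives $|(h_n)_t|\le \|(h_n)_{uu}\|_{L^\infty} + 1/\rho_0$ pointwise, hence a uniform bound in $W^{1,\infty}([0,T],L^\infty([0,\rho_0]))$.

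The passage to the limit is then routine. Banach--Alaoglu extracts a subsequence converging weak-$\star$ in both target spaces to some $h$ in the stated regularity class. The compact embedding of $L^\infty([0,T],W^{2,\infty})\cap W^{1,\infty}([0,T],L^\infty)$ into $C^0([0,\rho_0]\times[0,T])$ gives uniform convergence of $h_n$ to $h$, and, combined with the uniform $W^{2,\infty}$ bound, uniform convergence of $(h_n)_u$ as well. This in turn yields $L[h_n]\to L[h]$ uniformly in $t$, with $L[h_n],L[h]\ge\rho_0$. Inserting all this into the weak form \eqref{eqdefw}, the $\arctan((h_n)_u)\varphi_u$ term converges by dominated convergence, the nonlocal term converges since the denominator is bounded below and $(h_n)_u$ converges uniformly, and the time-derivative term passes to the limit by the weak convergence of $(h_n)_t$ in $L^\infty_t L^\infty_u$ against $\varphi\in C^1_c$. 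The boundary condition $h(\rho_0,t)=0$ survives by uniform convergence.

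The principal obstacle is the short-time uniform second-derivative bound: the nonlinear and nonlocal nature of \eqref{eqpar} forces one to track the curvature evolution carefully and to check that all boundary contributions in the maximum-principle argument for $k^2$ are controllable in terms of initial data only. This is precisely the reason the existence time $T$ depends on $\|h_0\|_{W^{2,\infty}}$ rather than only on $\|h_0\|_{W^{1,\infty}}$, and why we cannot, at this level of regularity, simply iterate to get global-in-time weak existence through this theorem alone.
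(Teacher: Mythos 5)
Your overall plan (a uniform short-time $W^{2,\infty}$ bound for the smooth approximants via a maximum principle for the curvature, then weak-$\star$ compactness and passage to the limit in \eqref{eqdefw}) is a legitimate alternative strategy, and the interior differential inequality you write for $k^2$ does follow from Lemmas \ref{Lem DC commutator}, \ref{Lem tau nu evolution} and \ref{Z derivatives} after absorbing the $k k_s/L$ term into $-2k_s^2$. The gap is at the boundary $s=0$ (equivalently $u=0$). You claim that ``a Hopf-type argument rules out boundary maxima'' there, but Lemma \ref{LMbcs} gives $k_s(0,t)=-k(0,t)/L(t)$, hence
\[
\partial_s\bigl(k^2\bigr)(0,t)=-\frac{2}{L(t)}\,k^2(0,t)\le 0\,,
\]
and since the outward normal at $s=0$ is $-\partial_s$, this is exactly the sign that Hopf's lemma predicts at a boundary maximum; no contradiction arises, so a new maximum of $k^2$ at the left endpoint is not excluded. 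At such a boundary point you have no sign information on $\partial_{ss}(k^2)$, so the ODE comparison $y'\le C_1y^2+C_2y+C_3$ for $y(t)=\sup k^2$ does not follow as stated. (In graphical coordinates the same obstruction reads $h_{uuu}(0,t)=-h_{uu}(0,t)/L(t)$.) The step is repairable — e.g.\ run the maximum principle on a weighted quantity such as $k^2e^{\lambda s}$ with $\lambda>2/\rho_0$, which flips the sign of the outward derivative at $s=0$ at the cost of harmless lower-order terms, or estimate the velocity rather than the curvature — but as written the central estimate of your argument is not justified.

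This is precisely where the paper proceeds differently: it runs the maximum principle on the velocity, i.e.\ on $\hat\varphi=e^{-t}\tilde h_t^2/2$, whose boundary data are clean ($h_t(\rho_0,t)=0$ from the Dirichlet condition and $h_{tu}(0,t)=0$ from differentiating $h_u(0,t)=0$ in time), so the Hopf argument closes without any sign issue and yields the $h_{uu}$ bound through \eqref{eqgraph}. Moreover, the paper does not need a uniform second-derivative bound on the approximants to pass to the limit: it first obtains the weak solution from the energy estimate \eqref{EQht} for $\tilde h_t$ together with the $\arctan((h_n)_u)$ compactness device (giving a.e.\ convergence of $(h_n)_u$), and only afterwards upgrades to $L^\infty([0,T],W^{2,\infty})$; a consequence is that its bounds hold on every finite time interval, whereas your ODE comparison, even once repaired, produces only a genuinely short existence time.
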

  \begin{proof}
  By Proposition \ref{provo} there exist $T>0,$ depending only on $\left\|h_{0}\right\|_{H^{1}}$ and smooth solutions
  $h_{n}$ of \eqref{eqgraph} which converge, up to a subsequence, uniformly to a limit function $h$ in
  the weak topology of $H^{1}\left([0, T], L^{2}([0,1])\right) \cap
  L^{\infty}\left([0, T], H^{1}([0,1])\right)$.
  %By Proposition \ref{provo} there exists a solution $u$ in
  %$H^1([0,T],L^2([0,{\rho_0}]))\cap L^\infty([0,T],H^1([0,{\rho_0}]))$.
  We wish to prove that $u$ is a weak solution of \eqref{eqgraph}. The primary difficulty is in showing that 
  ${u_n}_x$ converges to $u_x$ almost everywhere, so that we can pass to the limit in \eqref{eqdefw}.

  We compute (in the following computation we use $\tilde{h} = h_n$)
  \begin{align}\label{uut}
  \nonumber 
  \partial_t \frac{\tilde h_t^2}{2} &= \tilde h_t\, \tilde h_{tt} = \tilde h_t\left( \frac{\tilde h_{uu}}{1+\tilde h_u^2} + \frac1{L[\tilde h]}\frac{\tilde h_u}{1+\tilde h_u^2}\right)_t 
  \\
  &= \frac{\tilde h_t\,\tilde h_{tuu}}{1+\tilde h_u^2} - 2 \frac{\tilde h_u\,\tilde h_{uu}}{(1+\tilde h_u^2)^2}\left( \frac{\tilde h_t^2}{2} \right)_u
  + \frac1{L[\tilde h]}\bigg(
    \frac{1}{1+\tilde h_u^2}
    - 2\frac{\tilde h_u}{(1+\tilde h_u^2)^2}
    \bigg)\left( \frac{\tilde h_t^2}{2} \right)_u
  \\&\quad\nonumber
    - \tilde h_t\frac{L'[\tilde h]}{L^2[\tilde h]}\frac{\tilde h_u}{1+\tilde h_u^2}
  \\
  \nonumber 
  &= \frac{1}{1+\tilde h_u^2}\left( \frac{\tilde h_t^2}{2} \right)_{uu} 
    - \frac{\tilde h_{ut}^2}{1+\tilde h_u^2}
  + \frac1{L[\tilde h]}\bigg(
    \frac{1}{1+\tilde h_u^2}
    - 2\frac{\tilde h_u(1+\tilde h_{uu})}{(1+\tilde h_u^2)^2}
    \bigg)\left( \frac{\tilde h_t^2}{2} \right)_u
  \\&\quad\nonumber
    - \tilde h_t\frac{L'[\tilde h]}{L^2[\tilde h]}\frac{\tilde h_u}{1+\tilde h_u^2}
  \,.
  \end{align}
  Now, calculate
  \begin{align*}
  \partial_t \int_0^{\rho_0} \frac{\tilde h_t^2}{2}\,du
   &= \int_0^{\rho_0}\tilde h_t\,\tilde h_{tt}\,du
  \\
   &=
     \int_0^{\rho_0}
    \frac{1}{1+\tilde h_u^2}\left( \frac{\tilde h_t^2}{2} \right)_{uu} 
    - \frac{\tilde h_{ut}^2}{1+\tilde h_u^2}
  + \frac1{L[\tilde h]}\bigg(
    \frac{1}{1+\tilde h_u^2}
    - 2\frac{\tilde h_u(1+\tilde h_{uu})}{(1+\tilde h_u^2)^2}
    \bigg)\left( \frac{\tilde h_t^2}{2} \right)_u
    \,du
    \\&\qquad
    - \int_0^{\rho_0} \tilde h_t\frac{L'[\tilde h]}{L^2[\tilde h]}\frac{\tilde h_u}{1+\tilde h_u^2}
      \,du
  \\
   &=
    - \int_0^{\rho_0}
    \frac{\tilde h_{ut}^2}{1+\tilde h_u^2}
    \,du
    + 2\int_0^{\rho_0}
    \frac{\tilde h_u\,\tilde h_{uu}}{(1+\tilde h_u^2)^2}\left( \frac{\tilde h_t^2}{2} \right)_{u} 
    \,du
    + \bigg[
    \frac{1}{1+\tilde h_u^2}\left( \frac{\tilde h_t^2}{2} \right)_{u} 
      \bigg]_0^{\rho_0}
  \\&\qquad
    + \int_0^{\rho_0}
          \frac1{L[\tilde h]}\bigg(
    \frac{1}{1+\tilde h_u^2}
    - 2\frac{\tilde h_u(1+\tilde h_{uu})}{(1+\tilde h_u^2)^2}
    \bigg)\left( \frac{\tilde h_t^2}{2} \right)_u
    \,du
    \\&\qquad
    + (-L'[\tilde h]) \int_0^{\rho_0} \frac{1}{L^2[\tilde h]}\frac{\tilde h_u}{1+\tilde h_u^2}
      \bigg(\frac{\tilde h_{uu}}{1+\tilde h_u^2} + \frac1{L[\tilde h]}\frac{\tilde h_u}{1+\tilde h_u^2}\bigg)
      \,du
  \\
   &\le 
    - \frac12\int_0^{\rho_0}
    \frac{\tilde h_{ut}^2}{1+\tilde h_u^2}
    \,du
    + C\int_\gamma k^2\,ds
    + C(1 + \rho_0^{-2})
  \end{align*}
  where the constant $C>0$ depends only on $\vn{h_0}_{W^{2,\infty}}$.
  Note that in the above we used the boundary conditions $\tilde h_t({\rho_0},t) = 0$, $\tilde h_u(0,t) = 0$, and
  \begin{align*}
    (-L'[\tilde h]) &\int_0^{\rho_0} \frac{1}{L^2[\tilde h]}\frac{\tilde h_u}{1+\tilde h_u^2}
      \bigg(\frac{\tilde h_{uu}}{1+\tilde h_u^2} + \frac1{L[\tilde h]}\frac{\tilde h_u}{1+\tilde h_u^2}\bigg)
      \,du
  \\
    &= (-L'[\tilde h])\bigg\{ \bigg[-\frac12\frac{1}{L^2[\tilde h]}\frac{1}{1+\tilde h_u^2}\bigg]_0^{\rho_0}
     + \int_0^{\rho_0} \frac{1}{L^3[\tilde h]^2}\frac{\tilde h_u^2}{(1+\tilde h_u^2)^2}
      \,du
      \bigg\}
  \\&\le C\int_\gamma k^2\,ds
  \,.
  \end{align*}
  Recall that $\int_0^t\int_\gamma k^2\,ds\,dt' \le L(0)$ by Lemma \ref{lmlengthevo}.
  Then
  \begin{equation}
  \label{EQht}
  \int_0^{\rho_0} \frac{\tilde h_t^2}{2}\,du
  + \frac12\int_0^t \frac{\tilde h_{ut}^2}{1+\tilde h_u^2}\,dt'
  \le \int_0^{\rho_0} \frac{\tilde h_t^2}{2}\,du\bigg|_{t=0}
    + Ct + C
  \end{equation}
  where $C$ may denote a new constant that depends only on $\vn{h_0}_{W^{2,\infty}}$ (and $\rho_0$).

  Note that
  \begin{align*}
  \tilde h_t^2
  &= \frac{\tilde h_{uu}^2}{(1+\tilde h_u^2)^2}
    + 2L^{-1} \frac{\tilde h_u\tilde h_{uu}}{(1+\tilde h_u^2)^2}
    + L^{-2} \frac{\tilde h_u^2}{(1+\tilde h_u^2)^2}
  \\
  &= \left(\arctan(\tilde h_u)\right)_u^2
    + 2L^{-1} \frac{\tilde h_u\tilde h_{uu}}{(1+\tilde h_u^2)^2}
    + L^{-2} \frac{\tilde h_u^2}{(1+\tilde h_u^2)^2}
  \\
  &\ge
   \frac12\left(\arctan(\tilde h_u)\right)_u^2
    - L^{-2} \frac{\tilde h_u^2}{(1+\tilde h_u^2)^2}
  \,.
  \end{align*}
  Therefore the uniform gradient estimate and the above calculation yields the estimates
  \begin{align}\nonumber 
  &\int_0^{\rho_0} \left( \arctan(\tilde h_u)\right)_u^2\,du 
    \le C \qquad \forall t\in [0,T]\,,\text{ and}
  \\ \label{utx}
  &\int_0^T \int_0^{\rho_0} \left(\arctan(\tilde h_u)\right)_t^2\,du\,dt 
  =
  \int_0^T\int_0^{\rho_0} \frac{\tilde h_{ut}^2}{1+\tilde h_u^2} \,dx\,dt\le C\,, 
  \end{align}
  where now $C$ depends additionally on $T$.

  This means that the function $\arctan((h_n)_u)$ is uniformly bounded in 
  \[
    H^1([0,T],L^2([0,T]))\cap L^\infty([0,T],H^1([0,{\rho_0}]))
    \,.
  \]
  Therefore, the sequence $\arctan((h_n)_u)$ converges, up to a subsequence, to $\arctan(h_u)$ uniformly on $[0,{\rho_0}]\times [0,T]$. 
  Since $\arctan$ is injective this implies that the sequence $(h_n)_u$ converges to $h_u$ a.e. on $[0,{\rho_0}]\times [0,T]$,
  and we can pass to the limit in \eqref{eqdefw}, obtaining that $h$ is a weak solution of \eqref{eqgraph}.

  Since $\arctan({h_u})$ continuous, we have that $h_u$ is also continuous 
  (hence bounded) on $[0,{\rho_0}]\times [0,T]$.
  To finish the proof we wish to conclude a bound on $h_{uu}$, which would imply
  $h\in L^\infty([0,T],W^{2,\infty}([0,{\rho_0}]))$.

  To see this, we note that our above estimates (in particular the gradient estimate and \eqref{EQht}) imply
  \[
    \int k^2\,ds \le CT\,,
  \]
  for some universal constant $C$. Setting $\varphi = \frac{\tilde h_t^2}{2}$ and $\hat\varphi = \varphi e^{-t}$ we find
  \begin{align*}
  \bigg(
    \partial_t &- \frac{1}{1+\tilde h_u^2}\partial^2_{uu}
    - \frac1{L[\tilde h]}\bigg(\frac{1}{1+\tilde h_u^2} - 2\frac{\tilde h_u(1+\tilde h_{uu})}{(1+\tilde h_u^2)^2}\bigg)\partial_u
  \bigg)\hat\varphi
  \\&\le -\hat\varphi + C\sqrt{\hat\varphi}e^{-t/2}
  \\&\le -\frac12\hat\varphi + \frac{C^2}{2}
  \,.
  \end{align*}
  The boundary conditions imply $\hat\varphi_u(0) = 0$ and $\hat\varphi(\rho) = 0$.
  Using the Hopf Lemma then gives $\hat\varphi \le C^2$, which results in the estimate
  \[
    \tilde h_t^2 \le 2C^2e^t \le 2C^2e^T
    \,.
  \]
  In light of \eqref{eqgraph}, this implies a uniform bound on $h_{uu}$ and we are finished.
  \end{proof}

  Finally we note the following $W^{4,\infty}$ estimate, which is needed for the numerical analysis.

  \begin{prop}\label{propw52}
  Let $h_0\in W^{5,P}([0,{\rho_0}])$, $P\ge2$, with $(h_0)_u(0) = 0$ and $h_0({\rho_0}) = 0$.
  Then, the weak solution generated by Theorem \ref{thmw2inf} above satisfies
  \[
    \vn{h}_{W^{4,\infty}([0,{\rho_0}]))} \le C(h_0,\rho_0,T)
  \]
  and
  \[
    \vn{h}_{W^{5,P}([0,{\rho_0}]))} \le C(h_0,\rho_0,T,P)
    \,.
  \]
  \end{prop}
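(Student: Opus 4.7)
The plan is a standard parabolic bootstrap, seeded by the $W^{2,\infty}$ bound of Theorem \ref{thmw2inf}. Once $h_u$ is uniformly bounded, equation \eqref{eqgraph} is uniformly parabolic, since $1 \le 1+h_u^2 \le 1 + \vn{h_u}_{L^\infty}^2$; moreover the scalar non-local factor $1/L[h]$ is uniformly bounded above by $1/\rho_0$, and its time derivative $-L'[h]/L^2[h]$ is integrable in time by Lemma \ref{lmlengthevo}. Thus \eqref{eqgraph} can be viewed as a uniformly parabolic quasilinear PDE with smooth coefficients and smooth, bounded inhomogeneity.

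To reach $W^{4,\infty}$, I would differentiate \eqref{eqgraph} twice in $u$ to obtain a uniformly parabolic equation for $w := h_{uu}$ of the form
\[
w_t = \frac{1}{1+h_u^2}\, w_{uu} + R(u,t)\,,
\]
where the remainder $R$ collects lower-order terms that are controlled by the existing $W^{2,\infty}$ estimate on $h$ together with the length bounds $\rho_0 \le L[h] \le L(0)$. Standard parabolic $L^P$ estimates (Ladyzhenskaya--Solonnikov--Uraltseva \cite{LSU,Li}) applied with the compatible Neumann--Dirichlet data then give $w_u, w_t \in L^P([0,\rho_0]\times[0,T])$; in one space dimension, the continuous embedding $W^{1,P}([0,\rho_0])\hookrightarrow L^\infty([0,\rho_0])$ for $P>1$ converts this into $h_{uuu} \in L^\infty([0,\rho_0])$ uniformly in $t$. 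Differentiating once more in $u$ and iterating produces $h \in L^\infty([0,T], W^{4,\infty}([0,\rho_0]))$. The $W^{5,P}$ bound then follows from one further round of differentiation, with the $L^P$ parabolic estimates now applied to an equation whose coefficients are controlled by the $W^{4,\infty}$ bound from the previous step.

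The main obstacle is tracking the boundary and compatibility conditions under successive differentiation. The given Neumann condition $h_u(0,t)=0$ and Dirichlet condition $h(\rho_0,t)=0$ translate, via the equation and its $t$-derivatives, into induced conditions on higher derivatives; for example, differentiating \eqref{eqgraph} in $t$ at $u=\rho_0$ yields a relation involving $h_{uut}(\rho_0,t)$, and the Neumann condition at $u=0$ similarly propagates to odd-order spatial derivatives through the equation. The hypothesis $h_0 \in W^{5,P}$ together with $(h_0)_u(0)=0$ and $h_0(\rho_0)=0$ guarantees the requisite order of compatibility at $t=0$, so the parabolic regularity theory applies up to the boundary. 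The non-local factor $L[h]$ is a scalar function of $t$ alone; by Lemma \ref{lmlengthevo} its time derivative is in $L^1([0,T])$, so it introduces only bounded, lower-order corrections that do not obstruct the bootstrap.
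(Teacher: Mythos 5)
Your route (differentiate \eqref{eqgraph} in $u$ and bootstrap with linear parabolic $L^P$/Schauder theory) is genuinely different from the paper's, but as written it has a real gap at the corners $t=0$, $u\in\{0,\rho_0\}$. The estimates you invoke from \cite{LSU,Li} hold \emph{up to $t=0$} only under compatibility conditions, and these are not implied by the hypotheses of the proposition. Concretely, for $t>0$ the boundary conditions propagate through the equation to give $h_{uu}(\rho_0,t)=-h_u(\rho_0,t)/L(t)$ (differentiate $h(\rho_0,t)=0$ in $t$ and use \eqref{eqgraph}) and $h_{uuu}(0,t)=-h_{uu}(0,t)/L(t)$ (differentiate \eqref{eqgraph} in $u$ at $u=0$, where $h_u=0$). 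So $w=h_{uu}$ satisfies a Dirichlet condition with time-dependent, $L(t)$-coupled data at $\rho_0$ and a Robin-type condition at $0$ — not the ``compatible Neumann--Dirichlet data'' you assume — and zeroth-order compatibility for the $w$-problem amounts to $(h_0)_{uu}(\rho_0)+(h_0)_u(\rho_0)/L[h_0]=0$ and $(h_0)_{uuu}(0)+(h_0)_{uu}(0)/L[h_0]=0$. Neither follows from $h_0\in W^{5,P}$, $(h_0)_u(0)=0$, $h_0(\rho_0)=0$; your sentence asserting that the hypotheses ``guarantee the requisite order of compatibility'' is exactly the unsupported step. Without it, your scheme yields estimates only on $[\varepsilon,T]$ or under extra unstated hypotheses, whereas the proposition (and the numerical analysis that consumes it) needs bounds up to $t=0$. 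A smaller inaccuracy: after two $u$-differentiations the remainder contains $h_uh_{uu}h_{uuu}$, i.e.\ a drift term $b\,w_u$ rather than terms controlled by the $W^{2,\infty}$ bound alone (harmless for the linear theory, but it should be stated as such), and the $L^P$ theory also needs some continuity of the coefficient $1/(1+h_u^2)$ in time, which has to be extracted from Theorem \ref{thmw2inf}.

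For contrast, the paper never invokes linear regularity up to the initial corner. It runs $L^{2N}$ energy estimates directly on time derivatives ($h_{tu}$, then $h_{tt}$, then $h_{ttu}$), repeatedly converting time derivatives into space derivatives through \eqref{eqgraph}, and closes with Young/interpolation and a Gronwall inequality. The only boundary input is that products such as $h_{tt}h_{tu}^{2N-1}$ vanish at both ends for $t>0$ (the Dirichlet condition kills $h_{tt}$, the Neumann condition kills $h_{tu}$), and the only initial input is finiteness of the corresponding norms of $h_0$, which $W^{5,P}$ supplies; this is why the stated hypotheses suffice without any corner compatibility. If you want to keep your approach, you must either add the compatibility conditions above (and their second-order analogues for the $W^{5,P}$ step) to the hypotheses, or replace the up-to-$t=0$ linear estimates with an energy/Gronwall argument of the paper's type.
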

  \begin{proof}%{{{
  {\bf Step 1:} $W^{3,P}$ control.
  We begin by noting
  \begin{align*}
  h_{tt} &= \bigg(\frac{h_{uu}}{1+h_u^2} + \frac1{L[h]}\frac{h_u}{1+h_u^2}\bigg)_{t}
  \\
  &=
    \frac{h_{tuu}}{1+h_u^2}
    + \bigg(2 \frac{h_uh_{uu}}{(1+h_u^2)^2} + \frac1L \Big(\frac{1}{1+h_u^2} - 2\frac{h_u}{(1+h_u^2)^2}\Big)\bigg)h_{tu}
    - \frac{L'}{L^2} \frac{h_u}{1+h_u^2}
  \\
  \,.
  \end{align*}
  Using the uniform $W^{2,\infty}$ estimate from Theorem \ref{thmw2inf} we obtain
  \begin{equation}
  \label{EQh1}
  h_{tt}
   =
    \frac{1}{1+h_u^2}h_{tuu}
    + f(h)(1 + h_{tu})
  \end{equation}
  where $f$ is a bounded function, satisfying $|f|\le C$ on a bounded time
  interval $[0,T]$, with $C=C(T,h_0)$.

  Since our boundary conditions imply $h_{tt}$ vanishes at the Dirichlet boundary
  and $h_{tu}$ vanishes on the Neumann boundary, the product $h_{tu}h_{ttu}$
  vanishes on both boundaries. 
  Therefore
  \begin{align*}
  \frac{d}{dt} \int_0^{\rho_0} h_{tu}^{2N}\,du
   &= 2N\int_0^{\rho_0} h_{ttu} h_{tu}^{2N-1}\,du
  \\
   &= -2N(2N-1)\int_0^{\rho_0} h_{tt} h_{tuu} h_{tu}^{2N-2}\,du
  \\
   &=
        -2N(2N-1)\int_0^{\rho_0} h_{tuu}
      \bigg(
      \frac{1}{1+h_u^2}h_{tuu}
      + f(u)(1 + h_{tu})
    \bigg)
     h_{tu}^{2N-2}\,du
  \\
  &\le -2\delta N(2N-1) \int_0^{\rho_0} h_{tuu}^2 h_{tu}^{2N-2}\,du
    - 2N(2N-1)\int_0^{\rho_0} f(u)h_{tuu}\,h_{tu}^{2N-2}\,du
  \\&\qquad
    - 2N(2N-1)\int_0^{\rho_0} f(u)h_{tuu}\,h_{tu}^{2N-1}\,du
  \\
  &\le -\delta N(2N-1) \int_0^{\rho_0} h_{tuu}^2 h_{tu}^{2N-2}\,du
    + C_\delta N^2\int_0^{\rho_0} h_{tu}^{2N} + h_{tu}^{2N-2}\,du
  \,.
  \end{align*}
  On the last term we use the Young inequality to estimate
  \[
    \int_0^{\rho_0} h_{tu}^{2N-2}\,du
  \le C + C\int_0^{\rho_0} h_{tu}^{2N}\,du
  \,.
  \]
  Thus
  \begin{align*}
  \frac{d}{dt} \int_0^{\rho_0} h_{tu}^{2N}\,du
  &\le 
          C\int_0^{\rho_0} h_{tu}^{2N}\,du + C
  \,.
  \end{align*}
  This implies that for every $N\in\N$ we have
  \[
  \sup_{t\in[0,T]} \vn{h}_{W^{3,2N}}(t) \le C
  \]
  where $C$ depends only on $h_0$, $T$ and $N$.
  Note that we can not take $N\rightarrow\infty$; instead, we shall use this to next obtain a $W^{4,P}$ estimate.

  {\bf Step 2:} $W^{4,P}$ control.
  We begin by differentiating
  \[
  \frac{d}{dt} \int_0^{\rho_0} h_{tt}^{2N}\,du = 2N\int_0^{\rho_0} h_{t^3}h_{tt}^{2N-1}\,du
  \,.
  \]
  Now we wish to convert $h_{t^3}$ to $h_{ttuu}$ plus lower order terms:
  \begin{align}
    \label{EQh2}
  h_{t^3} &= \bigg(\frac{h_{uu}}{1+h_u^2} + \frac1{L[h]}\frac{h_u}{1+h_u^2}\bigg)_{tt}
  \\
  &=
    \bigg(\frac{h_{tuu}}{1+h_u^2}
    + \bigg(2 \frac{h_uh_{uu}}{(1+h_u^2)^2} + \frac1L \Big(\frac{1}{1+h_u^2} - 2\frac{h_u}{(1+h_u^2)^2}\Big)\bigg)h_{tu}
    - \frac{L'}{L^2} \frac{h_u}{1+h_u^2}
    \bigg)_t
  \\
  &=
    \frac{h_{ttuu}}{1+h_u^2}
    + f(u)\Big(
      h_{tu}h_{tuu}
      + h_{ttu}
      + h_{tu}^2
      + 1
    \Big)
  \,,
  \end{align}
  where again $f$ is a bounded function. Note that we can replace $h_{tuu}$ by $h_{tt}$ using \eqref{EQh1}.

  We have a number of additional terms to deal with.
  We continue with
  \begin{align*}
  \frac{d}{dt} \int_0^{\rho_0} h_{tt}^{2N}\,du
   &= 2N\int_0^{\rho_0} h_{tt}^{2N-1}
      \bigg(
    \frac{h_{ttuu}}{1+h_u^2}
    + f(u)\Big(
      h_{tu}h_{tt}
      + h_{ttu}
      + h_{tu}^2
      + 1
    \Big)
      \bigg)\,du
  \\
   &\le -\delta N(2N-1)\int_0^{\rho_0} h_{ttu}^2h_{tt}^{2N-2}\,du
    + 2NC\int_0^{\rho_0} 
    h_{tt}^{2N}|h_{tu}|
    \,du
  \\&\qquad
    + C_\delta N^2\int_0^{\rho_0} 
    h_{tt}^{2N}
    \,du
    + 2NC\int_0^{\rho_0} 
    h_{tt}^{2N}
    \,du
    + 2NC\int_0^{\rho_0} 
    h_{tu}^P
    \,du
    + C
  \\
   &\le -\delta N(2N-1)\int_0^{\rho_0} h_{ttu}^2h_{tt}^{2N-2}\,du
    + 2NC\int_0^{\rho_0} 
    h_{tt}^{2N}|h_{tu}|
    \,du
  %\\&\qquad
    + C\int_0^{\rho_0} 
    h_{tt}^{2N+1}
    \,du
  %  + C\int_0^{\rho_0} 
  %	h_{tt}^{2N+1}
  %	\,du
    + C
  \,,
  \end{align*}
  where in the last step we used the $W^{3,P}$ estimates from above (valid for every $P$) and the Young inequality, for instance:
  \[
  h_{tt}^{2N}|h_{tu}| \le \varepsilon h_{tt}^{2N+1} + C_\varepsilon h_{tu}^{2N+1}
  \,.
  \]
  Note that we do not need to take $\varepsilon$ small here.

  Let us note the following interpolation procedure:
  \begin{align*}
  \int h_{tu}^Ph_{tt}^{Q}\,du
  &\le \int h_{tu}^Ph_{tuu}h_{tt}^{Q-1}(1+h_u^2)\,du
    + C\int h_{tu}^P (1+|h_{tu}|)h_{tt}^{Q-1}\,du
  \\
  &= \frac1{P+1}\int (h_{tu}^{P+1})_u h_{tt}^{Q-1}(1+h_u^2)\,du
    + C\int 
      h_{tu}^Ph_{tt}^{Q-2}
      + h_{tu}^{P+1}h_{tt}^{Q-2}
      \,du
  \\
  &=
    - \frac{Q-1}{P+1}\int h_{tu}^{P+1} h_{ttu} h_{tt}^{Q-2}(1+h_u^2)\,du
    - \frac2{P+1}\int h_{tu}^{P+1} h_{tt}^{Q-1}h_uh_{uu}\,du
  \\&\qquad
    + C\int 
      h_{tu}^Ph_{tt}^{Q-2}
      + h_{tu}^{P+1}h_{tt}^{Q-2}
      \,du
  \\
  &\le
      C\int h_{tu}^{P+1} |h_{ttu}|\,h_{tt}^{N-1+Q-N-1}\,du
  \\&\qquad
    + C\int 
      h_{tu}^{P+1}h_{tt}^{Q-1}
      + h_{tu}^Ph_{tt}^{Q-2}
      + h_{tu}^{P+1}h_{tt}^{Q-2}
      \,du
  \\
  &\le 
       \delta \int h_{ttu}^2h_{tt}^{2N-2}\,du
  \\&\qquad
   + C\int h_{tu}^{2P+2}h_{tt}^{2Q-2N-2}\,du
   + \frac12\int h_{tu}^Ph_{tt}^{Q}\,du
  \\&\qquad
    + C\int 
      (h_{tu}^{P}
      + h_{tu}^{P+1}
      + h_{tu}^{P+2})h_{tt}^{Q-2}
      \,du
  \,.
  \end{align*}
  There are no boundary terms in the above because $h_{tu}h_{tt}$ vanishes at both boundary points.
  Note also that we freely used the $W^{2,\infty}$ estimates.

  Absorbing, we find
  \[
  \int h_{tu}^Ph_{tt}^{Q}\,du
  \le 
       \delta \int h_{ttu}^2h_{tt}^{2N-2}\,du
   + C\int h_{tu}^{2P+2}h_{tt}^{2Q-2N-2}\,du
    + C\int 
      (h_{tu}^{P}
      + h_{tu}^{P+2})h_{tt}^{Q-2}
      \,du
  \]
  Applying this for $Q = 2N+1$, we see that the RHS involves products of powers of $h_{tu}$ and at most the $(2N-1)$-th power of $h_{tt}$.
  Using again Young's inequality and the $W^{3,P}$ estimates, we finally discover that
  \[
  \frac{d}{dt} \int_0^{\rho_0} h_{tt}^{2N}\,du
  \le C\int_0^{\rho_0} h_{tt}^{2N}\,du + C
  \]
  which
  implies that for every $N\in\N$ we have
  \[
  \sup_{t\in[0,T]} \vn{h}_{W^{4,2N}}(t) \le C
  \]
  where $C$ depends only on $h_0$, $T$ and $N$.

  {\bf Step 3:} $W^{5,P}$ control.
  We begin by noting that the $W^{4,P}$ estimates yield a \emph{uniform} $W^{3,\infty}$ estimate, which means that now
  \begin{align*}
  h_{ttt} &= \bigg(\frac{h_{uu}}{1+h_u^2} + \frac1{L[h]}\frac{h_u}{1+h_u^2}\bigg)_{tt}
  \\
  &=
    \bigg(
    \frac{h_{tuu}}{1+h_u^2}
    + \bigg(2 \frac{h_uh_{uu}}{(1+h_u^2)^2} + \frac1L \Big(\frac{1}{1+h_u^2} - 2\frac{h_u}{(1+h_u^2)^2}\Big)\bigg)h_{tu}
    - \frac{L'}{L^2} \frac{h_u}{1+h_u^2}
    \bigg)_t
  \\
  &= \frac{h_{ttuu}}{1+h_u^2} + f(h)(1 + h_{ttu} + h_{tuu})
  \,,
  \end{align*}
  where $f$ is a bounded function.

  Using this we calculate
  \begin{align*}
  \frac{d}{dt} \int_0^{\rho_0} h_{ttu}^{2N}\,du
  &= -2N(2N-1) \int_0^{\rho_0} h_{ttt} h_{ttuu} h_{ttu}^{2N-2}\,du
  \\
  &= -2N(2N-1) \int_0^{\rho_0} h_{ttuu} h_{ttu}^{2N-2}
        \bigg(
        \frac{h_{ttuu}}{1+h_u^2} + f(h)(1 + h_{ttu} + h_{tuu})
        \bigg)\,du
  \\
  &\le -2N(2N-1)\delta \int_0^{\rho_0} h_{ttuu}^2 h_{ttu}^{2N-2} \,du
  \\&\qquad
     + 2CN(2N-1) \int_0^{\rho_0} h_{ttuu} h_{ttu}^{2N-2}
        (1 + |h_{ttu}| + |h_{tuu}|)
        \,du
  \\
  &\le -N(2N-1)\delta \int_0^{\rho_0} h_{ttuu}^2 h_{ttu}^{2N-2} \,du
  \\&\qquad
     + 2CN(2N-1) \int_0^{\rho_0} 
        (1 + h_{ttu}^2 + h_{tuu}^2)
        h_{ttu}^{2N-2}
        \,du
  \,.
  \end{align*}
  Employing again the Young inequality we estimate
  \[
       2CN(2N-1) \int_0^{\rho_0} 
        (1 + h_{ttu}^2 + h_{tuu}^2)
        h_{ttu}^{2N-2}
        \,du
  \le
    \int_0^{\rho_0} h_{ttu}^{2N}\,du
    + \int_0^{\rho_0} h_{tuu}^{N}\,du
    + C
  \le
    \int_0^{\rho_0} h_{ttu}^{2N}\,du
    + C
  \]
  which finally yields
  \begin{align*}
  \frac{d}{dt} \int_0^{\rho_0} h_{ttu}^{2N}\,du
  \le C\int_0^{\rho_0} h_{ttu}^{2N}\,du + C
  \end{align*}
  and the desired $W^{5,P}$ estimate for any $P$.
  This in turn yields uniform $W^{4,\infty}$ control, and the proof of the proposition is ended.
  \end{proof}%}}}

  The technique used in the previous proof for estimates in $W^{4,P}$ and
  $W^{5,P}$ can be iterated, yielding the following analogous higher-order
  statement.

  \begin{prop}\label{propwkp}
  Let $h_0\in W^{K,P}([0,{\rho_0}])$, $K\ge3$, $P\ge2$, with $(h_0)_u(0) = 0$ and $h_0({\rho_0}) = 0$.
  Then, the weak solution generated by Theorem \ref{thmw2inf} above satisfies
  \[
    \vn{h}_{W^{K-1,\infty}([0,{\rho_0}]))} \le C(h_0,\rho_0,T)
  \]
  and
  \[
    \vn{h}_{W^{K,P}([0,{\rho_0}]))} \le C(h_0,\rho_0,T,P)
    \,.
  \]
  \end{prop}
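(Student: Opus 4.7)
The plan is to argue by induction on $K$, using Proposition \ref{propw52} to cover the base cases. Indeed, Steps 1, 2, and 3 of the preceding proof establish the conclusion of Proposition \ref{propwkp} for $K\in\{3,4,5\}$, each step requiring only the regularity of $h_0$ that the corresponding instance of Proposition \ref{propwkp} furnishes. The inductive step, from $K$ to $K+1$ for $K\ge 5$, is where the main work lies.

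The organising principle visible in the preceding proof is parabolic scaling: one time derivative counts as two spatial derivatives. To bound $K+1$ spatial derivatives in $L^P$ I would therefore track, via an $L^{2N}$ energy method, the mixed derivative
\[
w := \partial_t^m \partial_u^i h,\qquad 2m+i=K+1,\quad i\in\{0,1\},
\]
so that $w = \partial_t^{(K+1)/2}h$ when $K+1$ is even and $w = \partial_t^{K/2}h_u$ when $K+1$ is odd. Differentiating the evolution equation \eqref{eqgraph} $m$ times in $t$ and $i$ times in $u$ and collecting the highest-order term yields the schematic form
\[
w_t = \frac{w_{uu}}{1+h_u^2} + R[h],
\]
where $R[h]$ is a polynomial in derivatives $\partial_t^{m'}\partial_u^{i'}h$ of parabolic weight $2m'+i' \le K+1$, all of which are controlled in $L^\infty$ by the inductive hypothesis applied at level $K$.

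The core calculation is then: differentiate $\int_0^{\rho_0} w^{2N}\,du$ in time, substitute the evolution equation for $w_t$, and integrate the leading parabolic term by parts. Boundary contributions vanish because $w$ inherits a Dirichlet condition at $u=\rho_0$ whenever $m\ge 1$ (from differentiating $h(\rho_0,t)=0$ in time) and a Neumann condition at $u=0$ whenever $i\ge 1$ (from differentiating $h_u(0,t)=0$ in $u$), so the product $w\,w_u$ always vanishes at both endpoints. The uniform gradient bound from Theorem \ref{graphpreservation} provides $\frac{1}{1+h_u^2}\ge \delta>0$, extracting the coercive term $-\delta N(2N-1)\int w_u^2 w^{2N-2}\,du$. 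The remaining $R$-terms are absorbed using Young's inequality together with the inductive $W^{K,\infty}$ bounds, after which Gronwall closes the estimate and yields $\sup_{[0,T]}\vn{w}_{L^{2N}}\le C$ for every $N$. This in turn gives $\vn{h}_{W^{K+1,P}}\le C$ for every $P\ge 2$, and the one-dimensional Sobolev embedding $W^{K+1,P}\hookrightarrow W^{K,\infty}$ then delivers the $W^{K,\infty}$ bound.

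The main obstacle is controlling the near-critical terms in $R[h]$, namely products of the form $(\partial_t^{m_1}\partial_u^{i_1}h)^P (\partial_t^{m_2}\partial_u^{i_2}h)^Q$ whose combined parabolic weight approaches that of $w_{uu}$. These are handled exactly as in Step 2 of Proposition \ref{propw52}, by an integration by parts that trades one factor of the top-weight term for a single higher spatial derivative against a lower power, followed by Young's inequality to absorb the result into the coercive term $\int w_u^2 w^{2N-2}\,du$. The book-keeping grows with $K$ but is entirely mechanical: no new boundary phenomena arise thanks to the alternating Dirichlet/Neumann structure enforced by the boundary conditions, and no technique beyond those already deployed in Proposition \ref{propw52} is required.
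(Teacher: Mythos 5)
Your strategy is the paper's own: induct on the parabolic order, split into the two parity cases via $w=\partial_t^m\partial_u^i h$ with $2m+i=K+1$, $i\in\{0,1\}$, run an $L^{2N}$ energy estimate with coercive term $\int w_u^2w^{2N-2}\,du$ coming from the graphicality bound, absorb the remainder with Young's inequality and the previous step's estimates, apply Gronwall and integrate from $t=0$ (this is where the $W^{K,P}$ regularity of $h_0$ enters), and finish with the one-dimensional embedding for the $W^{K-1,\infty}$ bound. This is exactly what the paper means by ``follow Cases 1 and 2 of Theorem \ref{thmallest} up to \eqref{EQcase1} and \eqref{EQcase2} and integrate from zero to $t$.''

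There is, however, a concrete error in your boundary-term step in the odd case $i=1$, $w=h_{t^m u}$. Differentiating $h(\rho_0,t)=0$ in time only forces the \emph{pure} time derivatives $h_{t^k}(\rho_0,t)$ to vanish; it says nothing about $h_{t^m u}(\rho_0,t)$ or $h_{t^m uu}(\rho_0,t)$, and one cannot ``differentiate $h_u(0,t)=0$ in $u$'' at all. Consequently, in your integration by parts of $\int \frac{w_{uu}}{1+h_u^2}\,w^{2N-1}\,du$ the boundary contribution $\frac{w_u\,w^{2N-1}}{1+h_u^2}\big|_{u=\rho_0}$ does \emph{not} vanish: at the Dirichlet end neither $w=h_{t^m u}$ nor $w_u=h_{t^m uu}$ is zero in general. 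The paper sidesteps this by ordering the operations differently: write $\frac{d}{dt}\int h_{t^m u}^{2N}\,du = 2N\int \partial_u\big(h_{t^{m+1}}\big)h_{t^m u}^{2N-1}\,du$ and integrate by parts \emph{before} substituting the $u$-differentiated equation, so the boundary product is $h_{t^{m+1}}h_{t^m u}^{2N-1}$, which vanishes because the pure time derivative $h_{t^{m+1}}$ is zero at $u=\rho_0$ and $h_{t^m u}$ is zero at $u=0$; only then is $h_{t^{m+1}}=\frac{h_{t^m uu}}{1+h_u^2}+f(\cdots)$ inserted, giving the same coercive term with no boundary residue. (Your version could be repaired by using $h_{t^{m+1}}(\rho_0,t)=0$ to express $w_u(\rho_0,t)$ through $w(\rho_0,t)$ and then a trace/absorption argument, but you do not supply this.) A secondary imprecision: the level-$K$ inductive hypothesis gives $W^{K-1,\infty}$ plus $W^{K,P}$ bounds, not the ``inductive $W^{K,\infty}$ bounds'' you invoke, so the terms in $R[h]$ of parabolic weight $K$ and $K+1$ are not in $L^\infty$; they must be kept linearly with bounded coefficients and handled by Young's inequality against $\int w^{2N}\,du$ together with the previous step's $L^{2Q}$ estimates, exactly as in \eqref{EQcase1} and \eqref{EQcase2}.
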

  \begin{proof}
  For the proof, follow the proof of Theorem \ref{thmallest} of Case 1 up to
  \eqref{EQcase1} then integrate from zero to $t$; and for Case 2 similarly up to
  \eqref{EQcase2} then integrate from zero to $t$.
  \end{proof}

\section{Convergence and smoothing}

In order to conclude the convergence, we use the exponential decay of area.
The idea is to combine this exponential decay with controlled growth of other
derivatives and a standard interpolation inequality.

For this control on higher derivatives, the key point (in contrast to the
situation in Propositions \ref{propw52} and \ref{propwkp}) is that we only require
these to hold for \emph{large time}.
This means that we can get away with requiring much weaker regularity on our
initial data for these estimates.
This is a manifestation of the \emph{smoothing effect} for the flow, outlined
in the following theorem.

\begin{thm}
\label{thmallest}
Let $h_0\in W^{2,\infty}([0,{\rho_0}])$ with $(h_0)_u(0) = 0$ and $h_0({\rho_0}) = 0$.
Then \eqref{eqgraph} admits a smooth classical solution $h:[0,{\rho_0}]\times(0,\infty)\rightarrow\R$.
Furthermore, all derivatives of the graph function obey the following estimate:
\[
\vn{\partial_t^p \partial_u^q h}_2^2(t) \le C(p,q,h_0)t\,,\qquad\text{ for $t>1$}\,.
\]
\end{thm}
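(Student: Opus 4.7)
The plan is to bootstrap regularity from the $W^{2,\infty}$ bound of Theorem \ref{thmw2inf} using time-weighted $L^{2N}$ energy estimates, which exhibit the standard parabolic smoothing effect. I would work with the smooth approximants $h_n$ from Proposition \ref{provo}, derive uniform estimates on each $\partial_u^q h_n$ (and then each $\partial_t^p \partial_u^q h_n$) on $(0,T]$, and pass to the limit. The point of inserting time weights is that the initial datum $h_0$ only has $W^{2,\infty}$ regularity, so high-order norms may well blow up as $t\downarrow 0$; multiplying by an appropriate power of $t$ absorbs this singularity and leaves a Grönwall-type inequality which can be integrated from $0$ to $t$.

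\textbf{Case 1 (spatial derivatives).} Inductively, assume a $W^{k-1,\infty}$ estimate. I would compute $\frac{d}{dt}\int_0^{\rho_0}(\partial_u^k h)^{2N}\,du$ using \eqref{eqgraph} and integrate by parts; the boundary terms vanish because differentiating the boundary conditions $h_u(0,t)=0$ and $h(\rho_0,t)=0$ in $t$ forces enough derivatives to be zero on $\{0,\rho_0\}$. The leading contribution is a good negative term $-\delta\int(\partial_u^{k+1}h)^2(\partial_u^k h)^{2N-2}\,du$, and the remaining reaction terms are polynomial in lower-order spatial derivatives and in $1/L[h]$; invoking the inductive $W^{k-1,\infty}$ bound and Young's inequality, they are absorbed into the good term and into $C\int(\partial_u^k h)^{2N}\,du$. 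Introducing the time weight $t^\alpha$ for $\alpha$ large enough produces $\frac{d}{dt}(t^\alpha E_{k,N}) \le C\,t^\alpha E_{k,N} + C\,t^{\alpha-1}$, which on $(0,t)$ Grönwall/integration bounds by $C$; the bound $\vn{\partial_u^k h}_2^2 \le Ct$ for $t>1$ is then an immediate consequence, and the one-dimensional Sobolev embedding promotes this to the $W^{k,\infty}$ estimate needed for the next induction step. \textbf{Case 2 (mixed derivatives).} The PDE lets us trade each $\partial_t$ for $\partial_u^2$ modulo bounded lower-order terms, exactly as in \eqref{EQh1}–\eqref{EQh2} used in the proof of Proposition \ref{propw52}. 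Iterating, any $\partial_t^p \partial_u^q h$ is a polynomial in pure spatial derivatives of order at most $2p+q$ with coefficients controlled by previously established $W^{m,\infty}$ bounds, so its $L^2$ estimate follows from Case 1.

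The main obstacle is the non-local factor $1/L[h]$, whose time derivative $L'[h]$ is, by Lemma \ref{lmlengthevo}, only bounded in $L^1_t$ rather than pointwise in $t$. Any term carrying an $L'/L^2$ factor must be integrated in time using the global bound $\int_0^T\int_\gamma k^2\,ds\,dt \le L(0)$, exactly as in the proof of Theorem \ref{thmw2inf}. Calibrating the exponent $\alpha$ of the time weight so that it simultaneously absorbs the blow-up at $t=0$ and tolerates this $L^1_t$ coefficient is the delicate part; once it is fixed at the base step of Case 1, the induction proceeds routinely, and Case 2 then completes the proof. Finally, smooth classical existence on $[0,\rho_0]\times(0,\infty)$ follows from the uniform $C^\infty$ estimates on $(\varepsilon,T]$ for every $\varepsilon,T>0$ via the Arzel\`a–Ascoli argument inherent in the passage $h_n \to h$.
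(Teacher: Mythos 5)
There is a genuine gap, and it occurs at the very first step of your Case 1. You propose to run the $L^{2N}$ energy estimates on the \emph{pure spatial} derivatives $\partial_u^k h$ and claim that the boundary terms in the integration by parts vanish ``because differentiating the boundary conditions $h_u(0,t)=0$ and $h(\rho_0,t)=0$ in $t$ forces enough derivatives to be zero on $\{0,\rho_0\}$''. Time-differentiating the boundary conditions only gives vanishing of $\partial_t^p h_u(0,t)$ and of $\partial_t^p h(\rho_0,t)$; it gives no homogeneous condition on $h_{uu}$, $h_{uuu}$, \dots at either endpoint (for instance, at $u=\rho_0$ the equation \eqref{eqgraph} forces $h_{uu}=-h_u/L\neq0$ in general). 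So the boundary terms of the form $\bigl[\partial_u^{k+1}h\,(\partial_u^k h)^{2N-1}\bigr]_0^{\rho_0}$ do not drop out, and your induction on spatial derivatives does not close. This is exactly why the paper's proof estimates the quantities $h_{t^P}$ and $h_{t^Pu}$ (Cases 2 and 1 there, respectively): the time-differentiated boundary conditions make products such as $h_{t^Pu}h_{t^P}$ vanish at both endpoints, the integrations by parts are clean, and pure spatial derivatives are only recovered at the end by trading time derivatives for space derivatives through the PDE — i.e.\ the trade is used in the opposite direction to yours.

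A second, quantitative problem is the claimed estimate itself. Your weighted Grönwall inequality $\frac{d}{dt}(t^\alpha E)\le C\,t^\alpha E + C\,t^{\alpha-1}$ integrates to an \emph{exponential}-in-time bound $E(t)\le Ce^{Ct}$, not the linear bound $\vn{\partial_t^p\partial_u^q h}_2^2(t)\le Ct$ asserted in the theorem. The paper obtains linearity by an extra interpolation step (the analogue of \eqref{EQinterp}) that absorbs the reaction term $C\int h_{t^Pu}^{2Q}\,du$ into the dissipative term $\int h_{t^Puu}^2h_{t^Pu}^{2Q-2}\,du$ and into lower-order quantities already bounded by the previous step, reducing \eqref{EQcase1} to $\frac{d}{dt}\int h_{t^Pu}^{2Q}\,du\le C$, whose integration gives $Ct$. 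Note also that the paper does not use time weights at all: the smoothing is extracted by integrating the dissipative term in time (estimate \eqref{theest}), concluding that the next-order norm is finite at almost every $\varepsilon_N>0$, and restarting the estimate from such a time. Your time-weight idea is a legitimate alternative mechanism in principle, but as written it neither closes the induction (boundary terms) nor yields the stated growth rate. Finally, your worry about $L'[h]$ being only $L^1$ in time is misplaced at this stage: once Theorem \ref{thmw2inf} gives uniform $W^{2,\infty}$ control, $|L'|\le\int_\gamma k^2\,ds$ is pointwise bounded, which is why the paper simply sweeps these terms into the bounded coefficient $f(h)$.
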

\begin{proof}
First, Theorem \ref{thmw2inf} yields the existence of a $W^{2,\infty}$ solution.
In this proof we must control all higher derivatives of $h$.

We shall use induction.
When considering estimates (and existence of) derivatives of order $N+1$, we will be finished if we can deal with the following two cases:
\[
\text{{\bf Case 1}: $N$ is even}\,,\quad\text{and}\quad
\text{{\bf Case 2}: $N$ is odd}\,.
\]
Note that we may always trade one time derivative for two space derivatives plus a lower order term (using the PDE \eqref{eqgraph}).
This means that we will be finished if we can obtain existence and control of:
\begin{itemize}
\item[{\bf (Case 1)}] $h_{t^pu}$ for $p = N/2$;
\item[{\bf (Case 2)}] $h_{t^p}$ for $p = (N+1)/2$.
\end{itemize}
Each case is slightly different, with the general strategy following the proof of Proposition \ref{propw52}. We will endeavour to build on this proof and not spend too much time repeating ideas.

Note that our previous work already takes us to estimates of order 5 (so $N=5$).
We proceed from this case.

{\bf Case 1.} $N$ is even.

Set $P=N/2$. We begin by noting that the $W^{N,P}$ estimates yield a \emph{uniform} $W^{N-1,\infty}$ estimate, which means that we have uniform estimates on
\[
	h_{t^ku^l}
\]
for all $k$, $l$ such that $2k + l \le N-1$.
Using this, we find
\begin{align}
h_{t^{P+1}} &= \bigg(\frac{h_{uu}}{1+h_u^2} + \frac1{L[h]}\frac{h_u}{1+h_u^2}\bigg)_{t^{P}}
\notag
\\
&=
	\bigg(
	\frac{h_{tuu}}{1+h_u^2}
	+ \bigg(2 \frac{h_uh_{uu}}{(1+h_u^2)^2} + \frac1L \Big(\frac{1}{1+h_u^2} - 2\frac{h_u}{(1+h_u^2)^2}\Big)\bigg)h_{tu}
	- \frac{L'}{L^2} \frac{h_u}{1+h_u^2}
	\bigg)_{t^{P-1}}
\notag
\\
&= \frac{h_{t^Puu}}{1+h_u^2} + f(h)(1 + h_{t^Pu} + h_{t^P})
\label{EQhtp}
\,,
\end{align}
where $f$ is a bounded function.
Therefore
\begin{align*}
\frac{d}{dt} \int_0^{\rho_0} h_{t^{P}u}^{2Q}\,du
&= -2Q(2Q-1) \int_0^{\rho_0} h_{t^{P+1}} h_{t^Puu} h_{t^Pu}^{2Q-2}\,du
\\
&= -2Q(2Q-1) \int_0^{\rho_0} h_{t^Puu} h_{t^Pu}^{2Q-2}
			\bigg(
			\frac{h_{t^Puu}}{1+h_u^2} + f(h)(1 + h_{t^Pu} + h_{t^P})
			\bigg)\,du
\\
&\le -2Q(2Q-1)\delta \int_0^{\rho_0} h_{t^Puu}^2 h_{t^Pu}^{2Q-2} \,du
\\&\qquad
   + 2CQ(2Q-1) \int_0^{\rho_0} h_{t^Puu} h_{t^Pu}^{2Q-2}
			(1 + |h_{t^Pu}| + |h_{t^P}|)
			\,du
\\
&\le -Q(2Q-1)\delta \int_0^{\rho_0} h_{t^Puu}^2 h_{t^Pu}^{2Q-2} \,du
\\&\qquad
   + 2CQ(2Q-1) \int_0^{\rho_0} 
			(1 + h_{t^Pu}^2 + h_{t^P}^2)
 			h_{t^Pu}^{2Q-2}
			\,du
\,.
\end{align*}
Young's inequality and $W^{N,Q}$-estimates imply
\[
     \int_0^{\rho_0} 
			(1 + h_{t^Pu}^2 + h_{t^P}^2)
 			h_{t^Pu}^{2Q-2}
			\,du
\le
	C\int_0^{\rho_0} h_{t^Pu}^{2Q}\,du
	+ C\int_0^{\rho_0} h_{t^P}^{Q}\,du
	+ C
\le
	C\int_0^{\rho_0} h_{t^Pu}^{2Q}\,du
	+ C
\]
which finally yields
\begin{align}
\label{EQcase1}
\frac{d}{dt} \int_0^{\rho_0} h_{t^Pu}^{2Q}\,du
	+ \int_0^{\rho_0} h_{t^Puu}^2 h_{t^Pu}^{2Q-2} \,du
\le C\int_0^{\rho_0} h_{t^Pu}^{2Q}\,du + C
\,.
\end{align}
Integrating this from zero to $t$ yields the estimate needed for Proposition \ref{propwkp}; here, we instead interpolate further on the RHS:
\begin{align*}
\int_0^{\rho_0} h^{2Q}_{t^Pu}\,du
	&= (2Q-1)\int_0^{\rho_0} h_{t^Puu}h_{t^P}h_{t^Pu}^{2Q-2}\,du
\\
	&\le \frac{\varepsilon}{2}\int_0^{\rho_0} h_{t^Puu}^2h_{t^Pu}^{2Q-2}\,du
	 + \frac{(2Q-1)^2}{2\varepsilon}\int_0^{\rho_0} h_{t^P}^2h_{t^Pu}^{2Q-2}\,du
\\
	&\le \frac{\varepsilon}{2}\int_0^{\rho_0} h_{t^Puu}^2h_{t^Pu}^{2Q-2}\,du
	 + \frac12\int_0^{\rho_0} h_{t^Pu}^{2Q}\,du
%	 + \frac{(2Q-1)^Q}{4^Q\varepsilon^Q}\frac{2^{\frac{2Q^2}{2Q-2}}}{Q}\Big(\frac{2Q-2}{2Q}\Big)^Q\int_0^{\rho_0} h_{t^P}^{2Q}\,du eps to eps/2
 	 + \frac{C_Q}{\varepsilon^Q}\int_0^{\rho_0} h_{t^P}^{2Q}\,du
\,.
\end{align*}
Note that in the last step we used Young's inequality.
Absorbing yields
\begin{equation}
\label{EQinterp}
\int_0^{\rho_0} h^{2Q}_{t^Pu}\,du
	\le \varepsilon\int_0^{\rho_0} h_{t^Puu}^2h_{t^Pu}^{2Q-2}\,du
 	 + \frac{C_Q}{\varepsilon^Q}\int_0^{\rho_0} h_{t^P}^{2Q}\,du
\,.
\end{equation}
Using \eqref{EQinterp} in \eqref{EQcase1} then absorbing again yields
\[
\frac{d}{dt} \int_0^{\rho_0} h_{t^Pu}^{2Q}\,du
	+ \frac12\int_0^{\rho_0} h_{t^Puu}^2 h_{t^Pu}^{2Q-2} \,du
\le C
\,.
\]
Integrating yields
\begin{equation}
	\label{theest}
\int_0^{\rho_0} h_{t^Pu}^{2Q}\,du
	+ \frac12\int_{\varepsilon_N}^{\hat t}\int_0^{\rho_0} h_{t^Puu}^2 h_{t^Pu}^{2Q-2} \,du\,dt
\le C\hat t + \int_0^{\rho_0} h_{t^Pu}^{2Q}\,du\bigg|_{t=\varepsilon_N}
\,.
\end{equation}
Note that the inductive hypothesis implies that for all $\varepsilon_N>0$, $\int_0^{\rho_0} h_{t^Pu}^{2}\,du\bigg|_{t=\varepsilon_N}$ exists and is bounded (this is the $Q=1$ case of the \emph{previous step}).

Using estimate \eqref{theest} yields that the integral
\[
\int_0^{\rho_0} h_{t^Puu}^2 \,du
\]
instantaneously exists and is bounded. This implies that
\[
\int_0^{\rho_0} h_{t^{P}u}^{2Q} \,du\,,\quad
\text{ and }
\int_0^{\rho_0} h_{t^{P+1}}^2 \,du
\]
exist and are bounded for each $t>0$ by a constant that (possibly) blows up as $t\searrow0$.
Therefore $\int_0^{\rho_0} h_{t^Pu}^{2Q}\,du\bigg|_{t=\varepsilon_N}$ for each $Q$ and $\varepsilon_N>0$ exists and is bounded.
This yields immediately the linear in time estimate claimed for estimates of order $2Q=N$,
and the requirements of the proceeding inductive step.

{\bf Case 2.} $N$ is odd.

Set $P=(N+1)/2$. As before, we begin by noting that the $W^{N,P}$ estimates yield a \emph{uniform} $W^{N-1,\infty}$ estimate, which means that we have uniform estimates on
\[
	h_{t^ku^l}
\]
for all $k$, $l$ such that $2k + l \le N-1$.
We shall use a version of \eqref{EQhtp} as before, but this time our induction hypothesis is that we have estimates of one order less. This means that we need to consider an extra term:
%
% t^{P-1}u is in L^Q!
%
\begin{align*}
h_{t^{P+1}}
&=
	\bigg(
	\frac{h_{tuu}}{1+h_u^2}
	+ \bigg(2 \frac{h_uh_{uu}}{(1+h_u^2)^2} + \frac1L \Big(\frac{1}{1+h_u^2} - 2\frac{h_u}{(1+h_u^2)^2}\Big)\bigg)h_{tu}
	- \frac{L'}{L^2} \frac{h_u}{1+h_u^2}
	\bigg)_{t^{P-1}}
\\
&= \frac{h_{t^Puu}}{1+h_u^2} + f(h)(1 + h_{t^Pu} + h_{t^P} + h_{t^{P-1}u})
\,,
\end{align*}
where as before $f$ is a bounded function.

Calculating,
\begin{align*}
\frac{d}{dt} \int_0^{\rho_0} h_{t^{P}}^{2Q}\,du
&= 2Q\int_0^{\rho_0} h_{t^{P+1}} h_{t^P}^{2Q-1}\,du
\\
&= 2Q\int_0^{\rho_0} \frac{h_{t^Puu}}{1+h_u^2}h_{t^P}^{2Q-1}\,du
\\&\qquad
 + 2Q\int_0^{\rho_0} f(h)(1 + h_{t^Pu} + h_{t^P} + h_{t^{P-1}u})h_{t^P}^{2Q-1}\,du
\\
&= -2Q(2Q-1)\int_0^{\rho_0} \frac{h_{t^Pu}^2}{1+h_u^2}h_{t^P}^{2Q-2}\,du
\\&\qquad
 + 2Q\int_0^{\rho_0} \hat f(h)(1 + h_{t^Pu} + h_{t^P} + h_{t^{P-1}u})h_{t^P}^{2Q-1}\,du
\\
&\le -2Q(2Q-1)\delta \int_0^{\rho_0} h_{t^Pu}^2 h_{t^P}^{2Q-2} \,du
\\&\qquad
   + 2CQ(2Q-1) \int_0^{\rho_0} |h_{t^Pu}|\, |h_{t^P}|^{2Q-1}
                             + h_{t^P}^{2Q}
                             + |h_{t^P}|^{2Q-1} + |h_{t^{P-1}u}|\,|h_{t^P}|^{2Q-1}
			\,du
\\
&\le -Q(2Q-1)\delta \int_0^{\rho_0} h_{t^Pu}^2 h_{t^P}^{2Q-2} \,du
   + C\int_0^{\rho_0} h_{t^P}^{2Q}
			\,du
	+ C
\,.
\end{align*}
That is,
\begin{align}
\label{EQcase2}
\frac{d}{dt} \int_0^{\rho_0} h_{t^P}^{2Q}\,du
	+ \int_0^{\rho_0} h_{t^Pu}^2 h_{t^P}^{2Q-2} \,du
\le C\int_0^{\rho_0} h_{t^P}^{2Q}\,du + C
\,.
\end{align}
Integrating this from zero to $t$ yields the estimate needed for Proposition \ref{propwkp}.
As in Case 1, we need to instead interpolate further on the RHS; this time, we
need to first introduce some space derivatives, and the estimate becomes more
complicated.
This is done by again another version of \eqref{EQhtp}; recall the uniform $W^{N-1,\infty}$ estimate allows us to put any term with a highest order derivative of $(2P-2)$ or less in the function $f$:
\[
h_{t^P}
 = \frac{h_{t^{P-1}uu}}{1+h_u^2}
	+ f(h)(1 + h_{t^{P-1}u})
\]
where again $f$ is a bounded function.
Thus
\begin{align*}
\int_0^{\rho_0} h_{t^P}^{2Q}\,du
&= \int_0^{\rho_0} \frac{h_{t^{P-1}uu}}{1+h_u^2} h_{t^P}^{2Q-1}\,du 
	+ \int_0^{\rho_0} f(h)(1 + h_{t^{P-1}u}) h_{t^P}^{2Q-1}\,du
\\
&\le -\delta\int_0^{\rho_0} h_{t^{P-1}u}^2h_{t^P}^{2Q-2}\,du 
	+ \int_0^{\rho_0} \hat f(h)(1 + h_{t^{P-1}u}) h_{t^P}^{2Q-1}\,du
\\
&\le -\delta\int_0^{\rho_0} h_{t^{P-1}u}^2h_{t^P}^{2Q-2}\,du 
	+ \frac12 \int_0^{\rho_0} h_{t^P}^{2Q}\,du
	+ C
	+ \int_0^{\rho_0} |h_{t^{P-1}u}| |h_{t^P}|^{2Q-1}\,du
\\
&\le -\delta\int_0^{\rho_0} h_{t^{P-1}u}^2h_{t^P}^{2Q-2}\,du 
	+ \frac34 \int_0^{\rho_0} h_{t^P}^{2Q}\,du
	+ C
\end{align*}
Note that we used the $W^{2P-1,Q}$ estimates and Young's inequality.
Absorbing, we find
\[
\int_0^{\rho_0} h_{t^P}^{2Q}\,du
\le C\,.
\]
Returning to our evolution equation, this implies
\[
\frac{d}{dt} \int_0^{\rho_0} h_{t^P}^{2Q}\,du
	+ \frac12\int_0^{\rho_0} h_{t^Pu}^2 h_{t^Pu}^{2Q-2} \,du
\le C
\,.
\]
Integrating yields
\begin{equation}
	\label{theest2}
\int_0^{\rho_0} h_{t^P}^{2Q}\,du
	+ \int_{\varepsilon_N}^{\hat t}\int_0^{\rho_0} h_{t^Pu}^2 h_{t^P}^{2Q-2} \,du\,dt
\le C\hat t + \int_0^{\rho_0} h_{t^P}^{2Q}\,du\bigg|_{t=\varepsilon_N}
\,.
\end{equation}
Note that for all $\varepsilon_N>0$, $\int_0^{\rho_0}
h_{t^P}^{2}\,du\bigg|_{t=\varepsilon_N}$ exists and is
bounded (this is the $Q=1$ case of the \emph{previous step}).

As before, we apply the estimate first for $Q=1$, to find that
\[
\int_0^{\rho_0} h_{t^Pu}^2 \,du
\]
instantaneously exists and is bounded. This will be used in the next step; for now, it implies that
\[
\int_0^{\rho_0} h_{t^{P}}^{2Q} \,du
\]
for any $Q$ exist and are bounded for each $t>0$ by a constant that (possibly)
blows up as $t\searrow0$.
Therefore $\int_0^{\rho_0} h_{t^P}^{2Q}\,du\bigg|_{t=\varepsilon_N}$ for each
$Q$ and $\varepsilon_N>0$ exists and is bounded.
This combined with \eqref{theest2} yields immediately the linear in time
estimate claimed for estimates of order $2Q=N$, and the requirements of the
proceeding inductive step.

Iterating through the cases with induction we find that every derivative of $h$
exists instantaneously.
Furthermore, as noted in the argument above we have the bound
\[
\vn{\partial_t^p \partial_u^q h}_2^2(t) \le Ct
\]
for all $t>\varepsilon_{2p+q}$.
By taking $\varepsilon_{2p+q} = 1$, we are able to guarantee that the above estimate holds for $t>1$, which gives the estimate claimed in the theorem.
\end{proof}

\begin{rmk}
Naturally, there is nothing special about $t=1$ in the smoothing estimate.
We could choose any fixed time.
We may obtain an estimate for all $t>0$, however the constant would not be
uniformly bounded for $t$ small (even if the initial graph function is
regular).
\end{rmk}

We need a custom-built interpolation inequality to move between high-order derivatives of $h$ and the enclosed area.

\begin{prop}
\label{ourinter}
Let $\gamma:[0,L]\rightarrow\R^2$ be a smooth regular curve with length $L$ satisfying the graphicality condition
\[
	G_{-e_2}(s) \ge C_G\,,
\]
and the positivity condition
\[
%	\IP{\gamma(s)}{e_1} \ge 0\,,\quad\text{and}\quad
	\IP{\gamma(s)}{e_2} \ge 0\,.
\]
Then, for the associated graph function $h:[a,b]\rightarrow\R$, we have
\[
	\vn{h_{x^j}}_{L^2(dx)} \le C(A(\gamma))^\frac{1}{2}\vn{h_{x^{2j}}}_{L^2(dx)}^\frac{1}{2} + CA(\gamma)
	\,,
\]
where $j\in\N$, $C$ is a constant depending only on $a, b$, and $A(\gamma)$
denotes the area bounded by the curve $\gamma$ and the $e_1$-axis.
\end{prop}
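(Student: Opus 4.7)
The plan is to prove the inequality by $j$-fold integration by parts, transforming $\int_a^b (h_{x^j})^2\,dx$ into a pairing of $h$ with $h_{x^{2j}}$ plus boundary terms, and then using the graphicality and positivity hypotheses to bound $h$ in $L^\infty$ and $L^2$ directly in terms of the area $A$.

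As a preliminary, graphicality $\IP{\nu}{-e_2}\ge C_G>0$ gives $|h_x|\le\sqrt{1-C_G^2}/C_G$, so $h$ is uniformly Lipschitz with constant depending only on $C_G$. Combined with positivity $h\ge 0$ and the trivial bound $\inf_{[a,b]} h \le A/(b-a)$, the Lipschitz bound yields $\|h\|_{L^\infty} \le C(a,b,C_G)$ independently of $A$. From this and $\|h\|_{L^1}=A$ we get the key estimate
\[
	\|h\|_{L^2}^2 \le \|h\|_{L^\infty}\,\|h\|_{L^1} \le C\,A\,,
\]
so $\|h\|_{L^2}\le C A^{1/2}$. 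This is what produces the $A^{1/2}$ exponent in the leading term of the claimed inequality.

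Next I would integrate by parts $j$ times to obtain
\[
	\int_a^b (h_{x^j})^2\,dx
	= (-1)^j \int_a^b h\,h_{x^{2j}}\,dx
	+ \sum_{k=0}^{j-1}(-1)^k\bigl[h_{x^{j-k-1}}\,h_{x^{j+k}}\bigr]_a^b.
\]
The interior term is controlled by Cauchy--Schwarz and the preliminary bound,
\[
	\Bigl|\int_a^b h\,h_{x^{2j}}\,dx\Bigr|
	\le \|h\|_{L^2}\,\|h_{x^{2j}}\|_{L^2}
	\le C\,A^{1/2}\,\|h_{x^{2j}}\|_{L^2}\,.
\]
Taking square roots and using the subadditivity $(x+y)^{1/2}\le x^{1/2}+y^{1/2}$ yields exactly the first term $CA^{1/2}\|h_{x^{2j}}\|_{L^2}^{1/2}$ in the stated inequality, with the $CA$ contribution absorbing all the boundary corrections.

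For the boundary contributions, each bracket $[h_{x^{j-k-1}}\,h_{x^{j+k}}]_a^b$ must be shown to be $O(A)$. The idea is to apply the 1D trace/Sobolev bound $|f(a)|+|f(b)| \le C(b-a)^{-1}\|f\|_{L^1} + C\|f'\|_{L^1}$ iteratively to the derivatives of $h$, turning the endpoint values of $h_{x^m}$ into integrals that can be bounded in terms of $A$ (for low-order derivatives of $h$, via the uniform Lipschitz structure) and in terms of $\|h_{x^{2j}}\|_{L^2}$ (for higher-order derivatives, which get absorbed into the first term by Young's inequality).

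The main obstacle is precisely this bookkeeping of boundary terms: there are $j$ of them, and they involve point values of intermediate derivatives of $h$ that have no obvious direct bound by $A$. The argument must exploit the Lipschitz inheritance (graphicality together with any available higher regularity, in the applications Proposition~\ref{propw52}) to promote each endpoint value into an integral term, then use Young-type splitting so that any $\|h_{x^{2j}}\|_{L^2}^{1/2}$ contributions merge into the leading term and any $A$-scale leftovers merge into the $CA$ remainder. Keeping the exponent of $A$ sharp and the constant dependent only on $a,b$ requires careful application of the hypothesis that $\|h\|_{L^\infty}$ is controlled independently of $A$ — a feature that is specific to graphical, positive data on a bounded interval.
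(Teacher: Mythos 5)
Your route---$j$-fold integration by parts plus $L^\infty$ and $L^2$ control of $h$---is genuinely different from the paper's, which simply invokes a Gagliardo--Nirenberg--Sobolev interpolation inequality of the form $\|h_{x^j}\|_{L^2}\le C\|h\|_{L^1}^{1/2}\|h_{x^{2j}}\|_{L^2}^{1/2}+C\|h\|_{L^1}$ and then replaces $\|h\|_{L^1}$ by $A(\gamma)$ using positivity. Your sketch, however, has a concrete exponent mismatch. After integrating by parts $j$ times and applying Cauchy--Schwarz, you obtain $\int(h_{x^j})^2 \le \|h\|_{L^2}\|h_{x^{2j}}\|_{L^2} + \text{(boundary terms)}$, and your preliminary estimate $\|h\|_{L^2}\le CA^{1/2}$ gives $\int(h_{x^j})^2\le CA^{1/2}\|h_{x^{2j}}\|_{L^2}+\cdots$; taking a square root then yields the factor $A^{1/4}$, not $A^{1/2}$---the power on $A$ is halved along with everything else. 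In addition, your $L^\infty$ control of $h$ uses the Lipschitz bound $|h_x|\le\sqrt{1-C_G^2}/C_G$ coming from graphicality, so your constant inherits a dependence on $C_G$, whereas the statement asserts $C$ depends only on $a,b$.

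The boundary brackets $[h_{x^{j-k-1}}\,h_{x^{j+k}}]_a^b$ are a second genuine gap. You flag them as ``the main obstacle'' but the sketched remedy (1D trace bounds plus Young) does not close: each trace estimate reintroduces $L^2$ norms of intermediate-order derivatives, whose $A$-scaling is precisely what the proposition is trying to establish, so the argument becomes circular unless one reverts to a bona fide Gagliardo--Nirenberg iteration. Worth flagging: the $A^{1/4}$ that your computation actually produces is not a bookkeeping error on your side. The sharp 1D GNS exponent interpolating $\|h_{x^j}\|_{L^2}$ between $\|h\|_{L^1}$ and $\|h_{x^{2j}}\|_{L^2}$ is $\theta=(2j+1)/(4j+1)>1/2$ on the highest derivative (so $A^{1-\theta}=A^{2j/(4j+1)}$, strictly weaker than $A^{1/2}$ for small $A$), and indeed the stated inequality fails on the test $h(x)=\delta\,\phi\big((x-\tfrac12)/\delta\big)$ on $[0,1]$ with $\phi$ a fixed nonnegative bump, where $A\sim\delta^2$, $\|h_x\|_{L^2}\sim\delta^{1/2}$, $\|h_{xx}\|_{L^2}\sim\delta^{-1/2}$, but $CA^{1/2}\|h_{xx}\|_{L^2}^{1/2}+CA\sim\delta^{3/4}$. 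So the paper's stated interpolation exponent appears too strong; the weaker exponent your method would actually deliver (or the sharp GNS one) still yields the exponential decay needed in the proof of Theorem~\ref{THMmain}.
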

\begin{proof}
Graphicality implies that there exist $a, b\in\R$ and a graph function $h:[a,b]\rightarrow\R$ such that
\[
	\gamma(s) = (x,h(x))
	\,.
\]
The Gagliardo-Nirenberg Sobolev inequality \cite{GNS1,GNS2,GNS3} applied to $h$ yields
%%
%% ||\partial^j h||_p \le C||\partial^m h||_r^\alpha ||h||^{1-\alpha}_q + C||h||_s
%%
%% 1/p = j/n + (1/r - m/n)\alpha + (1-\alpha)/q
%%
%% j/m \le \alpha < 1
%%
%% n = 1
%% q = s = 1
%% 
%%
%% p = 2
%% r = 2
%% m = j+1
%%
%% 1/2 = j - (1/2 + j)\alpha + (1-\alpha)
%%     = j(1-\alpha) + 1 - 3\alpha/2
%%   0 = \alpha(-j-3/2) + j + 1/2
%%  \alpha = (2j+1)/(2j+3) < 1, as required
%% 
%% uniform alpha? don't like rate changing....
%%
%% 1/2 = j + (1/2 - j - l)\alpha + 1 - \alpha
%% j(-1 + \alpha) = 1/2 - \alpha(l+1)
%%
%% j = (1 - 2\alpha)/(2\alpha - 2)
%%
%% but it has to be true for every j, argh.
%%
%% set \alpha = 1/2. then
%%
%% j = -1 + l+1 = l, n = 1, q = s = 1, \alpha = 1/2, m = 2j
%%
%% 
%%%%%%
%%
%% PREV
%%
%\[
%	\bigg(\int_a^b |h_{u^j}(x)|^2\,dx\bigg)^\frac12
%	%\le	C\bigg(\int_a^b |h(x)|\,dx\bigg)^{1-\alpha} \bigg(\int_a^b |h^{(j+1)}(x)|^2\,dx\bigg)^\frac{\alpha}2
%	%      + C\int_a^b |h(x)|\,dx
%	\le	C\bigg(\int_a^b |h(x)|\,dx\bigg)^\frac{2}{2j+3} \bigg(\int_a^b |h_{u^{j+1}}(x)|^2\,dx\bigg)^\frac{2j+1}{4j+6}
%	      + C\int_a^b |h(x)|\,dx
%	      \,, 
%\]
%for a constant $C$ depending only on $a$ and $b$.
%%
%%
%%
%%%%%%
\[
	\bigg(\int_a^b |h_{x^j}(x)|^2\,dx\bigg)^\frac12
	\le	C\bigg(\int_a^b |h(x)|\,dx\bigg)^\frac{1}{2} \bigg(\int_a^b |h_{x^{2j}}(x)|^2\,dx\bigg)^\frac{1}{4}
	      + C\int_a^b |h(x)|\,dx
	      \,, 
\]
for a constant $C$ depending only on $a$ and $b$.

The positivity condition implies $h(x) \ge 0$ so $|h(x)| = h(x)$.
Using $A(\gamma)$ to denote the area of the region bounded by $\gamma([0,L])$ and the $e_1$-axis, we have
\[
	A(\gamma) = \int_a^b |h(u)|\,du
\,.
\]
Therefore we find
\[
	\vn{h_{x^j}}_{L^2(dx)} \le C(A(\gamma))^\frac{1}{2}\vn{h_{x^{2j}}}_{L^2(dx)}^\frac{1}{2} + CA(\gamma)
	\,,
\]
as required.
\end{proof}

We finish with the convergence result, which is by now a corollary of our earlier work.

\begin{proof}[Proof of Theorem \ref{THMmain}.]
First, we use the above results to generate a solution from the $W^{2,\infty}$ data.
We have $T=\infty$ by Theorem \ref{LTE}.
Our interpolation inequality (Proposition \ref{ourinter}), the exponential
decay of area (Proposition \ref{DC_area evolution}), and the linear-in-time
estimates (Theorem \ref{thmallest}) give
\[
	\vn{\partial_u^jh}_{2}(t) \le Ct^\frac14e^{-\frac{C_G^2}{2\rho_0L(0)}t} + Ce^{-\frac{C_G^2}{\rho_0L(0)}t}
	\,,
\]
Therefore all spatial derivatives of $h$ converge exponentially fast to zero.
The evolution equation for $h$ then yields exponential decay of all temporal derivatives of $h$ also.
This is the desired exponentially fast convergence in the smooth topology of
the solution to the horizontal line segment connecting the origin to
$(\rho_0,0)$.
\end{proof}

\section{Numerical approximation of solutions}
  \label{sec_numerical_algorithm}

We now describe a numerical scheme to compute an approximation to equation \eqref{eqgraph} and prove stability, consistency and convergence results.

Let $m,n\in \N$ and let $\delta t= T/ m$, $\delta u =\rho_0 / n$. 
Let 
\[
S=\left\{i\delta u: 0\leq i\leq n \right\},\quad T=\left\{j\delta t: 0\leq j< m \right\},
\]
and $M=S\times \left(T\cup\{m\}\right)$.

We call the domain of any discrete function a mesh.
Let $F(M,\R)$ be the set of all functions $w:M\to\R$.
We shall write $w(k\delta u, j\delta t)$ as $w^j_k$ and $(k,j)\in M$ rather than $(k\delta u, j\delta t)\in M$.
If $w\in F(M,\R)$ we write $w_\cdot^j$ for the
mesh function with domain $S$ or some appropriate restriction of $S$.
On occasion, where $j$ is understood, we write $w_k$ rather than $w_k^j$, as this
reduces substantial notational clutter.
Let $\pi:[0,\rho_0]\times [0,T)\to M$ be the natural restriction.
If $f:[0,\rho_0]\times [0,T)\to \R$ we write $\pi(f)$ for the mesh function $f|_M$.

Let $D_0, D_+, D_-$ be the standard spatial derivative operators on $F(M,\R)$ given, for example, in \cite[Equation 1.1.2]{gustafsson2013time}.
Let $D^2=D_+D_-$ and let $D_t$ be the first order forward approximation 
to the time derivative. We remind the
reader that
$D_+$ is not defined at $n$,
$D_-$ is not defined at $0$, $D^2$ is not defined on either boundary,
and $D_t$ is not defined at $m$.
When taking the maximum norm we implicitly restrict domains where needed.

  Our finite difference approximation to equation \eqref{eqgraph}
  is a mesh function $w\in{F}(M,\R)$
  such that, for all $k\in S$ and $j\in T$,
  \begin{align}
      w^0_k&=h(k\delta u, 0), \label{eq_full_scheme_2_initial}\\
      w^j_n&=0, \label{eq_full_scheme_2_boundary_n}
  \end{align}
  for all $j\in T$,
  \begin{equation}
      \label{eq_num_boundary_condition}
      w^{j+1}_0 = w^j_0 + \delta t\, (D_t w)_1^{j},
  \end{equation}
  and
  for all $k\in S\setminus\{0,n\}, j\in T$,
  \begin{align}
    (D_t w)_k^{j} &= 
        \frac{1}{1 + \left(\left(D_0w\right)_k^j\right)^2}
        \left(
          \left(D^2w\right)_k^j 
          + 
          \frac{\left(D_0w\right)_k^j}{L[w^j_{\cdot}]}
        \right),
      \label{eq_full_scheme_2}
  \end{align}
  where the operator $L:F(S,\R)\to\R$ is the discrete length functional
  \[
    L[v_\cdot]=\delta u\sum_{i=0}^{n-1}\sqrt{%
      1 + \left((D_+v)_{i}\right)^2%
    }\ \,.
  \]
  We define
  ${}^w\!A_{k}^j = \frac{1}{1 + \left((D_0)w_k^j\right)^2}$
  and, when context allows, drop the superscript $w$.

  The difference of
  non-local terms in our numeric scheme is controlled by
  the norm of the $D_+$ derivative.
  \begin{lem}
    \label{lem_L_dif_numer}
    If $w,v\in F(M,\R)$ then
    $\abs{{L(w^{j}_\cdot)}-{L(v^{j}_\cdot)}}
      \leq 
        \rho_0\norm{(D_+(w - v))_{\cdot}^j}_{L^\infty}$.
  \end{lem}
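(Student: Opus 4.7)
The plan is to unpack the definition of $L$, reduce the inequality to a term-by-term estimate of $\sqrt{1+a^2}-\sqrt{1+b^2}$, and then bound each term by $|a-b|$ using the fact that $x\mapsto\sqrt{1+x^2}$ is $1$-Lipschitz. Since this is essentially just a chain of elementary estimates, I do not expect any real obstacle.

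First I would write out the difference explicitly:
\[
L(w^j_\cdot) - L(v^j_\cdot)
   = \delta u\sum_{i=0}^{n-1} \Bigl(\sqrt{1+((D_+w)^j_i)^2} - \sqrt{1+((D_+v)^j_i)^2}\Bigr),
\]
apply the triangle inequality, and then treat a single term.

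Next I would establish the pointwise bound $|\sqrt{1+a^2}-\sqrt{1+b^2}| \le |a-b|$. The cleanest proof is the algebraic identity
\[
\sqrt{1+a^2} - \sqrt{1+b^2} = \frac{a^2-b^2}{\sqrt{1+a^2}+\sqrt{1+b^2}} = (a-b)\cdot\frac{a+b}{\sqrt{1+a^2}+\sqrt{1+b^2}},
\]
together with the observation that $|a+b|\le|a|+|b|\le\sqrt{1+a^2}+\sqrt{1+b^2}$. (Alternatively, one notes that the derivative of $x\mapsto\sqrt{1+x^2}$ is $x/\sqrt{1+x^2}\in[-1,1]$, and invokes the mean value theorem.) Setting $a=(D_+w)^j_i$ and $b=(D_+v)^j_i$ yields
\[
\Bigl|\sqrt{1+((D_+w)^j_i)^2} - \sqrt{1+((D_+v)^j_i)^2}\Bigr|
  \le |(D_+w)^j_i - (D_+v)^j_i| = |(D_+(w-v))^j_i|,
\]
using linearity of $D_+$.

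Finally I would substitute this back into the sum and bound pointwise by the supremum norm:
\[
\bigl|L(w^j_\cdot) - L(v^j_\cdot)\bigr|
  \le \delta u\sum_{i=0}^{n-1} \bigl|(D_+(w-v))^j_i\bigr|
  \le \delta u\cdot n\cdot \norm{(D_+(w-v))^j_\cdot}_{L^\infty}
  = \rho_0\,\norm{(D_+(w-v))^j_\cdot}_{L^\infty},
\]
where the last equality uses $n\delta u = \rho_0$. This completes the proof.
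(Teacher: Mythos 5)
Your proof is correct and follows exactly the route the paper sketches: the paper's proof simply cites the inequality $\sqrt{1+x^2}-\sqrt{1+y^2}<\abs{x-y}$ and leaves the rest as a "direct computation", which is precisely the term-by-term estimate and the $n\,\delta u=\rho_0$ bookkeeping you carried out in full.
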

  \begin{proof}
    This follows by direct computation
    once we note that $\sqrt{1+x^2}-\sqrt{1+y^2}<\abs{x-y}$.
  \end{proof}

  The following is a consistency result for
  our numeric length approximation.
  \begin{lem}
    \label{lem_length_diff_bound}
    If $h$ is a solution to equation \eqref{eqgraph}
    in $C^2([0,\rho_0]\times[0,T))$
    and there exists $B\in[0,\infty)$
    so that, for all $(x,j)\in [0,\rho_0]\times T$,
    $\max\{\abs{h_{uu}(x, j\delta t)}\}\leq B$
    then, for all $j\in T$,
    \[
        \frac{1}{L[h(\cdot, j\delta t)]}
        -
        \frac{1}{L[(\pi(h))_\cdot^j]}
        \leq \frac{3}{2\rho_0}B\delta u.
    \]
  \end{lem}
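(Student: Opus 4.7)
The plan is to write
\[
\frac{1}{L[h(\cdot, j\delta t)]} - \frac{1}{L[(\pi(h))_\cdot^j]} = \frac{L[(\pi(h))_\cdot^j] - L[h(\cdot, j\delta t)]}{L[h(\cdot, j\delta t)]\,L[(\pi(h))_\cdot^j]}
\]
and bound the denominator below and the numerator above separately. For the denominator, both length functionals have integrands/summands of the form $\sqrt{1+y^2}\ge 1$, so
\[
L[h(\cdot,j\delta t)]=\int_0^{\rho_0}\sqrt{1+h_u^2}\,du\ge \rho_0,\qquad L[(\pi(h))_\cdot^j]=\delta u\sum_{i=0}^{n-1}\sqrt{1+((D_+h)_i^j)^2}\ge n\delta u=\rho_0,
\]
so the product is bounded below by $\rho_0^2$. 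This reduces the task to estimating $|L[h(\cdot, j\delta t)] - L[(\pi(h))_\cdot^j]|$ by $\tfrac{3}{2}\rho_0 B\,\delta u$.

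For the numerator I would decompose the continuous length subinterval by subinterval, writing $L[h(\cdot,j\delta t)]=\sum_{i=0}^{n-1}\int_{i\delta u}^{(i+1)\delta u}\sqrt{1+h_u^2(u,j\delta t)}\,du$. The mean value theorem applied to $h(\cdot,j\delta t)$ on each subinterval supplies a point $\xi_i\in[i\delta u,(i+1)\delta u]$ with $(D_+h)_i^j=h_u(\xi_i,j\delta t)$, so the corresponding summand in $L[(\pi(h))_\cdot^j]$ is $\int_{i\delta u}^{(i+1)\delta u}\sqrt{1+h_u^2(\xi_i,j\delta t)}\,du$. The elementary inequality $|\sqrt{1+x^2}-\sqrt{1+y^2}|\le|x-y|$ (the same one used in the proof of Lemma~\ref{lem_L_dif_numer}) together with the Lipschitz control $|h_u(u,j\delta t)-h_u(\xi_i,j\delta t)|\le B|u-\xi_i|\le B\,\delta u$ coming from the hypothesis $|h_{uu}|\le B$ then bounds the $i$-th local error by $B(\delta u)^2$, and summing over the $n=\rho_0/\delta u$ intervals yields $|L[h(\cdot,j\delta t)]-L[(\pi(h))_\cdot^j]|\le \rho_0 B\,\delta u$. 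Dividing by $\rho_0^2$ gives $B\delta u/\rho_0$, which is comfortably less than the stated $\tfrac{3}{2\rho_0}B\delta u$.

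There is no real obstacle here: the argument is a standard first-order quadrature consistency estimate, and the regularity assumption $h\in C^2$ with $|h_{uu}|\le B$ is exactly what converts the mean-value point $\xi_i$ into a controlled $O(\delta u)$ error on $h_u$. The only care needed is in keeping track of the sign so as to get a one-sided rather than an absolute-value bound (a midpoint-rule Taylor expansion of $\sqrt{1+h_u^2}$ around $\xi_i$, using $h_u(\xi_i+r)=h_u(\xi_i)+rh_{uu}(\xi_i)+O(r^2)$, gives the sign directly and is where the slack factor of $\tfrac{3}{2}$ naturally absorbs remainder terms).
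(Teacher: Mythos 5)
Your argument is correct and follows essentially the same route as the paper's (brief) proof: both lengths are bounded below by $\rho_0$ (note the paper's ``$\leq \rho_0$'' is evidently a typo for ``$\geq \rho_0$''), and the length difference is controlled via the mean value theorem, the bound $\abs{h_{uu}}\le B$, and the elementary inequality $\abs{\sqrt{1+x^2}-\sqrt{1+y^2}}\le\abs{x-y}$. Your version is in fact slightly sharper, giving $B\delta u/\rho_0$ rather than $\tfrac{3}{2\rho_0}B\delta u$, so the final sign/Taylor remark is unnecessary slack rather than a needed step.
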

  \begin{proof}
    By definition $L[h(\cdot, j\delta t)]\leq \rho_0$ and
    $L[(\pi(h))_\cdot^j]\leq\rho_0$. 
    The result now follows 
    from the mean value theorem,
    an application of
    Taylor's theorem with remainder 
    and as
    $\sqrt{1+x^2}-\sqrt{1+y^2}<\abs{x-y}$.
  \end{proof}

To prove convergence we make use of the discrete maximum principle (see \cite{ciarlet1970discrete, farago2012discrete} and in particular \cite[Theorem 9]{mincsovics2010discrete}).

\begin{lem}\label{lem_max_inf_numerical_gen}
  Let $X,Y\in F(M,\R)$ be arbitrary mesh functions.
  If $w\in F(M,\R)$ is such that, for all $k\in S\setminus\{0,n\}$ and $j\in T$,
  \begin{align*}
        (D_t w)_k^{j} &\leq
          X_k^j \left(D^2w\right)_k^j 
              + Y_k^j{\left(D_0w\right)_k^j}
  \end{align*}
  and, for all $k\in S\setminus\{0,n\}$ and $j\in T$,
  \begin{align*}
        \left(X^j_k + \frac{Y_{k}^j}{2}\delta u\right)\geq 0, \quad
        \left(1 - 2 X_k^j \frac{\delta t}{\delta u^2}\right) \geq 0,\quad
        \left(X^j_k - \frac{Y_{k}^j}{2}\delta u\right)\geq 0,
  \end{align*}
  then the vector maximum norm of $w$ is such that, for all $j\in T$,
  \[
        \norm{w^{j+1}_\cdot}_{L^\infty}
        \leq
        \max\left\{
          \abs{w_{0}^{j}}_{L^\infty},
          \norm{w_{\cdot}^{j}}_{L^\infty},
          \abs{w_{n}^{j}}_{L^\infty}
        \right\}.
  \]
\end{lem}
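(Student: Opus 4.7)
The plan is to prove this as the standard discrete parabolic maximum principle: the hypothesised sub-solution inequality can be rearranged so that $w_k^{j+1}$ is dominated by a convex combination of $w_{k-1}^j$, $w_k^j$, and $w_{k+1}^j$, with the three sign hypotheses on $X_k^j$, $Y_k^j$, $\delta t$, $\delta u$ being exactly the non-negativity conditions on the three stencil weights.

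First I would unpack the discrete operators at an interior node $k\in S\setminus\{0,n\}$, substituting
\[
(D_t w)_k^j=\frac{w_k^{j+1}-w_k^j}{\delta t},\quad (D^2 w)_k^j=\frac{w_{k+1}^j-2w_k^j+w_{k-1}^j}{\delta u^2},\quad (D_0 w)_k^j=\frac{w_{k+1}^j-w_{k-1}^j}{2\delta u},
\]
into the hypothesised inequality. A direct rearrangement yields
\[
w_k^{j+1}\;\le\;\alpha^+_k\, w_{k+1}^j+\alpha^0_k\, w_k^j+\alpha^-_k\, w_{k-1}^j
\]
with
\[
\alpha^\pm_k=\frac{\delta t}{\delta u^2}\Bigl(X_k^j\pm\tfrac{\delta u}{2}Y_k^j\Bigr),\qquad \alpha^0_k=1-\frac{2X_k^j\,\delta t}{\delta u^2}.
\]
A one-line computation gives $\alpha^+_k+\alpha^0_k+\alpha^-_k=1$, and the three sign hypotheses (after pulling out the common positive factor $\delta t/\delta u^2$) are precisely $\alpha^+_k\ge 0$, $\alpha^0_k\ge 0$, $\alpha^-_k\ge 0$.

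Next I would convert the stencil inequality into the maximum-norm estimate. Because the three $\alpha_k^{\pm,0}$ are non-negative and sum to $1$, the right-hand side above is no larger than $\max\{w_{k-1}^j,w_k^j,w_{k+1}^j\}$, so for every interior index $k$
\[
w_k^{j+1}\;\le\;\max\bigl\{|w_0^j|,\norm{w_\cdot^j}_{L^\infty},|w_n^j|\bigr\}.
\]
Since $k\in S\setminus\{0,n\}$ is arbitrary, this gives the claimed bound on the interior part of $w^{j+1}$; the boundary nodes $k=0$ and $k=n$ at step $j+1$ are not constrained by the lemma's hypothesis and are handled separately (in later applications, via the scheme's boundary rules \eqref{eq_num_boundary_condition}--\eqref{eq_full_scheme_2_boundary_n}).

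The main ``obstacle'' here is really just bookkeeping: verifying that the three sign conditions in the hypothesis align one-to-one with non-negativity of $\alpha^+_k$, $\alpha^0_k$, $\alpha^-_k$, and that the three coefficients sum to exactly $1$. There is no analytic difficulty; the substantive content is that under the stated CFL-type and cell-P\'eclet conditions the scheme acts as a genuine discrete averaging operator, so it inherits a comparison principle identically to its continuum counterpart (cf.\ \cite{mincsovics2010discrete}).
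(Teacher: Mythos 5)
Your proposal is correct and follows essentially the same route as the paper's proof: rewrite the sub-solution inequality as a convex combination of $w_{k-1}^j,w_k^j,w_{k+1}^j$ with weights $\frac{\delta t}{\delta u^2}\bigl(X_k^j\pm\tfrac{\delta u}{2}Y_k^j\bigr)$ and $1-2X_k^j\frac{\delta t}{\delta u^2}$, observe these are non-negative by hypothesis and sum to one, and conclude by taking maxima. Your explicit remark that the boundary values at step $j+1$ are unconstrained and must be supplied by the scheme's boundary rules is in fact a point the paper's own final display handles (with $j+1$ indices at the boundary) but its lemma statement glosses over, so no gap on your side.
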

\begin{proof}
    We compute that, for all $k\in S\setminus\{0,n\}$
    and $j\in T\setminus\{m\}$,
    \begin{align*}
      w_k^{j+1} &\leq w_k^j + \delta t
      \left(
        X_k^j \left(D^2w\right)_k^j + Y_k^j{\left(D_0w\right)_k^j}
      \right)\\
      &= w_k^j + 
        X_k^j\frac{w_{k+1}^j - 2w_{k}^{j} + w_{k-1}^j}{(\delta u)^2}\delta t
        + Y_{k}^j\frac{w_{k+1}^j - w_{k-1}^j}{2\delta u}\delta t\\
      &= \left(X^j_k + \frac{Y_{k}^j}{2}\delta u\right)
        \frac{\delta t}{(\delta u)^2} w_{k+1}^j+
      \left(1 - 2 X_k^j \frac{\delta t}{\delta u^2}\right) w_k^j + 
      \left(X^j_k - \frac{Y_{k}^j}{2}\delta u\right)
        \frac{\delta t}{(\delta u)^2} w_{k-1}^j.
    \end{align*}
    Hence, with our assumptions,
    \begin{align*}
      \abs{w_k^{j+1}} 
      &= 
      \left(
        1 
        - 2 X_k^j \frac{\delta t}{(\delta u)^2}
        + X^j_k\frac{\delta t}{(\delta u)^2} 
        + X^j_k\frac{\delta t}{(\delta u)^2} 
        + \frac{Y_{k}^j}{2}\frac{\delta t}{\delta u}
        - \frac{Y_{k}^j}{2}\frac{\delta t}{\delta u}
      \right)\norm{w_{\cdot}^{j}}_{L^\infty}\\
      &\leq \norm{w_{\cdot}^{j}}_{L^\infty}.
    \end{align*}
    Therefore
    \[
      \norm{w^{j+1}_\cdot}_{L^\infty}
      \leq\max\left\{
        \abs{w_0^{j+1}}, \abs{w_n}^{j+1}, \norm{w_{\cdot}^{j}}_{L^\infty}
      \right\}.
    \]
  \end{proof}

  Using the discrete maximum principle we can show that
  bounded initial and boundary data implies existence of
  a solution to our numeric scheme.
  \begin{thm}
    \label{thm_inf_bound_w_numeric}
    If $w\in F(M,\R)$ satisfies
    equation \eqref{eq_full_scheme_2}
    and $\delta u,\delta t$ are such that
    $(\delta u)^2 \geq 2\delta t$,
    and
    $2\rho_0\geq \delta u$
    then,
    for all $j\in T$,
    \[
      \norm{w^{j+1}_\cdot}_{L^\infty}
      \leq
      \max\left\{
        \norm{w_{\cdot}^{j}}_{L^\infty},
        \abs{w_{0}^{j+1}},
        \abs{w_{n}^{j+1}}
      \right\}.
    \]
  \end{thm}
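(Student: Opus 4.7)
The plan is to apply Lemma \ref{lem_max_inf_numerical_gen} directly to the scheme \eqref{eq_full_scheme_2}. First I would identify the correct choices of $X$ and $Y$: rewriting \eqref{eq_full_scheme_2} in the form of the lemma hypothesis, we read off
\[
X_k^j = {}^w\!A_k^j = \frac{1}{1+\bigl((D_0 w)_k^j\bigr)^2}, \qquad
Y_k^j = \frac{{}^w\!A_k^j}{L[w^j_\cdot]},
\]
so that $(D_t w)_k^j = X_k^j (D^2 w)_k^j + Y_k^j (D_0 w)_k^j$ holds with equality, which is stronger than the inequality required by the lemma.

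Second I would check the three algebraic conditions on $X_k^j$ and $Y_k^j$. Positivity of $X_k^j$ is immediate, and $L[w^j_\cdot]>0$ so $Y_k^j\ge 0$ as well, giving $X_k^j + \tfrac{\delta u}{2} Y_k^j \ge 0$ with no further assumptions. For the CFL-type condition $1 - 2X_k^j\,\delta t/(\delta u)^2 \ge 0$, I would use $X_k^j \le 1$ together with the hypothesis $(\delta u)^2 \ge 2\delta t$.

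The one genuinely nontrivial step is the third condition $X_k^j - \tfrac{\delta u}{2}Y_k^j \ge 0$, which factors as
\[
\frac{1}{1+\bigl((D_0 w)_k^j\bigr)^2}\left(1 - \frac{\delta u}{2 L[w^j_\cdot]}\right) \ge 0,
\]
and therefore reduces to showing $2 L[w^j_\cdot] \ge \delta u$. For this I would use the elementary observation that the discrete length is bounded below by the base length: since $\sqrt{1+x^2}\ge 1$,
\[
L[w^j_\cdot] = \delta u \sum_{i=0}^{n-1}\sqrt{1 + ((D_+ w)_i^j)^2} \ge n\,\delta u = \rho_0.
\]
Combined with the assumption $2\rho_0 \ge \delta u$, this yields $2L[w^j_\cdot] \ge 2\rho_0 \ge \delta u$, as needed. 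With all three hypotheses of Lemma \ref{lem_max_inf_numerical_gen} verified, the conclusion of the theorem follows immediately.

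The main obstacle, to the extent there is one, is the verification of the third condition; the key non-obvious idea there is the uniform \emph{lower} bound $L[w^j_\cdot]\ge \rho_0$, which is specific to the graphical setting and would fail for an arbitrary discrete curve. Everything else is bookkeeping: matching coefficients, non-negativity, and the explicit CFL restriction $(\delta u)^2 \ge 2\delta t$. Note that the theorem does not require any existence argument since the scheme is explicit --- $w^{j+1}_\cdot$ is determined from $w^j_\cdot$ by a closed-form update on the interior and by \eqref{eq_num_boundary_condition}, \eqref{eq_full_scheme_2_boundary_n} at the endpoints --- so the content of the statement is purely the $L^\infty$ bound.
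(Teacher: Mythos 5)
Your proof is correct and matches the paper's approach, which cites Lemma~\ref{lem_max_inf_numerical_gen} and equation~\eqref{eq_full_scheme_2} without filling in the verification. You supply exactly the missing details: the identification $X_k^j={}^w\!A_k^j$, $Y_k^j={}^w\!A_k^j/L[w^j_\cdot]$, the CFL check using $X_k^j\le 1$ together with $(\delta u)^2\ge 2\delta t$, and the key observation that $L[w^j_\cdot]\ge\rho_0$ (since $\sqrt{1+x^2}\ge 1$) which, with $2\rho_0\ge\delta u$, forces the third sign condition $X_k^j-\tfrac{\delta u}{2}Y_k^j\ge 0$.
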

  \begin{proof}
    Lemma
    \ref{lem_max_inf_numerical_gen}
    and equation \eqref{eq_full_scheme_2}
    give the result.
  \end{proof}

  The following records a useful computation.
  \begin{prop}
    \label{prop_DtD+_is}
    If $w\in F(M,\R)$ satisfies
    equations 
    \eqref{eq_full_scheme_2} then
    for all $j\in T$
    and all $k\in S\setminus\{0,n\}$
    then
    \begin{align*}
        (D_tD_+w)^{j+1}_k
      &=
        X^j_k\,(D^2D_+w)^j_k + Y^j_k\,(D_0D_+w)^j_{k}
    \end{align*}
    where
    \begin{align*}
        {}^w\!X^j_k 
      &= 
        \frac{1}{2}\left(A_{k}^j + A^j_{k+1}\right),\\
        {}^wY^j_k 
      &=
        (D_+A)^j_{k}
        +
        \frac{A_{k}^j + A_{k+1}^j}{2L(w^j_\cdot)}
        -
        \frac{1}{2L(w^j_\cdot)}
        ((D_0w)_{k+1}^j + (D_0w)_{k}^j)^2A_k^jA^j_{k+1}
    \end{align*}
  \end{prop}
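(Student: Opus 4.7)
The plan is to apply $D_+$ directly to the scheme \eqref{eq_full_scheme_2} and regroup using two discrete calculus identities. Starting from
\[
(D_t w)^j_k = A^j_k (D^2 w)^j_k + \frac{A^j_k (D_0 w)^j_k}{L[w^j_\cdot]},
\]
and noting that $D_t$ and $D_+$ act on independent indices and so commute as pure difference quotients, while $L[w^j_\cdot]$ is a scalar in $k$, I obtain
\[
(D_t D_+ w)^{j+1}_k = D_+(A\, D^2 w)^j_k + \frac{1}{L[w^j_\cdot]}\, D_+(A\, D_0 w)^j_k.
\]

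The next step is to invoke the symmetric discrete Leibniz rule
\[
D_+(fg)_k = \frac{f_k + f_{k+1}}{2}(D_+ g)_k + \frac{g_k + g_{k+1}}{2}(D_+ f)_k,
\]
which is a direct algebraic rewriting of $f_{k+1}g_{k+1} - f_k g_k$. Applied with $g = D^2 w$, together with the elementary averaging identity $(D^2 w)_k + (D^2 w)_{k+1} = 2(D_0 D_+ w)_k$ (verified by expanding both sides) and $D_+ D^2 = D^2 D_+$, this produces
\[
D_+(A\, D^2 w)_k = \frac{A_k + A_{k+1}}{2}(D^2 D_+ w)_k + (D_+ A)_k (D_0 D_+ w)_k.
\]
Likewise, with $g = D_0 w$ and $D_+ D_0 = D_0 D_+$, I obtain
\[
D_+(A\, D_0 w)_k = \frac{A_k + A_{k+1}}{2}(D_0 D_+ w)_k + \frac{(D_0 w)_k + (D_0 w)_{k+1}}{2}(D_+ A)_k.
\]

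The remaining task is to convert the trailing $(D_+ A)_k$ factor in the last line into a coefficient of $(D_0 D_+ w)_k$. From the definition $A_k = 1/(1+(D_0 w)_k^2)$, a short manipulation of the difference quotient gives
\[
(D_+ A)_k = -\bigl((D_0 w)_k + (D_0 w)_{k+1}\bigr)(D_0 D_+ w)_k\, A_k A_{k+1}.
\]
Substituting this into the $\tfrac{(D_0 w)_k + (D_0 w)_{k+1}}{2L}(D_+ A)_k$ term yields exactly the $-\tfrac{1}{2L}\bigl((D_0 w)_{k+1}+(D_0 w)_k\bigr)^2 A_k A_{k+1}$ contribution to $Y^j_k$, while the $(D_+ A)_k$ surviving from the first expansion and the $\tfrac{A_k + A_{k+1}}{2L}$ from the second supply the other two summands in $Y^j_k$. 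Collecting the coefficients of $(D^2 D_+ w)_k$ and $(D_0 D_+ w)_k$ produces the stated $X^j_k$ and $Y^j_k$.

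The main obstacle is purely bookkeeping: one must choose the symmetric (rather than one-sided) splitting in the discrete Leibniz rule, so that the half-averages of $D^2 w$ and $D_0 w$ collapse onto $D_0 D_+ w$, and one must spot the $A_k A_{k+1}$ identity that rewrites $D_+ A$ in terms of $D_0 D_+ w$. Once these are in hand, the result follows by straightforward algebraic re-collection; no estimates, maximum-principle arguments, or analytic tools are required.
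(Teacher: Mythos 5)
Your proposal is correct and follows essentially the same route as the paper's proof: the symmetric discrete Leibniz rule for $D_+$ of a product, the averaging identity $(D^2w)_k+(D^2w)_{k+1}=2(D_0D_+w)_k$, and the factorisation of $(D_+A)_k$ in terms of $A_kA_{k+1}$ and $(D_0D_+w)_k$ are exactly the three identities the paper uses (the paper merely bundles $D^2w + D_0w/L$ into a single function $F$ before applying the product rule, which is only a cosmetic difference).
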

  \begin{proof}
    Let 
    \[
      F(w_k^j)=\frac{(D_0w)_k^j}{L(w_\cdot^j)} + (D^2w)_k^j
    \]
    so that $(D_tw)_k^j = A_{k}F(w_k^j)$
    and
    \begin{align*}
        (D_tD_+w)^{j+1}_k
      &=
          \frac{1}{2}
          \left(
              D_+A_{k}
              \left(
                F(w_{k+1}^j)
                +
                F(w_{k}^j)
              \right)
              +
              D_+F(w_{k}^j)
              \left(
                A_{k}
                +
                A_{k+1}
              \right)
          \right).
    \end{align*}
    We can also compute that
    \begin{align*}
        D_+F(w^j_k)
      &=
        \frac{(D_0D_+w)_{k}^j}{L(w_\cdot^j)}
        +
        {(D^2D_+w)_k^j},\\
        (D^2w)_{k+1}^j
        +
        (D^2w)_{k}^j
      &=
        2(D_0D_+w)_{k}^j,\\
        (D_+A)_{k}
        \left(
          (D_0w)_{k+1}^j
          +
          (D_0w)_{k}^j
        \right)
      &= 
      -((D_0w)_{k}^j + (D_0w)_{k+1}^j)^2A_kA_{k+1}(D_0D_+w)_k^j.
    \end{align*}
    Plugging the three equalities above into our equation for
    $(D_tD_+w)^{j+1}_k$ gives the result.
  \end{proof}

  We now prove a maximum principle for the $D_+$ derivative
  which also gives us control over the $D_0$ derivative
  and therefore also the
  difference of both the non-local and the non-linear term.
  \begin{thm}
    \label{thm_inf_bound_d+w_numeric}
    If $w\in F(M,\R)$ satisfies
    equations
    \eqref{eq_full_scheme_2} and the initial and boundary values
    given by equations
    \eqref{eq_full_scheme_2_initial},
    \eqref{eq_full_scheme_2_boundary_n}, and
    \eqref{eq_num_boundary_condition}
    and $\delta u,\delta t$ are such that
    $(\delta u)^2 \geq 2\delta t$,
    and $\rho_0\geq \delta u(1 + \norm{(D_+w)_\cdot^0}_{L^\infty}^2)$
    then,
    for all $j\in T$,
    \[
        \norm{D_0w^{j+1}_\cdot}_{L^\infty}
      \leq
        \norm{D_+w^{j+1}_\cdot}_{L^\infty}
      \leq
          \norm{D_+w^j_{0}}_{L^\infty}.
    \]
  \end{thm}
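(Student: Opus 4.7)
The plan is induction on $j$ combined with a discrete maximum principle (Lemma \ref{lem_max_inf_numerical_gen}) applied to the derivative mesh function $D_+w$, whose discrete evolution is recorded in Proposition \ref{prop_DtD+_is}. The first inequality $\vn{D_0w^{j+1}_\cdot}_{L^\infty}\le\vn{D_+w^{j+1}_\cdot}_{L^\infty}$ is immediate from the averaging identity $(D_0w)_k=\tfrac12\bigl((D_+w)_k+(D_+w)_{k-1}\bigr)$, so only the second inequality requires work.

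For the induction, assume $\vn{D_+w^j_\cdot}_{L^\infty}\le\vn{D_+w^0_\cdot}_{L^\infty}$ (the base case is trivial). Proposition \ref{prop_DtD+_is} gives
\[
(D_tD_+w)^{j+1}_k = X^j_k\,(D^2D_+w)^j_k + Y^j_k\,(D_0D_+w)^j_{k}
\]
on interior indices. The objective is to invoke Lemma \ref{lem_max_inf_numerical_gen} with these coefficients, which requires verifying $X^j_k\pm\tfrac12 Y^j_k\delta u\ge 0$ and $1-2X^j_k\delta t/(\delta u)^2\ge 0$. Since $X^j_k=\tfrac12(A^j_k+A^j_{k+1})\le 1$, the CFL hypothesis $(\delta u)^2\ge 2\delta t$ handles the parabolic condition immediately.

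The heart of the argument is the estimate $|Y^j_k|\delta u\le 2X^j_k$. Using the three pieces of $Y^j_k$ in Proposition \ref{prop_DtD+_is}, the elementary lower bound $L[w^j_\cdot]\ge\rho_0$, and the inductive hypothesis $\vn{D_0w^j_\cdot}_{L^\infty}\le\vn{D_+w^j_\cdot}_{L^\infty}\le\vn{D_+w^0_\cdot}_{L^\infty}$, one obtains $|Y^j_k|\le C(1+\vn{D_+w^0_\cdot}_{L^\infty}^2)/\rho_0$; the same hypothesis yields $X^j_k\ge (1+\vn{D_+w^0_\cdot}_{L^\infty}^2)^{-1}$. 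Combining these estimates with the second CFL hypothesis $\rho_0\ge\delta u(1+\vn{D_+w^0_\cdot}_{L^\infty}^2)$ gives the required $|Y^j_k|\delta u\le 2X^j_k$.

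Lemma \ref{lem_max_inf_numerical_gen} then bounds $\vn{D_+w^{j+1}_\cdot}_{L^\infty}$ by the previous-step norm together with the boundary values $(D_+w)^{j+1}_0$ and $(D_+w)^{j+1}_{n-1}$. The Neumann scheme \eqref{eq_num_boundary_condition} together with the interior update for $k=1$ yields the exact identity $(D_+w)^{j+1}_0=(D_+w)^j_0$, which is harmless. The Dirichlet end requires a separate argument—using $w^{j+1}_n=0$, the explicit scheme at $k=n-1$, and the infinity bound from Theorem \ref{thm_inf_bound_w_numeric} to control $w^{j+1}_{n-1}$—to bound $(D_+w)^{j+1}_{n-1}$ in terms of earlier data. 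I expect the main technical obstacles to be the estimate on $|Y^j_k|$ (in particular exploiting the sign cancellation in its third term, which is what makes the sign conditions tight) together with this $k=n-1$ boundary analysis.
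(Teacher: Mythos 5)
Your overall skeleton coincides with the paper's (terse) proof: induction on $j$, Proposition \ref{prop_DtD+_is} fed into Lemma \ref{lem_max_inf_numerical_gen}, the averaging identity $2(D_0w)_k=(D_+w)_k+(D_+w)_{k-1}$, and the exact identity $(D_+w)^{j+1}_0=(D_+w)^j_0$ at the Neumann end; your separate treatment of the index $k=n-1$ (where $(D^2D_+w)_k$ is undefined) is a point the paper passes over in silence, and your sketch for it does close under the stated mesh conditions. However, the step you call the heart of the argument has a genuine gap: you cannot bound $\abs{{}^wY^j_k}$ by $C(1+\norm{(D_+w)^0_\cdot}_{L^\infty}^2)/\rho_0$. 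Its first term is $(D_+A)^j_k=(A^j_{k+1}-A^j_k)/\delta u$, which (as in the proof of Proposition \ref{prop_DtD+_is}) equals $-\big((D_0w)^j_k+(D_0w)^j_{k+1}\big)A^j_kA^j_{k+1}(D_0D_+w)^j_k$, a second-difference quantity of the numerical solution that is \emph{not} controlled by the inductive gradient bound and is generically of size $1/\delta u$. Moreover, even granting such a bound, your chain $\abs{Y}\le C(1+M^2)/\rho_0$, $X\ge(1+M^2)^{-1}$ (with $M=\norm{(D_+w)^0_\cdot}_{L^\infty}$) would yield $\abs{Y}\delta u\le 2X$ only under $\rho_0\gtrsim\delta u(1+M^2)^2$, which is strictly stronger than the hypothesis $\rho_0\ge\delta u(1+M^2)$ you are allowed to use; and the decisive cancellation is not in the third term of $Y$, as you suggest, but in the first.

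The repair is to never estimate $(D_+A)^j_k$ at all: since $X^j_k=\tfrac12(A^j_k+A^j_{k+1})$ and $\tfrac{\delta u}{2}(D_+A)^j_k=\tfrac12(A^j_{k+1}-A^j_k)$, one has exactly $X^j_k+\tfrac{\delta u}{2}(D_+A)^j_k=A^j_{k+1}$ and $X^j_k-\tfrac{\delta u}{2}(D_+A)^j_k=A^j_k$. For the remaining two terms of $\tfrac{\delta u}{2}Y^j_k$, use $\big((D_0w)^j_k+(D_0w)^j_{k+1}\big)^2A^j_kA^j_{k+1}\le 2(A^j_k+A^j_{k+1})$ (from $(a+b)^2\le2a^2+2b^2$ and $x^2/(1+x^2)\le1$) and $L[w^j_\cdot]\ge\rho_0$ to see that their total contribution to $X^j_k\pm\tfrac{\delta u}{2}Y^j_k$ is bounded below by $-\frac{\delta u}{4L}(A^j_k+A^j_{k+1})\ge-\frac{\delta u}{2\rho_0}$. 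Hence $X^j_k\pm\tfrac{\delta u}{2}Y^j_k\ge A-\frac{\delta u}{2\rho_0}$ with $A\in\{A^j_k,A^j_{k+1}\}$, and the inductive hypothesis gives $A\ge(1+M^2)^{-1}$, so the hypothesis $\rho_0\ge\delta u(1+M^2)$, i.e.\ $\delta u/\rho_0\le(1+M^2)^{-1}$, yields $X^j_k\pm\tfrac{\delta u}{2}Y^j_k\ge\tfrac12(1+M^2)^{-1}>0$; together with $X^j_k\le1$ and $(\delta u)^2\ge2\delta t$ this verifies all conditions of Lemma \ref{lem_max_inf_numerical_gen} with exactly the stated mesh restriction. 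With that replacement, and your $k=0$ and $k=n-1$ boundary observations, the induction closes as you describe.
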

  \begin{proof}
    We shall prove the result using an inductive argument.
    Let $j\in T$ and suppose that
    $\norm{(D_0w)^j_\cdot}_{L^\infty}\leq \norm{(D_+w)^0_\cdot}_{L^\infty}$.
    Since $(D_+w)_0^{j+1}=(D_+w)^j_0$,
    Proposition \ref{prop_DtD+_is},
    Lemma \ref{lem_max_inf_numerical_gen}, and our assumptions
    implies that for $k\in S\setminus\{0,n\}$ we have
    $\norm{(D_+w)_\cdot^{j+1}}_{L^\infty} 
      \leq \norm{D_+w^j_{\cdot}}_{L^\infty}$.
    Since, $2(D_0w)_k^j=(D_+w)^j_k + (D_+w)^j_{k-1}$
    we see that
    $\norm{(D_0w)_\cdot^{j+1}}_{L^\infty} 
      \leq \norm{D_+w^{j+1}_{\cdot}}_{L^\infty}$.
    Since
    $\norm{(D_0w)^0_\cdot}_{L^\infty}\leq \norm{(D_+w)^0_\cdot}_{L^\infty}$,
    by definition,
    our inductive assumption holds for $j=0$ and the result is true.
  \end{proof}

  We are now in a position to prove our two consistency results.
  \begin{thm}
    \label{thm_consistency_d+}
    Let $h$ be a solution to equation \eqref{eqgraph}
    in $C^4([0,\rho_0]\times[0,T))$ so that
    for all $t\in[0,T)$,
    $h(\rho_0, t)=0$ and $h_0(0,t)=0$.
    Fix $j\in T$.
    Let $v$ be the function, defined on the mesh
    $S\times\{(j)\delta t, (j+1)\delta t\}$,
    so that 
    $v_i^{j}=\pi(h)_i^{j}$
    and
    \begin{align*}
      v_0^{j+1}&=v_0^j + \delta t\,(D_t v)^j_1,\\
      v_i^{j+1}&= v_i^{j} + 
        \delta t\, F((D_0\pi(h))_i^{j}, (D^2\pi(h))_i^{j}, L[(\pi(h)_\cdot^{j})]),\\
      v_n^{j+1}&=0,
    \end{align*}
    where 
    \[
      F(x,y,z)=\frac{1}{1+x^2}\left(y + \frac{x}{z}\right).
    \]

    If there exists $B\in[0,\infty)$
    so that, for all $(x,j)\in [0,\rho_0]\times T$,
    \begin{align*}
      &\max\biggl\{
        \abs{h_{uu}(x, j\delta t)},
        \abs{h_{uuu}(x, j\delta t)},
        \abs{h_{uut}(x, j\delta t)},\\
      &\hspace{3cm}
        \abs{h_{utt}(x, j\delta t)}, 
        \abs{h_{ttt}(x, j\delta t)},
        \abs{h_{uuuu}(x, j\delta t)}
      \biggr\}\leq B,
    \end{align*}
    then, 
    \begin{align}
      \label{eq_d+_con_0}
      \frac{1}{\delta t}\abs{(D_+(\pi(h) - v))^{j+1}_0}
      &\leq 
        \delta u\,\frac{B}{2}
        +
        \delta t\, \frac{B}{2}
        +
        \frac{\delta t^2}{\delta u}\frac{B}{3},
    \end{align}
    and,
    for all $i\in S\setminus\{0,n\}$,
    \begin{align}
      \label{eq_d+_con_k}
      \frac{1}{\delta t}\abs{(D_+(\pi(h) - v))^{j+1}_i}
      &\leq
          \delta u
          \biggl(
            B
            \left(
              \frac{5}{6\rho_0}
              +
              \frac{1}{2}
            \right)
            +
            B^2
            \left(
              \frac{1}{\rho_0}
              +
              \frac{3}{2}\rho_0
              +
              \frac{17}{6}
            \right)
            +
            B^3
          \biggr)\\
        &\hspace{1cm}
          +
          (\delta u)^2
          \biggl(
            \frac{5}{6}B
            +
            B^2
            \left(
              \frac{5}{6\rho_0}
              +
              \frac{3}{4}\rho_0
              +\frac{21}{12}
            \right)
          \biggr)
          +
          B\frac{\delta t}{2}\nonumber\\
        &\hspace{1cm}
          +
          (\delta u)^3\frac{9}{24}B^2
          +
          \frac{1}{3}B\frac{\delta t^2}{\delta u}.\nonumber
    \end{align}
  \end{thm}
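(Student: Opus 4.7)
The plan is a direct consistency computation based on Taylor expansions, split into the boundary case $i=0$ and the interior case. Since $v^j = \pi(h)^j$ by construction, both inequalities amount to bounding the single-step truncation error of the scheme applied to exact data, so no inductive argument in $j$ is needed.

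For the boundary estimate \eqref{eq_d+_con_0}, the first observation is that the boundary update combined with the interior rule at $i=1$ yields $v_0^{j+1} - v_0^j = v_1^{j+1} - v_1^j$, so $(D_+ v)^{j+1}_0 = (D_+ v)^j_0 = (D_+ \pi(h))^j_0$. Writing $g(t) := h(\delta u, t) - h(0, t)$, this reduces the estimate to
\[
(D_+(\pi(h) - v))^{j+1}_0 = \frac{g((j+1)\delta t) - g(j\delta t)}{\delta u}.
\]
The Neumann condition $h_u(0,t) = 0$ (and hence $h_{tu}(0,t)=0$ after differentiation in $t$) lets me Taylor expand in $u$ to get $g'(j\delta t) = \tfrac{\delta u^2}{2} h_{uut}(\xi_1, j\delta t)$ and $g''(j\delta t) = \delta u\, h_{utt}(\xi_2, j\delta t)$, while crudely $|g'''(\tau)| \le 2B$. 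Substituting into a third-order Taylor expansion of $g$ in $t$ and dividing by $\delta u\,\delta t$ produces exactly the three terms $\tfrac{\delta u B}{2}$, $\tfrac{\delta t B}{2}$, and $\tfrac{\delta t^2 B}{3\delta u}$.

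For the interior estimate \eqref{eq_d+_con_k}, I would use the PDE $h_t = F(h_u, h_{uu}, L[h])$, where $F(x,y,z) = (y + x/z)/(1+x^2)$, together with a second-order Taylor expansion in $t$ to write
\[
(\pi(h) - v)^{j+1}_k = \delta t\bigl[F(h_u, h_{uu}, L[h]) - F((D_0\pi(h))_k^j, (D^2\pi(h))_k^j, L[\pi(h)^j])\bigr] + \tfrac{\delta t^2}{2} h_{tt}(k\delta u, \tau_k).
\]
The bracket is then controlled by the smoothness of $F$ together with the standard finite-difference remainders $(D_0\pi(h))_k^j - h_u = \tfrac{\delta u^2}{6} h_{uuu}(\eta_k)$ and $(D^2\pi(h))_k^j - h_{uu} = \tfrac{\delta u^2}{12} h_{uuuu}(\eta'_k)$, and by Lemma \ref{lem_length_diff_bound} for the length argument. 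Applying $D_+$ reduces this to a spatial first-difference of each pointwise error, and a final Taylor expansion in $u$ converts these differences into factors of $\delta u$ multiplied by one additional $u$-derivative of the relevant remainder term.

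The conceptual input is minimal; the genuine difficulty is purely bookkeeping. The explicit prefactors such as $\tfrac{5}{6\rho_0}$, $\tfrac{17}{6}$, and $\tfrac{9}{24}$ must each be tracked through the expansion of $F$, which means the quadratic-in-$B$ contributions (arising from the $x/z$ term interacting with the $(1+x^2)^{-1}$ factor) must be grouped separately from the linear ones, and the bound $L[\pi(h)^j] \ge \rho_0$ must be invoked at every place where the non-local term is divided. The hardest piece is the mixed $(\delta u)^2$ and $(\delta u)^3$ terms: they come from combining the two quadratic remainders in $D_0$ and $D^2$ with the Lipschitz expansion of $F$, and one must not discard intermediate cross-terms before applying $D_+$, since some of those cross-terms are what produces the $B^2$ and $B^3$ contributions visible in \eqref{eq_d+_con_k}.
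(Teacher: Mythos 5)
Your boundary argument (the $i=0$ case) is exactly the paper's: the observation that the update for $v_0$ forces $(D_+v)_0^{j+1}=(D_+v)_0^j$, so the error reduces to a pure time-difference of $g(t)=h(\delta u,t)-h(0,t)$, which is killed at $O(\delta u)$ by the Neumann condition. The constants you obtain match \eqref{eq_d+_con_0}.

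The interior case is where your plan has a gap, and it is in the \emph{order of operations}. You propose to Taylor-expand in $t$ \emph{first}, obtain a pointwise residual such as $\tfrac{\delta t^2}{2}h_{tt}(k\delta u,\tau_k)$ and Lagrange remainders $\tfrac{\delta u^2}{6}h_{uuu}(\eta_k)$, $\tfrac{\delta u^2}{12}h_{uuuu}(\eta_k')$, and then apply $D_+$. But those intermediate points $\tau_k$, $\eta_k$, $\eta_k'$ are produced by the mean value theorem at each separate grid index $k$; they are not a smooth function of $k$. So the step ``a final Taylor expansion in $u$ converts these differences into factors of $\delta u$ multiplied by one additional $u$-derivative'' is not available: for the $h_{uuuu}(\eta_k')$ remainder this would need $h_{uuuuu}$, which is not in the hypothesis list, and for $h_{tt}(\cdot,\tau_k)$ versus $h_{tt}(\cdot,\tau_{k+1})$ there is no usable expansion at all. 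The crude triangle bound on the $D_+$ of those residuals does keep the scheme consistent to first order, but it produces a term of size $\delta t^2 B/\delta u$ from the time remainder and loses the clean $B\delta t/2 + B\delta t^2/(3\delta u)$ structure claimed; more importantly, it cannot reproduce the stated constants.

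The paper avoids this by applying $D_+$ \emph{first} and only then Taylor-expanding $(D_+\pi(h))_i^{j+1}-(D_+\pi(h))_i^j$ in $t$. Because $\partial_t^\ell(D_+\pi(h))_i = D_+(\partial_t^\ell h)_i$, the time remainders become forward differences $D_+h_{tt}$, $D_+h_{ttt}$ at a \emph{single} intermediate time $\tau$, and $|D_+h_{tt}|\le B$ follows directly from the bound on $h_{utt}$ while $|D_+h_{ttt}|\le 2B/\delta u$ is the crude bound — giving exactly $B\delta t/2 + B\delta t^2/(3\delta u)$. The remaining spatial term $D_+ h_t - D_+ F(\text{disc})$ is then handled by the decomposition into $E_1,E_2,E_3$ built from the symmetric functionals
\[
  F_1(x_1,x_2,y,z)=\frac{y}{z}\frac{1-x_1x_2}{(1+x_1^2)(1+x_2^2)},\quad
  F_2(x_1,x_2,y_1,y_2)=y_1y_2\frac{x_1+x_2}{(1+x_1^2)(1+x_2^2)},\quad
  F_3(x,y)=\frac{y}{1+x^2},
\]
each of which is uniformly Lipschitz in its first slot, so every difference is controlled by the derivative bounds up to fourth order only. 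Your outline calls this ``bookkeeping,'' but the choice of these particular functionals, with one continuous and one discrete argument slot, is exactly what replaces the unavailable fifth derivative, and it is the real content of the interior proof. I would reorganise your interior argument around $D_+$-then-Taylor and adopt some explicit analogue of the $E_1,E_2,E_3$ split.
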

  \begin{proof}
    Our boundary condition implies that $h_{tu}(0,t)=0$
    thus repeated use of Taylor's theorem with remainder, and
    our assumptions implies that
    \begin{align*}
      \frac{1}{\delta t}\abs{(D_+(\pi(h) - v))^{j+1}_0}
      &\leq 
        \delta u\,\frac{B}{2}
        +
        \delta t\, \frac{B}{2}
        +
        \frac{\delta t^2}{\delta u}\frac{B}{3},
    \end{align*}
    as required.

    Now, let $i\in S\setminus\{0,n\}$.
    We have
    \begin{align*}
      \delta u\left(
        (D_+\pi(h))^{j+1}_i 
        - (D_+v)^{j+1}_i
        \right)
      &=
        \pi(h)^{j+1}_{i+1}
        -
        \pi(h)^{j+1}_i 
        -
        \pi(h)^{j}_{i+1}
        +
        \pi(h)^{j}_i\\
      &\hspace{-4cm}
        -\delta t 
        F((D_0\pi(h))_{i+1}^{j}, (D^2\pi(h))_{i+1}^{j}, L(\pi(h)_{\cdot}^{j}))
        +\delta t 
        F((D_0\pi(h))_i^{j}, (D^2\pi(h))_i^{j}, L(\pi(h)_\cdot^{j})),
    \end{align*}
    So that, by assumption,
    \begin{align*}
        \frac{1}{\delta t}\abs{(D_+(\pi(h) -v))^{j+1}_i} 
      &\leq
        \abs{E_1}
        +
        \abs{E_2}
        +
        \abs{E_3}
        +
        B\frac{\delta u}{2}
        +
        B\frac{\delta t}{2}
        +
        \frac{1}{3}B\frac{\delta t^2}{\delta u},
    \end{align*}
    where, with implicit evaluation of all functions
    at $(i\delta u, j\delta t)$,
    \begin{align*}
        E_1 &= F_1\biggl(h_u, h_u, h_{uu}, L[h(\cdot,j\delta t)]\biggr)\\
      &\hspace{1cm}
          -
        F_1\biggl((D_0\pi(h))_{i+1}^j, (D_0\pi(h))_{i}^j, 
          (D_0D_+\pi(h))^j_i, L[\pi(h)^{j}_\cdot]\biggr),\\
      E_2&=
        F_2\biggl(h_u, h_u, h_{uu}, h_{uu}\biggr)\\
      &\hspace{1cm}
        -
        F_2\biggl((D_0\pi(h))_{i+1}^j, (D_0\pi(h))_{i}^j,
          (D_0D_+\pi(h))^j_i, (D^2\pi(h))_{i+1}^j\biggr),\\
      E_3&=
        F_3\biggl(h_u, h_{uuu}\biggr)
        -
        F_3\biggl((D_0\pi(h))_{i+1}^j,(D^2D_+\pi(h))^j_i\biggr),
    \end{align*}
    and
    \begin{gather*}
      F_1(x_1,x_2,y,z) = \frac{y}{z}\frac{1-x_1x_2}{(1+x_1^2)(1+x_2^2)},
      \quad\quad
      F_2(x_1,x_2,y_1, y_2) = y_1y_2\frac{x_1 + x_2}{(1+x_1^2)(1+x_2^2)},\\
      F_3(x,y)  = \frac{y}{1+x^2}.
    \end{gather*}
    %\begin{align*}
        %E_1 &= F_1\biggl(h_u(i\delta u, j\delta t), h_u(i\delta u, j\delta t), 
          %h_{uu}(i\delta u, j\delta t), L[h(\cdot,j\delta t)]\biggr)\\
      %&\hspace{1cm}
          %-
        %F_1\biggl((D_0\pi(h))_{i+1}^j, (D_0\pi(h))_{i}^j, 
          %(D_0D_+\pi(h))^j_i, L[\pi(h)^{j}_\cdot]\biggr)\\
      %E_2&=
        %F_2\biggl(h_u(i\delta u, j\delta t), h_u(i\delta u, j\delta t), 
          %h_{uu}(i\delta u, j\delta t), h_{uu}(i\delta u, j\delta t)\biggr)\\
      %&\hspace{1cm}
        %-
        %F_2\biggl((D_0\pi(h))_{i+1}^j, (D_0\pi(h))_{i}^j,
          %(D_0D_+\pi(h))^j_i, (D^2\pi(h))_{i+1}^j\biggr)\\
      %E_3&=
        %F_3\biggl(h_u(i\delta u,j\delta t), h_{uuu}(i\delta u, j\delta t)\biggr)
        %-
        %F_3\biggl((D_0\pi(h))_{i+1}^j,(D^2D_+\pi(h))^j_i\biggr).
    %\end{align*}

    We continue by analysing $E_1$.
    Since
    \[
      \abs{\frac{1 - x_1x_2}{(1+x_1^2)(1+x_2^2)}
      -
      \frac{1 - a_1x_2}{(1+a_1^2)(1+x_2^2)}}
      \leq \abs{a_1 - x_1}
    \]
    and as $F_1(x_1,x_2,y,z)=F_1(x_2,x_1,y,z)$
    we have, by assumption, and repeated use of Taylor's theorem
    with remainder that
    \begin{align*}
          \abs{E_1}
      &\leq
          \delta u
          \biggl(
            \frac{5}{6}\frac{B}{\rho_0}
            +
            B^2
            \left(
              \frac{1}{\rho_0}
              +\frac{3}{2}\rho_0
            \right)
          \biggr)
          +
          (\delta u)^2
          \biggl(
            \frac{5}{6}\frac{B^2}{\rho_0}
            +
            \frac{3}{4}\rho_0 B^2
          \biggr).
    \end{align*}
    We now study the $E_2$ term.
    As
    \[
      \abs{\frac{x_1 + x_2}{(1+ x_1^2)(1+x_2^2)}
      -
      \frac{a_1 + x_2}{(1+ a_1^2)(1+x_2^2)}}
      \leq
      \abs{a_1 - x_1}
    \]
    and as $F_2$ is symmetric in its first two arguments
    we have, using similar logic as above,
    \begin{align*}
        \abs{E_2}
      &\leq
        \delta u B^2
        \biggl(
          B 
          + 
          \frac{11}{6}
        \biggr)
        +
        (\delta u)^2
        B^2
        \biggl(
          \frac{13}{12} 
          + 
          \frac{5}{6}B
        \biggr)
        +
        (\delta u)^3\frac{9}{24}B^2
    \end{align*}
    We now study the $E_3$ term.
    As
    \[
      \abs{\frac{1}{1+x^2}
      -
      \frac{1}{1+a^2}}
      =
      \abs{a-x}
    \]
    we have, similarly to above,
    that
    \[
      \abs{E_3}
      \leq 
        \delta u B(1 + B)
        +
        \frac{2}{3}(\delta u)^2 B^2.
    \]
    Therefore the result holds.
  \end{proof}

  \begin{thm}
    \label{thm_consistency}
    Let $h$ be a solution to equation \eqref{eqgraph}
    in $C^4([0,\rho_0]\times[0,T))$ so that
    for all $t\in[0,T)$,
    $h(\rho_0, t)=0$ and $h_0(0,t)=0$.
    Fix $j\in T$.
    Let $v$ be the function, defined on the mesh
    $S\times\{j\delta t, (j+1)\delta t\}$,
    so that 
    $v_i^{j}=\pi(h)_i^{j}$
    and
    \begin{align*}
      v_0^{j+1}&=v_0^j + \delta t\,(D_t v)^j_1,\\
      v_i^{j+1}&= v_i^{j} + 
        \delta t\, F((D_0\pi(h))_i^{j}, (D^2\pi(h))_i^{j}, L[(\pi(h)_\cdot^{j})]),\\
      v_n^{j+1}&=0,
    \end{align*}
    where $F$ is as defined in Theorem \ref{thm_consistency_d+}.

    If there exists $B\in\R^+$ so that, for all 
    $x\in [0,\rho_0], j\in T\setminus\{0\}$,
    \[
      \max\left\{
        \abs{h_{uu}(x, j\delta t)},
        \abs{h_{uuu}(x, j\delta t)},
        \abs{h_{uuuu}(x, j\delta t)},
        \abs{h_{tt}(x, j\delta t)}
      \right\}\leq B
    \]
    then, 
    \begin{align}
      \label{eq_con_0}
      \frac{1}{\delta t}\abs{\pi(h)^{j+1}_0 - v^{j+1}_0}
      &\leq
        \delta u B
        \left(
          1
          +
          B
          +
          \frac{5}{2\rho_0}
        \right)
        +
        (\delta u)^2
        B
        \left(
          \frac{2}{3}B
          +
          \frac{2}{3\rho_0}
          +
          \frac{3}{4}
        \right)
        +
        \delta t\frac{B}{2},
    \end{align}
    $\abs{\pi(h)_n^j}=0$,
    and,
    for all $i\in S\setminus\{0,n\}$,
    \begin{align}
      \label{eq_con_k}
      \frac{1}{\delta t}\abs{\pi(h)^{j+1}_i - v^{j+1}_i}
      &\leq
        \delta u\frac{3}{2\rho_0}B
        +
        (\delta u)^2
        B
        \left(
          \frac{1}{12}
          +
          \frac{B}{6}
          +
          \frac{1}{6\rho_0}
        \right)
        +
        \delta t\frac{B}{2}.
    \end{align}
  \end{thm}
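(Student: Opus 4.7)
The plan is to mimic Theorem \ref{thm_consistency_d+} but applied directly to the scheme itself rather than to its $D_+$ derivative. The trivial case $|\pi(h)^j_n|=0$ is immediate from the Dirichlet condition $h(\rho_0,t)=0$. For interior nodes $i \in S \setminus \{0,n\}$, I would split
\[
\pi(h)^{j+1}_i - v^{j+1}_i = \bigl[\pi(h)^{j+1}_i - \pi(h)^j_i - \delta t\, h_t(i\delta u, j\delta t)\bigr] + \delta t\bigl[h_t(i\delta u, j\delta t) - F(\ldots)\bigr],
\]
where $F(\ldots)$ abbreviates $F((D_0\pi(h))_i^j, (D^2\pi(h))_i^j, L[\pi(h)_\cdot^j])$. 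Taylor's theorem in $t$ bounds the first bracket by $B\delta t^2/2$ using $|h_{tt}|\le B$. For the second bracket, use the PDE to identify $h_t = F(h_u, h_{uu}, L[h(\cdot,j\delta t)])$ and then apply Lipschitz continuity of $F$ in each of its three arguments. The spatial Taylor remainders give $(D_0\pi(h))^j_i - h_u(i\delta u,j\delta t) = O((\delta u)^2 B)$ and $(D^2\pi(h))^j_i - h_{uu}(i\delta u,j\delta t) = O((\delta u)^2 B)$, while Lemma \ref{lem_length_diff_bound} supplies $|1/L[h(\cdot,j\delta t)] - 1/L[\pi(h)^j_\cdot]| = O(\delta u B/\rho_0)$; this last is the source of the leading $\delta u\,\tfrac{3}{2\rho_0}B$ term in \eqref{eq_con_k}.

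For the boundary node $i=0$, the scheme uses $(D_t v)^j_1$ rather than $(D_t v)^j_0$, so the analogous decomposition compares $h_t(0,j\delta t)$ with $F$ evaluated at discrete derivatives at $u=\delta u$. I would Taylor-expand each of $h_u(\delta u,\cdot)$, $h_{uu}(\delta u,\cdot)$, and $L[h(\cdot,j\delta t)]$ relative to their values at $u=0$. The boundary condition $h_u(0,t)\equiv 0$ implies $h_u(\delta u, j\delta t) = O(\delta u B)$, so the first argument of $F$ only shifts by $O(\delta u B)$; combining this with the second-argument shift $O(\delta u B)$ and the length error $O(\delta u B/\rho_0)$ through the Lipschitz bounds on $F$ gives overall $\delta u B(1 + B + 1/\rho_0)$ contributions. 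These account for the enlarged coefficient $5/(2\rho_0)$ in \eqref{eq_con_0} relative to the interior estimate, plus the usual $O((\delta u)^2 B)$ and $O(\delta t B)$ remainders.

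The main obstacle is purely mechanical: careful bookkeeping of Taylor remainders (second-order in $\delta u$, second-order in $\delta t$, and first-order mixed contributions) together with explicit computation of the Lipschitz constants $|\partial_x F|$, $|\partial_y F|$, $|\partial_z F|$ so that the coefficients $5/(2\rho_0)$, $3/(2\rho_0)$, $2/(3\rho_0)$, and the $B$- and $\rho_0$-polynomial prefactors of \eqref{eq_con_0}--\eqref{eq_con_k} reduce cleanly to their stated forms. No new idea is needed beyond what appeared in Theorem \ref{thm_consistency_d+}; the observation that $h_u(0,t)=0$ keeps the first-argument shift at $u=0$ of order $\delta u B$ rather than $O(1)$ is the one small subtlety that ensures the boundary estimate is genuinely consistent.
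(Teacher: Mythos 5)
Your decomposition is exactly the one the paper uses: a time Taylor remainder $\delta t\,h_{tt}/2$ plus a three-term Lipschitz split of $F$ in each of its arguments (the paper labels these $E_1,E_2,E_3$), with Lemma~\ref{lem_length_diff_bound} supplying the third-argument error and the observation $h_u(0,t)\equiv 0$ keeping the first-argument shift at the Neumann boundary at order $\delta u\,B$. Apart from the explicit constant bookkeeping, which you correctly flag as mechanical, this matches the paper's proof.
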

  \begin{proof}
    By definition $h(n\delta u, j\delta t)=0$ for all
    $j\in T$. Therefore  $\abs{\pi(h)_n^j}=0$.
    With an application of Taylor's theorem with remainder,
    for $i\in S\setminus\{0,n\}$,
    \begin{align*}
      \frac{1}{\delta t}\biggl(
        \pi(h)^{j+1}_i& - v^{j+1}_i
      \biggr)
      =
        \frac{\delta t}{2} h_{tt}(i\delta u, (j+\tau)\delta t)
        +
        \sum_{a=1}^3 E_a,
    \end{align*}
    where $\tau\in(0,1)$ and, with implicit evaluation of functions
    at $(i\delta u, j\delta t)$,
    \begin{align*}
      E_1 &= 
        F(h_u, h_{uu}, L[h(\cdot, j\delta t)])
        -
        F((D_0\pi(h))_i^j, h_{uu}, L[h(\cdot, j\delta t)]),\\
      E_2 &= 
        F((D_0\pi(h))_i^j, h_{uu}, L[h(\cdot, j\delta t)])
        -
        F((D_0\pi(h))_i^j, (D^2\pi(h))_i^j, L[h(\cdot, j\delta t)]),\\
      E_3 &= 
        F((D_0\pi(h))_i^j, (D^2\pi(h))_i^j, L[h(\cdot, j\delta t)])
        -
        F((D_0\pi(h))_i^j, (D^2\pi(h))_i^j, L[(\pi(h))_\cdot^j]).
    \end{align*}
    Using an analysis similar to that in the proof
    of Theorem \ref{thm_consistency_d+}
    and Lemma \ref{lem_length_diff_bound}
    gives,
    for $i\in S\setminus\{0,n\}$,
    \begin{align*}
          \frac{1}{\delta t}\abs{\pi(h)^{j+1}_i - v^{j+1}_i}
      \leq
        \delta u\frac{3}{2\rho_0}B
        +
        (\delta u)^2
        B
        \left(
          \frac{1}{12}
          +
          \frac{B}{6}
          +
          \frac{1}{6\rho_0}
        \right)
        +
        \delta t\frac{B}{2}
    \end{align*}
    as required.

    We now prove the claim for the left hand ($i=0$) boundary.
    Taylor's theorem with remainder gives
    \begin{align*}
      \frac{1}{\delta t}\left(
        \pi(h)^{j+1}_0 - v^{j+1}_0
      \right)
      &=
        \frac{\delta t}{2} h_{tt}(0, (j+\tau)\delta t)
        +
        \sum_{a=1}^3 E_a,
    \end{align*}
    where $\tau\in(0,1)$ and
    \begin{align*}
      E_1 &= 
        F(h_u(0,j\delta t), h_{uu}(0,j\delta t), L[h(\cdot, j\delta t)])
        -
        F((D_0\pi(h))_1^j, h_{uu}(0,j\delta t), L[h(\cdot, j\delta t)])\\
      E_2 &= 
        F((D_0\pi(h))_1^j, h_{uu}(0,j\delta t), L[h(\cdot, j\delta t)])
        -
        F((D_0\pi(h))_1^j, (D^2\pi(h))_1^j, L[h(\cdot, j\delta t)])\\
      E_3 &= 
        F((D_0\pi(h))_1^j, (D^2\pi(h))_1^j, L[h(\cdot, j\delta t)])
        -
        F((D_0\pi(h))_1^j, (D^2\pi(h))_1^j, L[(\pi(h))_\cdot^j]).
    \end{align*}
    Therefore, our assumptions,
    Lemma
    \ref{lem_length_diff_bound},
    and
    repeated use of Taylor's theorem with remainder gives
    \begin{align*}
          \frac{1}{\delta t}\abs{\pi(h)^{j+1}_0 - v^j_0}
      &\leq
        \delta u B
        \left(
          1
          +
          B
          +
          \frac{5}{2\rho_0}
        \right)
        +
        (\delta u)^2
        B
        \left(
          \frac{2}{3}B
          +
          \frac{2}{3\rho_0}
          +
          \frac{3}{4}
        \right)
        +
        \delta t\frac{B}{2}
    \end{align*}
    as required.
    %An application of Taylor's theorem with remainder gives,
    %\[
      %h(2\delta u, j\delta t)
      %=
      %h(0, j\delta t) 
      %+ 2\delta u\,h_u(0,j\delta t)
      %+ 2(\delta u)^2 h_{uu}(2\tau\delta t, j\delta t),
    %\]
    %for some $\tau\in(0,1)$.
    %Thus
    %\[
      %\abs{E_1} \leq C \delta u\abs{h_{uu}(2\tau\delta t, j\delta t)}
      %\leq CB\delta u
    %\]
    %Similarly, 
    %since
    %\begin{align*}
      %h(\delta u, j\delta t)
      %&=
        %h(0, j\delta t) 
        %+ \delta u\,h_u(0,j\delta t)
        %+ \frac{(\delta u)^2}{2} h_{uu}(0, j\delta t)
        %+ \frac{(\delta u)^3}{6} h_{uuu}(\tau_1\delta u, j\delta t),\\
      %h(2\delta u, j\delta t)
      %&=
        %h(0, j\delta t) 
        %+ 2\delta u\,h_u(0,j\delta t)
        %+ 2{(\delta u)^2} h_{uu}(0, j\delta t)
        %+ \frac{4(\delta u)^3}{3} h_{uuu}(2\tau_2\delta u, j\delta t),
    %\end{align*}
    %for $\tau_1,\tau_2\in(0,1)$
    %we can calculate that
    %\begin{align*}
        %E_2
      %&\leq
        %h_{uu}(0,j\delta t)
        %-
        %(D^2\pi(h))_1^j\\
      %&=
      %\frac{\delta u}{3}\left(
        %4h_{uuu}(2\tau_2\delta u, j\delta t)
        %-
        %h_{uuu}(\tau_1\delta u, j\delta t)
      %\right).
    %\end{align*}
    %Therefore
    %\begin{align*}
      %&\abs{\frac{1}{\delta t}\biggl(
        %\pi(h)^{j+1}_0 - \pi(h)^j_0
        %-\delta t 
        %F((D_0\pi(h))_1^j, (D^2\pi(h))_1^j, L[(\pi(h))_1^j])
      %\biggr)}\\
      %&\hspace{1cm}\leq
        %\frac{\delta t}{2} \abs{h_{tt}(0, (j+\tau)\delta t)}
        %+
        %\sum_{a=1}^3 \abs{E_a}\\
      %&\hspace{1cm}\leq
        %\left(C + \frac{5}{3} + \frac{3}{2\rho_0}\right)B\delta u
        %+
        %\frac{B}{2}\delta t
    %\end{align*}
    %as required.
  \end{proof}

  Rather than prove stability and then show that stability and
  consistency imply convergence, we exploit the known bounds
  on the second derivative of solutions to the modelled PDE
  to prove convergence directly. Just as for consistency we
  require two convergence results, one for the forward spatial
  derivative and one for solutions.

  \begin{lem}
    \label{lem_bounds_on_ratios}
    There exists $K\in(0,\infty)$ so that
    for all $w,x,y,z\in\R$ the following inequalities hold
    \begin{gather*}
      \abs{\frac{{x + y}}{(1+x^2)(1+y^2)}}\leq K,\\[1ex]
      \abs{\frac{-1 + w y + z(w + y) - z^2 + w y z^2 + x^2 z (w + y)}%
        {(1+w^2)(1+x^2)(1+y^2)(1+z^2)}}\leq K\\[1ex]
      \abs{\frac{x + z + 2w - w^2y^2(x + z) + 2w^2y - 2wxz - 2w^2xyz}%
        {(1+w^2)(1+x^2)(1+y^2)(1+z^2)}}\leq K\\[1ex]
    \end{gather*}
  \end{lem}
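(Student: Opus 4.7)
The plan is to observe that each of the three quotients is a rational function whose numerator decomposes into monomials $w^a x^b y^c z^d$ with exponents $a,b,c,d\in\{0,1,2\}$, and whose denominator is exactly $(1+w^2)(1+x^2)(1+y^2)(1+z^2)$ (with the understanding that a missing variable contributes a factor of 1). The key elementary fact I would use is the inequality
\[
|v|^k \le 1+v^2 \qquad \text{for } k\in\{0,1,2\},
\]
which for $k=0$ and $k=2$ is immediate and for $k=1$ follows from the AM--GM bound $2|v|\le 1+v^2$. Multiplying four such inequalities together gives
\[
|w^a x^b y^c z^d| \le (1+w^2)(1+x^2)(1+y^2)(1+z^2)
\]
whenever $a,b,c,d\in\{0,1,2\}$.

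First I would dispatch the first bound: applying the triangle inequality and the elementary estimate $|v|/(1+v^2)\le 1/2$ yields
\[
\left|\frac{x+y}{(1+x^2)(1+y^2)}\right|
\le \frac{|x|}{1+x^2}+\frac{|y|}{1+y^2}\le 1.
\]
Next I would expand the numerators of the second and third expressions as sums of monomials, and verify by inspection that each monomial has every exponent in $\{0,1,2\}$. For the second expression the monomials are $1,\,wy,\,wz,\,yz,\,z^2,\,wyz^2,\,wx^2z,\,x^2yz$; for the third they are $w,\,x,\,z,\,w^2y,\,wxz,\,w^2y^2x,\,w^2y^2z,\,w^2xyz$. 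In every case each variable appears to degree at most~$2$. Applying the triangle inequality to the numerator and dividing by the denominator, each term is bounded by the absolute value of its coefficient, so the whole expression is bounded by the sum of the absolute values of the coefficients.

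Taking $K$ to be the maximum of $1$, the sum of absolute coefficients of the second numerator, and the sum of absolute coefficients of the third numerator produces the claimed uniform constant. The only potential pitfall is purely clerical, namely making sure that no monomial in either numerator exceeds degree $2$ in any single variable; once this bookkeeping is checked the lemma is completely elementary and contains no analytic content.
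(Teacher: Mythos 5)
Your proof is correct, and it takes a different route from the paper. The paper's own proof is a soft two-line argument: the denominators never vanish, so each quotient is continuous, and ``outside a large enough compact set'' the ratios are less than $1$, whence boundedness follows from compactness. Your argument is instead fully explicit: you use $|v|^k \le 1+v^2$ for $k\in\{0,1,2\}$ (the $k=1$ case from $2|v|\le 1+v^2$), note that every monomial $w^a x^b y^c z^d$ appearing in the second and third numerators has all exponents in $\{0,1,2\}$ (I checked your monomial lists $1, wy, wz, yz, z^2, wyz^2, wx^2z, x^2yz$ and $x, z, 2w, w^2y^2x, w^2y^2z, 2w^2y, 2wxz, 2w^2xyz$ against the expansions, and they are right), and conclude via the triangle inequality that each quotient is bounded by the sum of the absolute values of its coefficients; the first quotient you handle directly with $|v|/(1+v^2)\le \tfrac12$. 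What your approach buys is a concrete constant (one may take $K=\max\{1,8,12\}=12$), which could in principle be propagated into the explicit error constants $C_3,C_4$ of the subsequent convergence theorems, whereas the paper's compactness argument yields only existence of some $K$; it also quietly supplies the degree-counting justification that the paper's claim about behaviour outside a compact set leaves implicit. The paper's proof, in exchange, is shorter and requires no bookkeeping. Your only stated caveat---that no monomial exceeds degree $2$ in any single variable---is indeed the whole check, and it holds.
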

  \begin{proof}
    Outside of a large enough compact set all three ratios of polynomials
    will be less than $1$. Since the denominators are never zero the
    ratios are continuous and hence the result holds.
  \end{proof}

  \begin{thm}
    \label{thm_D_+_diff_bound}
    Let $h$ be a solution to equation \eqref{eqgraph}
    in $C^4([0,\rho_0]\times[0,T))$ so that
    for all $t\in[0,T)$,
    $h(\rho_0, t)=0$ and $h_0(0,t)=0$.
    Let $w\in F(M,\R)$ be a solution to 
    the equation \eqref{eq_full_scheme_2}
    and the initial and boundary
    equations
    \eqref{eq_full_scheme_2_initial},
    \eqref{eq_full_scheme_2_boundary_n}, and
    \eqref{eq_num_boundary_condition}.

    If $h$ satisfies the conditions of Theorems
    \ref{thm_consistency_d+} involving the constant $B$
    and if $\delta t, \delta u$
    satisfy the conditions of Theorem \ref{thm_inf_bound_d+w_numeric}
    then, 
    \begin{align*}
      \abs{(D_+(\pi(h) - w))_0^{j+1}}
      \leq \delta t\delta u\, C_1 +
        \abs{(D_+(\pi(h) - w))_0^{j}}
    \end{align*}
    where $\delta u\, C_1$ is the right hand side of
    equation \eqref{eq_d+_con_0} 
    and,
    for $k\in S\setminus\{0,n\}$,
    \begin{align*}
      \abs{D_+(v-w)_k^{j+1}}
        &\leq 
          \delta t\,
          \delta u\,
          C_2
          +
          \left(
            1 + 3 K B\delta t
            \left(
              \frac{1}{2}
              + 
              B
              +
              \frac{1}{\rho_0}
            \right)
          \right)
          \norm{D_+(v-w)^j_\cdot}_{L^\infty}
    \end{align*}
    where $\delta u\, C_2$
    is the right hand side of equation
    \eqref{eq_d+_con_k}, and
    where $K$ is the constant defined in
    Lemma \ref{lem_bounds_on_ratios}.
    In particular,
    \begin{align}
      \label{eq_d+_norm_global_bound}
        &\norm{(D_+(\pi(h) - w))^{j+1}_\cdot}_{L^\infty}
      \leq\\
        &\hspace{2cm}
        \delta u\frac{C_3}{C_4}
        \left(\exp\left(C_4 T\right) -1\right)
        +
        \exp\left(
          C_4 T
        \right)
        \norm{(D_+(\pi(h) - w))^{0}_\cdot}_{L^\infty},\nonumber
    \end{align}
    where $C_3=\max\{C_1,C_2\}$ and
    \[
      C_4 = 3 K B
            \left(
              \frac{1}{2}
              + 
              B
              +
              \frac{1}{\rho_0}
            \right)
    \]
  \end{thm}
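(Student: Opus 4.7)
The plan is to split the forward-difference error into a consistency piece and a stability piece via
\[
D_+(\pi(h)-w)^{j+1} = D_+(\pi(h)-v)^{j+1} + D_+(v-w)^{j+1},
\]
using that $v^j=\pi(h)^j$ so the two match at step $j$. Theorem \ref{thm_consistency_d+} will control the first term directly, and the real work is inductively controlling $D_+(v-w)$ via a discrete maximum principle.

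For the boundary estimate at $k=0$ I would first observe a structural identity: $(D_+w)_0^{j+1}=(D_+w)_0^j$. This is immediate from the scheme, since both $w_0^{j+1}-w_0^j$ and $w_1^{j+1}-w_1^j$ equal $\delta t\,(D_t w)_1^j$ by \eqref{eq_num_boundary_condition} and \eqref{eq_full_scheme_2}. Therefore
\[
D_+(\pi(h)-w)_0^{j+1} = D_+(\pi(h)-v)_0^{j+1} + D_+(\pi(h)-w)_0^{j},
\]
and \eqref{eq_d+_con_0} bounds the consistency term by $\delta t\,\delta u\,C_1$, giving the first claim.

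The interior estimate is the substantial step. I apply Proposition \ref{prop_DtD+_is} once to $w$ and, in a one-step sense with data $\pi(h)^j$, to $v$; subtracting gives
\[
(D_t D_+(v-w))_k^{j+1} = {}^w\!X_k^j\,(D^2D_+(v-w))_k^j + {}^w\!Y_k^j\,(D_0D_+(v-w))_k^j + R_k^j,
\]
where the remainder $R_k^j$ collects the differences $({}^v\!X-{}^w\!X)(D^2D_+v)^j$ and $({}^v\!Y-{}^w\!Y)(D_0D_+v)^j$, together with the non-local contribution that Lemma \ref{lem_L_dif_numer} converts into a multiple of $\|D_+(v-w)^j\|_{L^\infty}$. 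The CFL hypotheses of Theorem \ref{thm_inf_bound_d+w_numeric} are precisely those required to apply the discrete maximum principle of Lemma \ref{lem_max_inf_numerical_gen}, which yields a one-step $L^\infty$ bound in terms of the previous time level plus $\delta t\,\|R\|_{L^\infty}$. To bound $R$ I would use Lemma \ref{lem_bounds_on_ratios}: each of the three rational expressions arising in ${}^v\!X-{}^w\!X$, the local part of ${}^v\!Y-{}^w\!Y$, and the non-local correction is dominated by $K$; the derivative factors of $v$ are bounded by $B$ from the consistency hypothesis; and the resulting linear dependence on $\|D_+(v-w)^j\|_{L^\infty}$ carries coefficient $3KB(\tfrac12+B+\tfrac1{\rho_0})$. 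Adding the consistency piece from \eqref{eq_d+_con_k} yields the stated interior recursion.

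Finally, letting $a_j=\|D_+(\pi(h)-w)^j\|_{L^\infty}$, both inequalities combine into $a_{j+1}\le(1+C_4\delta t)a_j + C_3\delta u\,\delta t$, and discrete Gronwall gives
\[
a_j \le (1+C_4\delta t)^j a_0 + \frac{C_3\delta u}{C_4}\bigl((1+C_4\delta t)^j-1\bigr) \le e^{C_4 T}a_0 + \frac{C_3\delta u}{C_4}\bigl(e^{C_4 T}-1\bigr),
\]
which is \eqref{eq_d+_norm_global_bound}. The main obstacle is the interior step: producing the explicit Lipschitz constant $3KB(\tfrac12+B+\tfrac1{\rho_0})$ requires careful accounting of the three independent places where $D_+w$ enters non-linearly in the evolution of $D_+w$, and uses the uniform gradient bound from Theorem \ref{thm_inf_bound_d+w_numeric} to keep denominators away from zero. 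Once Lemma \ref{lem_bounds_on_ratios} is invoked to absorb the rational factors into the single constant $K$, the remainder of the argument is routine discrete Gronwall bookkeeping.
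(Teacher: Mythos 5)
Your proposal is correct and follows essentially the same route as the paper: decompose $D_+(\pi(h)-w)^{j+1}$ into a consistency part $D_+(\pi(h)-v)^{j+1}$ bounded by Theorem~\ref{thm_consistency_d+} and a stability part $D_+(v-w)^{j+1}$ handled via Proposition~\ref{prop_DtD+_is}, Lemma~\ref{lem_bounds_on_ratios}, and the discrete maximum principle of Lemma~\ref{lem_max_inf_numerical_gen}, with the boundary identity $(D_+\cdot)_0^{j+1}=(D_+\cdot)_0^{j}$ killing the $k=0$ case and a discrete Gronwall/telescoping step finishing. The one small omission is that for your ``therefore'' at $k=0$ you also need the same boundary identity for $v$, not only for $w$, but this follows by the identical argument from the definition of $v_0^{j+1}$.
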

  \begin{proof}
    Fix $j\in T\setminus\{m\}$.
    Let $v$ be the function, defined on the mesh
    $S\times\{(j)\delta t, (j+1)\delta t\}$,
    so that 
    $v_i^{j}=\pi(h)_i^{j}$
    and
    \begin{align*}
      v_0^{j+1}&=v_0^j + \delta t\,(D_t v)^j_1,\\
      v_i^{j+1}&= v_i^{j} + 
        \delta t F((D_0\pi(h))_i^{j}, (D^2\pi(h))_i^{j}, L[(\pi(h)_\cdot^{j})]),\\
      v_n^{j+1}&=0.
    \end{align*}

    For all $k\in S\setminus\{n\}$,
    we have
    \begin{align*}
      \abs{D_+(\pi(h) - w_k)^{j+1}}
      \leq
      \abs{D_+(\pi(h) - v)^{j+1}_k} 
      + 
      \abs{D_+(v - w)_k^{j+1}}.
    \end{align*}
    Theorem \ref{thm_consistency_d+} allows us to control the first term on the
    right hand side.
    By definition of $v$ and equation \eqref{eq_num_boundary_condition}
    we can compute that
    $(D_+(v-w))^{j+1}_0 = (D_+(v-w))^{j}_0$.
    This gives us the claimed result for $k=0$.

    We shall now estimate the $\abs{D_+(v - w)_k^{j+1}}$
    term in a manor 
    similar to the proof of Lemma \ref{lem_max_inf_numerical_gen}.
    By Proposition \ref{prop_DtD+_is}
    we know that, for all $k\in S\setminus\{0,n\}$,
    \begin{align*}
        {(D_tD_+(v - w))_k^{j}}
      &=
        \left(
          {}^v\!X_k
          -
          {}^w\!X_k
        \right)
        (D^2D_+v)^j_k
        +
        {}^w\!X_k
        (D^2D_+(v-w))^j_k\\
      &\hspace{1cm}
        +
        \left(
          {}^vY_k
          -
          {}^wY_k
        \right)
        (D_0D_+v)^j_k
        +
        {}^wY_k
        (D_0D_+(v-w))^j_k.
    \end{align*}
    where ${}^v\!X^j_k, {}^vY^j_k$ and
    ${}^w\!X^j_k, {}^wY^j_k$ are defined in
    Proposition \ref{prop_DtD+_is}.
    We can compute that
    \begin{align*}
        {}^v\!X_k
        -
        {}^w\!X_k
      &=
          -
          \frac{1}{2}
          \left(
            (D_0w)_k^j + (D_0v)_k^j
          \right)
          {}^w\!A_k {}^v\!A_k
          (D_0(v-w))_k^j\\
        &\hspace{1cm}
          -
          \frac{1}{2}
          \left(
            (D_0w)_{k+1}^j + (D_0v)_{k+1}^j
          \right)
          {}^w\!A_{k+1} {}^v\!A_{k+1}
          (D_0(v-w))_{k+1}^j.
    \end{align*}
    We also know that
    \begin{align*}
        {}^vY^j_k 
        -
        {}^wY^j_k 
      &=
          {\left(D_+\left({}^v\!A - {}^w\!A\right)\right)_{k}}\\
        &\hspace{1cm}
          +
          \frac{1}{2L(w^j_\cdot)}
          \left(
            {{}^v\!A_{k} - {}^w\!A_{k}}
            +
            {{}^v\!A_{k+1} - {}^w\!A_{k+1}}
          \right)\\
        &\hspace{-2cm}
          -
          \frac{1}{2L(w^j_\cdot)}
          \left(
            ((D_0v)_{k}^j + (D_0v)_{k+1}^j)^2\,{}^v\!A_k\,{}^v\!A_{k+1}
            -
            ((D_0w)_{k}^j + (D_0w)_{k+1}^j)^2\,{}^w\!A_k\,{}^w\!A_{k+1}
          \right).
    \end{align*}
    We will
    rewrite the three terms on the right hand side of this
    equation.

    We begin with the first term.
    Some algebra shows that
    \begin{align*}
          (D_+{}^v\!A)_k - (D_+{}^w\!A)_{k}
      &=\\
      &\hspace{-2cm}
          {}^w\!A_{k+1}
          {}^w\!A_{k}
          {}^v\!A_{k+1} 
          {}^v\!A_{k} 
          \biggl(
            A_1 D_0(v-w)_k + A_2 D_0(v-w)_{k+1}
          \biggr)
          D_+D_0v_k\\
        &\hspace{1cm}
          -
          (D_0w_k + D_0w_{k+1})
          {}^w\!A_{k+1}
          {}^w\!A_{k}\,
          D_+D_0(v-w)_k,
    \end{align*}
    where $A_1$ and $A_2$ are such that
    $\abs{A_1 {}^w\!A_{k+1} {}^w\!A_{k} {}^v\!A_{k+1} {}^v\!A_{k}}\leq K$,
    $\abs{A_2 {}^w\!A_{k+1} {}^w\!A_{k} {}^v\!A_{k+1} {}^v\!A_{k}}\leq K$
    where $K$ is the constant given in Lemma
    \ref{lem_bounds_on_ratios}.
    %\begin{align*}
      %A_1 &=
            %-1 + D_0v_k\,D_0w_k
            %+ 
            %D_0w_{k+1}
            %\left(
              %D_0v_k
              %+
              %D_0w_k
            %\right)\\
        %&\hspace{1cm}
            %-
            %D_0w_{k+1}^2
            %+
            %D_0v_k\,D_0w_k\,
            %D_0w_{k+1}^2\\
        %&\hspace{1cm}
            %+
            %D_0v_{k+1}\,D_0w_{k+1}\,
            %D_0v_{k+1}
            %\left(
              %D_0v_k
              %+
              %D_0w_k
            %\right)
    %\end{align*}
    %and
    %\begin{align*}
      %A_2 &=
            %-1 
            %+ 
            %D_0w_{k+1}\,D_0v_{k+1}
            %-
            %D_0w_k^2
            %+
            %D_0w_k
            %\left(
              %D_0v_{k+1}
              %+
              %D_0w_{k+1}
            %\right)\\
        %&\hspace{1cm}
            %+
            %D_0v_k\,D_0w_k
            %\biggl(
              %D_0v_k
              %\left(
                %D_0v_{k+1}
                %+
                %D_0w_{k+1}
              %\right)
            %\biggr)\\
        %&\hspace{1cm}
            %+
            %D_0v_{k+1}\,D_0w_{k+1}\, D_0w_k^2.
    %\end{align*}
    For the second term we have
    \begin{align*}
        \frac{1}{2L(w^j_\cdot)}
        &\left(
          {{}^v\!A_{k} - {}^w\!A_{k}}
          +
          {{}^v\!A_{k+1} - {}^w\!A_{k+1}}
        \right)\\
      &=
          -
          \frac{1}{2L(w^j_\cdot)}
          \left(
            (D_0w)_k^j + (D_0v)_k^j
          \right)
          {}^w\!A_k\,{}^v\!A_k
          (D_0(v-w))_k^j\\
        &\hspace{1cm}
          -
          \frac{1}{2L(w^j_\cdot)}
          \left(
            (D_0w)_{k+1}^j + (D_0v)_{k+1}^j
          \right)
          {}^w\!A_{k+1}\,{}^v\!A_{k+1}
          (D_0(v-w))_{k+1}^j.
    \end{align*}

    For the third term we can compute that
    \begin{align*}
          &-\frac{1}{2L(w^j_\cdot)}
          \biggl(
            ((D_0v)_{k}^j + (D_0v)_{k+1}^j)^2\,{}^v\!A_k\,{}^v\!A_{k+1}
            -
            ((D_0w)_{k}^j + (D_0w)_{k+1}^j)^2\,{}^w\!A_k\,{}^w\!A_{k+1}
          \biggr)\\
      &\hspace{1cm}= 
        -
        \frac{1}{2L(w^j_\cdot)}
        {}^v\!A_k\,{}^v\!A_{k+1}{}^w\!A_k\,{}^w\!A_{k+1}
        \biggl(
          Q_1 D_0(v-w)_k + Q_{2} D_0(v-w)_{k+1}
        \biggr),
    \end{align*}
    where $Q_1,Q_2$ are such that
    $\abs{Q_1 {}^v\!A_k\,{}^v\!A_{k+1}{}^w\!A_k\,{}^w\!A_{k+1}}\leq K$,
    and
    $\abs{Q_2 {}^v\!A_k\,{}^v\!A_{k+1}{}^w\!A_k\,{}^w\!A_{k+1}}\leq K$,
    where $K$ is the constant given in Lemma
    \ref{lem_bounds_on_ratios}.
    %\begin{align*}
      %Q_k &=
          %(D_0v)_k^j + (D_0w)_k^j
          %+ 2(D_0w)_{k+1}^j\\
        %&\hspace{1cm}
          %- ((D_0v)_{k+1}^j)^2 ((D_0w)_{k+1}^j)^2
          %((D_0w)_k^j + (D_0v)_k^j)\\
        %&\hspace{1cm}
          %- 2 (D_0v)_k^j\, (D_0w)_k^j\, 
            %(D_0v)_{k+1}^j\\
        %&\hspace{1cm}
          %+ 2 (D_0v)_{k+1}^j\, (D_0w)_{k+1}^j
            %(D_0v)_{k+1}^j\\
        %&\hspace{1cm}
          %- 2 (D_0v)_k^j\, (D_0v)_{k+1}^j\, (D_0w)_k^j\, ((D_0w)_{k+1}^j)^2,
    %\end{align*}
    %and
    %\begin{align*}
      %Q_{k+1}
        %&= 
          %(D_0v)_{k+1}^j + (D_0w)_{k+1}^j
          %+ 2(D_0v)_{k}^j\\
        %&\hspace{1cm}
          %-((D_0v)_k^j)^2\,((D_0w)_k^j)^2\,
          %((D_0v)_{k+1}^j + (D_0w)_{k+1}^j)\\
        %&\hspace{1cm}
          %+ 2 (D_0v)_k^j\, (D_0w)_k^j\, (D_0v)_k^j\\
        %&\hspace{1cm}
          %- 2 (D_0v)_{k+1}^j\, (D_0w)_{k+1}^j\, (D_0v)_{k}^j\\
        %&\hspace{1cm}
          %- 2 ((D_0v)_k^j)^2\, (D_0v)_{k+1}^j\, (D_0w)_k^j\, (D_0w)_{k+1}^j,
    %\end{align*}

    If one plugs these computations into the
    original equality for
    ${(D_tD_+(v - w))_k^{j}}$ and uses the equation
    $2D_0(v-w)^j_k=D_+(v-w)_k^j+D_+(v-w)^j_{k-1}$
    then we get
    \begin{align*}
        {(D_tD_+(v - w))_k^{j}}
      &=
        (D_+(v-w))^j_{k+1}\biggl(
          \frac{1}{(\delta u)^2}{}^w\!X_k
          +
          \frac{1}{2 \delta u}{}^wY_k\\
        &\hspace{2cm}
          -
          \frac{1}{2\delta u}
          (D_0w_k + D_0w_{k+1})
          {}^w\!A_{k+1}
          {}^w\!A_{k}
          (D_0D_+v)^j_k
          + B_1
        \biggr)\\
      &+
        (D_+(v-w))^j_{k+1}\biggl(
          -\frac{2}{(\delta u)^2}{}^w\!X_k + B_2
        \biggr)\\
      &+
        (D_+(v-w))^j_{k-1}
        \biggl(
          \frac{1}{(\delta u)^2}{}^w\!X_k
          -
          \frac{1}{2\delta u}
          {}^wY_k\\
        &\hspace{2cm}
          +
          \frac{1}{2\delta u}
          (D_0w_k + D_0w_{k+1})
          {}^w\!A_{k+1}
          {}^w\!A_{k}
          (D_0D_+v)^j_k
          +B_3
        \biggr),
    \end{align*}
    where, 
    by Lemma \ref{lem_bounds_on_ratios},
    our assumptions about $B$,
    and repeated use of Taylor's theorem with remainder,
    we know that
    \[
        \max\{\abs{B_1}, \abs{B_2}, \abs{B_3}\}
      \leq
        K B
        \left(
          \frac{1}{2}
          + 
          B
          +
          \frac{1}{L(w^j\cdot)}
        \right).
    \]

    As in the proof of Theorem \ref{thm_inf_bound_d+w_numeric},
    Proposition \ref{prop_DtD+_is},
    Lemma \ref{lem_max_inf_numerical_gen}, and our assumptions,
    imply that
    \begin{align*}
        \abs{(D_+(v - w))_k^{j+1}}
      &=
        \norm{(D_+(v-w))^j_{\cdot}}_{L^\infty}\biggl(
          1
          + \delta t(\abs{B_1} + \abs{B_2}+\abs{B_3})
        \biggr)\\
      &\leq
        \norm{(D_+(v-w))^j_{\cdot}}_{L^\infty}\biggl(
          1
          + 3 K B \delta t
          \left(\frac{1}{2} + B + \frac{1}{\rho_0}\right)
        \biggr),
    \end{align*}
    as required.

    The last inequality follows by telescoping the right hand side
    of the bound for
    $\abs{(D_+(v-w)_k^{j+1}}$ and finding a bound for the resulting
    truncated geometric series.
  \end{proof}

  \begin{thm}
    \label{thm_convergence}
    Let $h$ be a solution to equation \eqref{eqgraph}
    in $C^4([0,\rho_0]\times[0,T))$ so that
    for all $t\in[0,T)$,
    $h(\rho_0, t)=0$ and $h_0(0,t)=0$.
    Let $w\in F(M,\R)$ be a solution to 
    the equation \eqref{eq_full_scheme_2}
    and the initial and boundary
    equations
    \eqref{eq_full_scheme_2_initial},
    \eqref{eq_full_scheme_2_boundary_n}, and
    \eqref{eq_num_boundary_condition}.

    If $h,\delta u,\delta t$ 
    satisfy the conditions of Theorems \ref{thm_consistency}
    and \ref{thm_D_+_diff_bound},
    with the same constant bound $B$ for all the specified derivatives
    of $h$, 
    and
%    {\bfseries WHAT HAPPENS IF $D_0w^0=0$?}
    \begin{align*}
         1 - \frac{1}{1 + \norm{(D_0w)^0_\cdot}_{L^\infty}^2}
      &\geq
          \delta u,\\
          \frac{1}{1 + \norm{(D_0w)^0_\cdot}_{L^\infty}^2}
      &\geq
          \frac{\delta u}{2}\frac{1}{\rho_0}
          +
          \frac{\delta u}{2}
          \left(
            \frac{1}{\rho_0}\left(1+\frac{(\delta u)^2}{6}\right)B
            +
            \left(1 + \frac{(\delta u)^2}{12}\right)B
          \right)
    \end{align*}
    then, for all $k\in S\setminus\{0\}$,
    \begin{align*}
        \abs{\pi(h)^{j+1}_k - w_k^{j+1}}
      &\leq
          \delta t\delta u\,C
          + 
          \norm{\pi(h)^{j}_\cdot - w_\cdot^{j}}_{L^\infty}\\
        &\hspace{1cm}
          +
          \delta t
          \frac{1}{\rho_0}
          \norm{(D_+(\pi(h)- w))_\cdot^j}_{L^\infty}.
    \end{align*}
    where $\delta u\, C$ is the
    right hand side of
    equation \eqref{eq_con_0} if $k= 0$ and
    is the right hand side of equation
    equation \eqref{eq_con_k} if $k\neq 0$.

    In particular,
    \begin{align}
      \label{eq_conv_global_bound}
        \norm{\pi(h)^{j+1}_\cdot - w_\cdot^{j+1}}_{L^\infty}
      &\leq
          \norm{\pi(h)^{0}_\cdot - w_\cdot^{0}}_{L^\infty}
          +
          \delta u\, T C\\
        &\hspace{1cm}
          +
          \delta u\,
          \frac{T}{\rho_0}
          \frac{C_3}{C_4}
          \left(\exp\left(C_4 T\right) -1\right)\nonumber\\
        &\hspace{1cm}
          +
          \frac{T}{\rho_0}
          \exp\left(
            C_4 T
          \right)
          \norm{(D_+(\pi(h) - w))^{0}_\cdot}_{L^\infty},\nonumber
    \end{align}
    where $C_3,C_4$ are defined in Theorem \eqref{thm_D_+_diff_bound}
    and $C=\max\{C_1,C_2\}$
    where
    $\delta u\, C_1$ is the right hand side of 
    equation \eqref{eq_con_0}
    and $\delta t\,C_2$ is the right hand side
    of equation \eqref{eq_con_k}.
  \end{thm}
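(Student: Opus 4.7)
The plan is to mirror the one-step splitting strategy used in Theorem \ref{thm_D_+_diff_bound}, but applied to $\pi(h)-w$ rather than to $D_+(\pi(h)-w)$. Fix $j$ and introduce the auxiliary mesh function $v$ of Theorem \ref{thm_consistency}, which takes one step with the exact spatial data $\pi(h)^{j}$ and satisfies $v^{j}=\pi(h)^{j}$. Then write
\[
  \pi(h)^{j+1}_k-w^{j+1}_k
  =\bigl(\pi(h)^{j+1}_k-v^{j+1}_k\bigr)
  +\bigl(v^{j+1}_k-w^{j+1}_k\bigr).
\]
The first term is handled directly by Theorem \ref{thm_consistency}: its absolute value is at most $\delta t\,\delta u\,C$ in the notation of the statement. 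What remains is to estimate $v^{j+1}-w^{j+1}$, which measures how the scheme propagates error over a single step.

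For this, I would linearise the difference of right-hand sides of \eqref{eq_full_scheme_2} applied to $v$ and $w$. Using the identity $F(x,y,z)=(y+x/z)/(1+x^2)$ and telescoping in the three arguments, the difference $F((D_0 v), (D^2 v), L[v])-F((D_0 w),(D^2 w),L[w])$ can be rewritten, as in Proposition \ref{prop_DtD+_is} and the proof of Theorem \ref{thm_D_+_diff_bound}, as
\[
  X_k^{j}\,(D^2(v-w))_k^{j}+Y_k^{j}\,(D_0(v-w))_k^{j}+R_k^{j},
\]
where the bounded coefficients $X_k^{j},Y_k^{j}$ are symmetric combinations of the $A$-factors evaluated on $v$ and $w$, and $R_k^{j}$ is a remainder coming only from the difference of reciprocal lengths $1/L[v^{j}_{\cdot}]-1/L[w^{j}_{\cdot}]$. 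By Lemma \ref{lem_L_dif_numer}, $R^{j}_k$ is bounded by $C\rho_0^{-1}\|D_+(v-w)^{j}_{\cdot}\|_{L^\infty}$ up to a factor controlled by the uniform $D_0$-bound from Theorem \ref{thm_inf_bound_d+w_numeric}. The two stated hypotheses on $\delta u$ relative to $1/(1+\|(D_0w)^{0}_{\cdot}\|_{L^\infty}^{2})$ and on $\delta t \le (\delta u)^2/2$ are precisely the positivity and CFL conditions of Lemma \ref{lem_max_inf_numerical_gen} for the coefficients $X_k^{j},Y_k^{j}$, using the uniform gradient control provided by Theorem \ref{thm_inf_bound_d+w_numeric}.

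Applying the discrete maximum principle Lemma \ref{lem_max_inf_numerical_gen} to $v-w$ (whose boundary data coincides with that of $\pi(h)-w$, with $v^{j+1}_n-w^{j+1}_n=0$ and $(v-w)^{j+1}_0=(v-w)^{j}_0$ from \eqref{eq_num_boundary_condition}) yields
\[
  \|v^{j+1}_{\cdot}-w^{j+1}_{\cdot}\|_{L^\infty}
  \le\|v^{j}_{\cdot}-w^{j}_{\cdot}\|_{L^\infty}
   +\delta t\,\tfrac{1}{\rho_0}\|D_+(v-w)^{j}_{\cdot}\|_{L^\infty}.
\]
Since $v^{j}=\pi(h)^{j}$, combining with the consistency bound gives the per-step estimate asserted in the theorem. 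For the global inequality \eqref{eq_conv_global_bound} I would iterate this one-step bound from $0$ to $j+1\le T/\delta t$: the consistency term telescopes to $T\delta u\,C$, the initial error term persists, and the $\|D_+(\pi(h)-w)\|$ term is summed using the bound \eqref{eq_d+_norm_global_bound} from Theorem \ref{thm_D_+_diff_bound}, which gives an exponential in $T$ times the initial $D_+$-error plus the $\delta u\,C_3/C_4$ consistency contribution.

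The routine work is the consistency and stability splitting, and the Gronwall-style summation; the substantive obstacle is the linearisation step, because the nonlocal length term couples the pointwise difference to the gradient difference. Resolving this cleanly requires invoking Lemma \ref{lem_L_dif_numer} to convert the non-local remainder into a controllable $D_+$ norm, and verifying that the hypotheses imposed on $\delta u$ in the statement genuinely produce nonnegative coefficients in the maximum-principle inequality given the uniform $\|D_0 w\|_{L^\infty}$ estimate from Theorem \ref{thm_inf_bound_d+w_numeric}. Once these are in place, Lemma \ref{lem_max_inf_numerical_gen} finishes the per-step bound, and telescoping plus Theorem \ref{thm_D_+_diff_bound} yields the global estimate.
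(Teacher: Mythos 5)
Your overall strategy is the paper's own: split $\pi(h)-w$ through the one-step auxiliary function $v$ with $v^j=\pi(h)^j$, bound $\pi(h)^{j+1}-v^{j+1}$ by the consistency Theorem \ref{thm_consistency}, linearise the difference of the scheme's right-hand sides for $v-w$ into $X_k^j(D^2(v-w))_k^j+Y_k^j(D_0(v-w))_k^j$ plus a non-local remainder controlled by Lemma \ref{lem_L_dif_numer}, run the discrete maximum-principle argument of Lemma \ref{lem_max_inf_numerical_gen} at interior points, and then telescope in $j$ feeding in \eqref{eq_d+_norm_global_bound}. All of that matches the paper and is sound.

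The genuine gap is your treatment of the left boundary point $k=0$. You assert that $(v-w)^{j+1}_0=(v-w)^{j}_0$ ``from \eqref{eq_num_boundary_condition}'', but this is false: both updates read $v_0^{j+1}=v_0^j+\delta t\,(D_tv)_1^j$ and $w_0^{j+1}=w_0^j+\delta t\,(D_tw)_1^j$, so $(v-w)_0^{j+1}=(v-w)_0^j+\delta t\,\big((D_tv)_1^j-(D_tw)_1^j\big)$, and the extra term does not vanish. The identity you are remembering is the one used in Theorem \ref{thm_D_+_diff_bound}, namely $(D_+(v-w))_0^{j+1}=(D_+(v-w))_0^j$, which holds for the \emph{forward difference} at $0$ but not for the values themselves. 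This matters because the conclusion of Lemma \ref{lem_max_inf_numerical_gen} bounds $\norm{(v-w)^{j+1}_\cdot}_{L^\infty}$ by a maximum that includes the boundary values at level $j+1$, and the global estimate \eqref{eq_conv_global_bound} is a bound on the full sup-norm including $k=0$; so a separate estimate of $\abs{(v-w)_0^{j+1}}$ is unavoidable. The paper supplies it by expanding $\delta t\,(D_t(v-w))_1^j$ explicitly, checking that the resulting coefficients $1-X_1^j\tfrac{\delta t}{(\delta u)^2}\pm Y_1^j\tfrac{\delta t}{\delta u}$ and $X_1^j\tfrac{\delta t}{(\delta u)^2}+Y_1^j\tfrac{\delta t}{\delta u}$ are nonnegative, and arguing by cases on the sign of $v_0^{j+1}-w_0^{j+1}$ and on whether $v_0^j-w_0^j\le v_1^j-w_1^j$; this is also where the first hypothesis, $1-\tfrac{1}{1+\norm{(D_0w)^0_\cdot}_{L^\infty}^2}\ge\delta u$, is actually needed (your proposal lumps both hypotheses into the interior CFL/positivity check and leaves this one unused). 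Without this boundary argument the per-step bound at $k=0$, and hence the global bound, is not established.
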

  \begin{proof}
    Fix $j\in T\setminus\{m\}$.
    Let $v$ be the function, defined on the mesh
    $S\times\{(j)\delta t, (j+1)\delta t\}$,
    so that 
    $v_i^{j}=\pi(h)_i^{j}$
    and
    \begin{align*}
      v_0^{j+1}&=v_0^j + \delta t\,(D_t v)^j_1,\\
      v_i^{j+1}&= v_i^{j} + 
        \delta t F((D_0\pi(h))_i^{j}, (D^2\pi(h))_i^{j}, L[(\pi(h)_\cdot^{j})]),\\
      v_n^{j+1}&=0.
    \end{align*}

    We have, for $k\in S\setminus\{n\}$,
    \begin{align*}
      \abs{\pi(h)^{j+1}_k - w_k^{j+1}}
      \leq
      \abs{\pi(h)^{j+1}_k - v^{j+1}_k} 
      + 
      \abs{v^{j+1}_k - w_k^{j+1}}.
    \end{align*}
    The first term on the right hand side is bounded by $\delta t\delta u C$
    for some 
    $C\in(0,\infty)$, Theorem \ref{thm_consistency}.
    We shall now estimate the $\abs{v^{j+1}_k - w_k^{j+1}}$ term.
    We have, for $k\in S\setminus\{0,n\}$, by direct computation
    \begin{align*}
      \label{eq_conv_k_dt}
          D_t(v^{j}_k - w_k^{j})
      &=
          X_k^j 
          (D^2(v-w))^j_k
          +
          Y_k^j
          (D_0(v-w))^j_k
          +
          {}^v\!A_{k}(D_0v)^j_k
          \left(
            \frac{1}{L(v^{j}_\cdot)}
            -
            \frac{1}{L(w^{j}_\cdot)}
          \right),
    \end{align*}
    where
    \begin{align*}
      X_k^j &= {}^w\!A_k\\
      Y^j_k &= 
        \frac{{}^w\!A_{k}}{L(w^{j}_\cdot)}
        -
        \left(
          \frac{(D_0v)^j_k}{L(w^{j}_\cdot)}
          +
          (D^2v)^j_k
        \right)
        \frac{(D_0w)_k^j + (D_0v)_k^j}{(1+((D_0w)_k^j)^2)(1+((D_0v)_k^j)^2)}
    \end{align*}

    Since $0\leq X_k^j\leq 1$ 
    and $\delta t/(\delta u)^2\leq 1/2$
    we know
    that
    \[
      1 - 2 X_k^j\frac{\delta t}{(\delta u)^2}\geq 0.
    \]
    Two applications of Taylor's theorem with remainder, and
    our assumptions,
    imply that
    \[
      \abs{Y_k^j}
      \leq
      \frac{1}{\rho_0}
      +
      \left(
        \frac{1}{\rho_0}\left(1+\frac{(\delta u)^2}{6}\right)B
        +
        \left(1 + \frac{(\delta u)^2}{12}\right)B
      \right).
    \]
    Therefore, by assumption,
    $X^j_k\pm Y^j_k\delta u/2\geq 0$.
    Using an argument similar to the proof
    of Lemma \ref{lem_max_inf_numerical_gen}
    we get, by Lemma \ref{lem_L_dif_numer}, that
    \begin{align*}
        \abs{v^{j+1}_k - w_k^{j+1}}
      &\leq
        \norm{\pi(h)^{j}_\cdot - w_\cdot^{j}}_{L^\infty}
        +
        \delta t
        \frac{1}{\rho_0}
        \norm{(D_+(\pi(h)- w))_\cdot^j}_{L^\infty}.
    \end{align*}

    We now turn to the estimation of
    the 
    $\abs{v^{j+1}_0 - w_0^{j+1}}$ term.
    Suppose that 
    $0\leq v^{j+1}_0 - w_0^{j+1}$, then
    \begin{align*}
        0
      \leq
        v^{j+1}_0 - w_0^{j+1}
      \leq
        v^{j}_0 - w_0^{j}
        +\delta t (D_t(v-w))_1^j.
    \end{align*}
    Therefore if
    $ v^{j}_0 - w_0^{j} \leq v^{j}_1 - w_1^{j}$
    we get the same result as for 
    $v^{j+1}_1 - w_1^{j+1}$. Suppose instead, then,
    that
    $v^{j}_0 - w_0^{j}> v^{j}_1 - w_1^{j}$. Then we have,
    by expanding $D_t(v-w)$ as above,
    \begin{align*}
        0
      &\leq
        v^{j+1}_0 - w_0^{j+1}
      \leq
        \left(v^{j}_0 - w_0^{j}\right)
        \biggl(
          1 
          - 
          X_1^j\frac{\delta t}{(\delta u)^2}
          +
          Y_1^j\frac{\delta t}{\delta u}
        \biggr)\\
      &\hspace{1cm}
        +
        \left(v^{j}_2 - w_2^{j}\right)
        \biggl(
          X_1^j\frac{\delta t}{(\delta u)^2}
          +
          Y_1^j\frac{\delta t}{\delta u}
        \biggr)
        +
        {}^v\!A_1(D_0v)_1^j
        \left(
          \frac{1}{L(v^j_\cdot)}
          -
          \frac{1}{L(w^j_\cdot)}
        \right).
    \end{align*}
    By reasoning similar to above, the coefficients of the 
    first two terms on the right hand side must be positive and
    hence we get
    \begin{align*}
        \abs{v^{j+1}_0 - w_0^{j+1}}
      &\leq
        \norm{(\pi(h) - w_k)^{j}_\cdot}_{L^\infty}
        +
        \delta t
        \frac{1}{\rho_0}
        \norm{(D_+(\pi(h)- w))_\cdot^j}_{L^\infty}
    \end{align*}
    as claimed.
    If instead
    $0> v^{j+1}_0 - w_0^{j+1}$, then we write
    \begin{align*}
        0
      \leq
        w^{j+1}_0 - v_0^{j+1}
      \leq
        w^{j}_0 - v_0^{j}
        +\delta t (D_t(w-v))_1^j
    \end{align*}
    and we get the same bound on
    $\abs{v^{j+1}_0 - w_0^{j+1}}$ using similar logic.

    It remains to prove 
    the last inequality.
    Substitution of equation \eqref{eq_d+_norm_global_bound}
    in to our recurrence for $v-w$
    results in a simple recursive inequality.
    Telescoping with respect to $j$
    gives the result, once we recall that $m\delta t = T$.
  \end{proof}

\begin{rmk}
Note that $(D_0w)^0$ can not vanish, since we assumed that the curvature of the initial smooth data is strictly positive, zero at precisely one point, and then negative until the curve reaches the axis.
This means in particular that the initial profile can not be a constant.
If $D_0w_0$ is zero, then the initial profile is constant, and the height at the reflection axis must be zero.
\end{rmk}
  
  We have proven the existence of solutions,
  Theorem \ref{thm_inf_bound_w_numeric}, to our numerical scheme.
  We have also proven that our scheme is consistent,
  Theorems \ref{thm_consistency_d+}
  and \ref{thm_consistency},
  and convergent, Theorems \ref{thm_D_+_diff_bound}
  and
  \ref{thm_convergence}.
We have, however, yet to prove that our numeric scheme is well-posed; that is, that the scheme is stable.
We do this now. 

  \begin{thm}
    Let $w,v\in F(M,\R)$ be two solutions to 
    the equation \eqref{eq_full_scheme_2}
    and the boundary
    equations
    \eqref{eq_full_scheme_2_boundary_n}, and
    \eqref{eq_num_boundary_condition}.

    If both $\delta t, \delta u$ satisfy the conditions of
    Theorems \ref{thm_D_+_diff_bound} and
    \ref{thm_convergence}
    then, for all $k\in S$ and all $j\in T\setminus\{m\}$,
    \begin{align*}
          \norm{w^{j}_\cdot-v^j_\cdot}_{L^\infty}
      \hspace{-1cm}&\hspace{1cm}
          \leq
            \delta u\, 2 T C\\
        &
          +
          \delta u
          \frac{2T}{\rho_0}
          \frac{C_3}{C_4}
          \left(\exp\left(C_4 T\right) -1\right)\\
        &
          +
          \norm{w^{0}_\cdot - v_\cdot^{0}}_{L^\infty}
          +
          \frac{T}{\rho_0}
          \exp\left(
            C_4 T
          \right)
          \norm{(D_+(w - v))^{0}_\cdot}_{L^\infty},
    \end{align*}
    where $C, C_3, C_4$ are defined in Theorem
    \ref{thm_convergence},
  \end{thm}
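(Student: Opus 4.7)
The plan is to deduce stability from the convergence result (Theorem \ref{thm_convergence}) by inserting a continuous reference solution $h$ and using the triangle inequality $\|w^j - v^j\|_{L^\infty} \le \|w^j - \pi(h)^j\|_{L^\infty} + \|\pi(h)^j - v^j\|_{L^\infty}$. The structure of the target bound—with the grid-error terms ($\delta u \cdot TC$ and the $C_3/C_4$ term) each appearing with coefficient $2$, alongside a single copy of $\|w^0-v^0\|_{L^\infty}$ and $\|D_+(w-v)^0\|_{L^\infty}$—is exactly what such a triangle inequality produces when the reference solution is chosen to match the initial data of one of the two numerical solutions.

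First I would construct $h$. Using the long-time existence machinery (Theorem \ref{LTE} together with Proposition \ref{propw52}), I would take $h$ to be the smooth solution of \eqref{eqgraph} on $[0,\rho_0]\times[0,T]$ whose initial data is a smooth $W^{5,2}$ interpolation of the mesh function $w^0_\cdot$, so that $\pi(h)^0_\cdot = w^0_\cdot$ exactly. The interpolant can be selected so its $W^{4,\infty}$ norm is controlled in terms of $\|w^0_\cdot\|_{L^\infty}$ and $\|(D_+w)^0_\cdot\|_{L^\infty}$ (and $\|(D_+v)^0_\cdot\|_{L^\infty}$), which by Proposition \ref{propw52} propagates to a uniform bound $B$ on all the derivatives of $h$ required by Theorems \ref{thm_consistency_d+}, \ref{thm_consistency}, \ref{thm_D_+_diff_bound}, and \ref{thm_convergence}. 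This fixes the constants $C, C_3, C_4$ that appear in the conclusion.

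Now apply Theorem \ref{thm_convergence} twice. For the pair $(h, w)$, the initial-data contributions $\|\pi(h)^0 - w^0\|_{L^\infty}$ and $\|D_+(\pi(h) - w)^0\|_{L^\infty}$ both vanish by construction, and \eqref{eq_conv_global_bound} gives
\[
  \|\pi(h)^j - w^j\|_{L^\infty}
    \le \delta u\,TC + \delta u\,\frac{T}{\rho_0}\frac{C_3}{C_4}\bigl(\exp(C_4 T) - 1\bigr).
\]
For the pair $(h, v)$, the initial-data contributions become $\|\pi(h)^0 - v^0\|_{L^\infty} = \|w^0 - v^0\|_{L^\infty}$ and $\|D_+(\pi(h) - v)^0\|_{L^\infty} = \|D_+(w - v)^0\|_{L^\infty}$, and \eqref{eq_conv_global_bound} gives the same two grid-error terms plus these initial differences multiplied by the stated constants. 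Adding the two bounds via the triangle inequality yields the claimed estimate.

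The only mildly delicate point—more bookkeeping than analysis—is ensuring that a single constant $B$ can be taken to govern both applications of Theorem \ref{thm_convergence}. Since we are free to choose the regular interpolant of $w^0_\cdot$, we pick it so that its $W^{5,2}$ norm dominates $\|D_+ w^0\|_{L^\infty}$ and $\|D_+ v^0\|_{L^\infty}$; Proposition \ref{propw52} then supplies a uniform $W^{4,\infty}$ bound on $h$ valid for both comparisons, and the CFL-type conditions in the hypotheses (satisfied by assumption) carry over. Taking the supremum over $k \in S$ in the pointwise triangle-inequality bound gives the displayed $L^\infty$ estimate in the statement.
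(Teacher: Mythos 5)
Your proposal is correct and follows essentially the same route as the paper's proof: introduce a smooth solution $h$ of \eqref{eqgraph} whose initial data interpolates $w^0_\cdot$ on the mesh (the paper invokes Proposition \ref{propwkp} where you use Proposition \ref{propw52} for the needed derivative bounds), then apply the triangle inequality and equation \eqref{eq_conv_global_bound} of Theorem \ref{thm_convergence} to both $\norm{w^j_\cdot-\pi(h)^j_\cdot}_{L^\infty}$ and $\norm{\pi(h)^j_\cdot-v^j_\cdot}_{L^\infty}$, with the vanishing initial differences for the $(h,w)$ pair producing exactly the stated bound.
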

  \begin{proof}
    Let $h$ be a solution to equation \eqref{eqgraph}
    in $C^4([0,\rho_0]\times[0,T))$ so that
    for all $t\in[0,T)$,
    $h(\rho_0, t)=0$, $h_0(0,t)=0$,
    and for all $i\in S$, $h(i\delta u, 0)=w^0_i$.
    Since we specify the initial conditions of
    $h$ at $n$ points we can assume that
    $u\mapsto h(u,0)$ is in $W^{K,P}([0,\rho_0])$
    for any $K\geq 3, P\geq 2$. Thus by
    Proposition \ref{propwkp}
    the conditions on $h$ given in Theorem
    \ref{thm_convergence} hold.
    
    Thus we can estimate
    \begin{align*}
        \norm{w^{j}_\cdot-v^j_\cdot}_{L^\infty}
      &\leq
        \norm{w^{j}_\cdot-\pi(h)^j_\cdot}_{L^\infty}
        +
        \norm{\pi(h)^{j}_\cdot-v^j_\cdot}_{L^\infty}
    \end{align*}
    and applying Theorem \ref{thm_convergence},
    specifically equation \eqref{eq_conv_global_bound},
    to both terms on the right hand of the above
    inequality.
    Noting that $h$ is equal to $w$ on the initial boundary
    gives us the required result.
  \end{proof}

\section{Numerical results}
  This section describes the numerical results of implementing the scheme
  described in Section \ref{sec_numerical_algorithm}.
  We present results using three different initial conditions.

  The first initial condition is a smooth sinusoidal function
  with one inflection point, $g_1:[0,\rho_0]\to\R^+$.
  For $r_1\in(0.5, 1)$ and $r_2\in\R^+$ the function $g_1$ is given by
  \begin{align*}
    B &= \frac{\pi}{2r_1}, &
    A &= \frac{\rho_0}{r_2 ( 1 + \abs{\cos(B)})},\\
    D &= -A \cos(B\rho_0), &
    g_1(u) &= A \cos\left(B u\right) + D.
  \end{align*}
  In our simulations we take $r_1=0.7$ and $r_2=2$. The parameter
  $r_2$ adjusts the length to heigh ratio of $g_1$. Physical measurements
  imply that $r_2\approx 2$. The parameter $r_1$ controls the position of
  the inflection point of $g_1$. We shall call this initial condition
  the ``inflection'' initial condition. Figure \ref{fig_initial_inflection}
  shows the graph of $g_1$ over $[0,3]$ for $r_1=0.7$ and $r_2=2$.

  The second initial condition is a compact perturbation of $g_1$ it is defined
  by
  \begin{align*}
    c &= \frac{2}{4}\rho_0, &
    r &= \frac{\rho_0 - c}{3},\\
    m &= 2\exp(1), &
    g_2(u) &= g_1(u) + m\exp\left(\frac{-r}{\max\{0, r - (u -c)^2\}}\right).
  \end{align*}
  The parameters $c,r,m$ describe, respectively, the centre, radius and
  height of a compact bump which is added to $g_1$. We shall call this initial
  condition the ``bump'' initial condition.
  Figure \ref{fig_initial_bump} shows the graph of $g_2$ over $[0,3]$.

  The third initial condition is a function estimated from a photograph of 
  experimental data which is then scaled and translated to give a function
  over the interval $[0,3]$. Figures \ref{subfigure_experiemnt}
  and \ref{subfigure_experiemnt_interp} present the experimental image 
  along with the estimates of the location of the dorsal closure (DC)
  leading edge at various times. Figure \ref{fig_initial_experimental}
  gives the graph of the experimental data.

\begin{comment}

  \begin{figure}[pht]
    \centering
    \begin{subfigure}[t]{0.45\textwidth}
      \includegraphics[width=\textwidth]{images/inflection-eps-converted-to.pdf}
      \caption{A graph of the ``inflection'' initial data.}
      \label{fig_initial_inflection}
    \end{subfigure}
    \begin{subfigure}[t]{0.45\textwidth}
      \includegraphics[width=\textwidth]{images/bump-eps-converted-to.pdf}
      \caption{A graph of the ``bump'' initial data.}
      \label{fig_initial_bump}
    \end{subfigure}
    \begin{subfigure}[t]{0.45\textwidth}
      \includegraphics[width=\textwidth]{images/experimental-eps-converted-to.pdf}
      \caption{A graph of the experimental initial data.}
      \label{fig_initial_experimental}
    \end{subfigure}
  \end{figure}

\end{comment}

  We use the \python{COFFEE},
  \cite{doulis2019coffee},
  package to provide the ``numerical infrastructure''
  for our simulations. 
  The \python{COFFEE} package was written in Python
  with ease of use as mind and has been used in numerous
  publications, e.g. \cite{frauendiener2021non,frauendiener2021can} see also the
  citations of \cite{doulis2019coffee}. 
  This paper is the first example of \python{COFFEE} being used to
  solve a parabolic system. 
  The \python{COFFEE} package
  has previously shown the ability to achieve round off. Thus we feel no need
  to demonstrate the correct implementation of algorithms by achieving round
  off in this paper. 
  The code was run on a commercial i5-10400 cpu with 16gb of ram running
  Ubuntu server 20.04.

  Simulations of the three initial conditions
  were performed on grids with $n=20 \times 2^i$ for $i=0, \ldots, 10$.
  All error calculations were performed with reference to the
  highest resolution simulation. 
  The $\log_2 L^\infty$ convergence rates
  at time $4$ are presented in
  Table \ref{table_error}.
  %Figures \ref{figure_pointwise_error_inflection}, 
  %\ref{figure_pointwise_error_inflection},
  %and \ref{figure_pointwise_error_experimental} present the
  %$\log_2 L^\infty$ pointwise error for time $4$.
  %Graphs of the evolution of the solutions over time are given in
  %Figure \ref{figure_graph_over_time}.
  Due to the nature of the PDE the error
  results for other times are similar.

    \newcommand{\tablescale}{0.79}
    \begin{table}
      \hfill
      \scalebox{\tablescale}{
        \begin{tabular}{rlr}
          \multicolumn{3}{c}{Error for inflection initial data at time 4.0}\\
          \toprule
          $\delta u$ & $\ln L^\infty$ error & Rate of convergence\\
          \midrule
										0.15 & -2.324954    &              \\
                   0.075 & -2.902935    &    0.5780    \\
                  0.0375 & -3.677378    &    0.7744    \\
                 0.01875 & -4.564898    &    0.8875    \\
                0.009375 & -5.516336    &    0.9514    \\
               0.0046875 & -6.509211    &    0.9929    \\
              0.00234375 & -7.541358    &    1.0321    \\
             0.001171875 & -8.633302    &    1.0919    \\
						0.0005859375 & -9.851899    &    1.2186    \\
           0.00029296875 & -11.434980   &    1.5831   \\
          \bottomrule
        \end{tabular}
      }
      \hfill
      \scalebox{\tablescale}{
        \begin{tabular}{rlr}
          \multicolumn{3}{c}{Error for bump initial data at time 4.0}\\
          \toprule
          $\delta u$ & $\ln L^\infty$ error & Rate of convergence\\
          \midrule
                    0.15 & -1.483390    &           \\   
                   0.075 & -2.067566    &    0.5842 \\   
                  0.0375 & -2.848158    &    0.7806 \\   
                 0.01875 & -3.738997    &    0.8908 \\   
                0.009375 & -4.692272    &    0.9533 \\   
               0.0046875 & -5.686106    &    0.9938 \\   
              0.00234375 & -6.718740    &    1.0326 \\   
             0.001171875 & -7.810930    &    1.0922 \\   
            0.0005859375 & -9.029651    &    1.2187 \\   
           0.00029296875 & -10.612794   &    1.5831 \\
          \bottomrule
        \end{tabular}
      }\hfill \\[2em]
      \centering
      \scalebox{\tablescale}{
        \begin{tabular}{rlr}
          \multicolumn{3}{c}{Error for experimental initial data at time 4.0}\\
          \toprule
          $\delta u$ & $\ln L^\infty$ error & Rate of convergence\\
          \midrule
                    0.15 & -2.523499                \\  
                   0.075 & -3.136484    &    0.6130 \\   
                  0.0375 & -3.927557    &    0.7911 \\   
                 0.01875 & -4.823011    &    0.8955 \\   
                0.009375 & -5.778167    &    0.9552 \\   
               0.0046875 & -6.772835    &    0.9947 \\   
              0.00234375 & -7.805873    &    1.0330 \\   
             0.001171875 & -8.898258    &    1.0924 \\   
            0.0005859375 & -10.117087   &    1.2188 \\   
           0.00029296875 & -11.700276   &    1.5832 \\
          \bottomrule
        \end{tabular}
      }
      \caption{The rates of convergence of the $\ln L^\infty$ 
			error computed against
      the highest resolution simulation for each of
			the described initial data.
      }
      \label{table_error}
    \end{table}

\section*{Acknowledgements}
The first author was supported by an IPRS Scholarship at University of Wollongong.
She is grateful for their support.
The third athor acknowledges support from ARC DECRA DE190100379.

\bibliographystyle{plain}
\bibliography{mbib}

\begin{thebibliography}{10}

\bibitem{almeida2011mathematical}
Luis Almeida, Patrizia Bagnerini, Abderrahmane Habbal, St{\'e}phane Noselli,
  and Fanny Serman.
\newblock A mathematical model for dorsal closure.
\newblock {\em Journal of theoretical biology}, 268(1):105--119, 2011.

\bibitem{ciarlet1970discrete}
Philippe~G Ciarlet.
\newblock Discrete maximum principle for finite-difference operators.
\newblock {\em Aequationes mathematicae}, 4(3):338--352, 1970.

\bibitem{CP2009}
L~Cueto-Felgueroso and R~Juanes.
\newblock A phase field model of unsaturated flow.
\newblock {\em Water Resour. Res.}, 45(10):W10409, 2009.

\bibitem{doulis2019coffee}
Georgios Doulis, J{\"o}rg Frauendiener, Chris Stevens, and Ben Whale.
\newblock Coffee—an mpi-parallelized python package for the numerical
  evolution of differential equations.
\newblock {\em SoftwareX}, 10:100283, 2019.

\bibitem{ecker1989mce}
K.~Ecker and G.~Huisken.
\newblock {Mean curvature evolution of entire graphs}.
\newblock {\em Ann. of Math. (2)}, 130(2):453--471, 1989.

\bibitem{farago2012discrete}
Istv{\'a}n Farag{\'o}, J{\'a}nos Kar{\'a}tson, and Sergey Korotov.
\newblock Discrete maximum principles for nonlinear parabolic pde systems.
\newblock {\em IMA Journal of Numerical Analysis}, 32(4):1541--1573, 2012.

\bibitem{frauendiener2021can}
J{\"o}rg Frauendiener, Jonathan Hakata, and Chris Stevens.
\newblock Can gravitational waves halt the expansion of the universe?
\newblock {\em arXiv preprint arXiv:2105.01906}, 2021.

\bibitem{frauendiener2021non}
J{\"o}rg Frauendiener and Chris Stevens.
\newblock The non-linear perturbation of a black hole by gravitational waves.
  i. the bondi-sachs mass loss.
\newblock {\em arXiv preprint arXiv:2105.09515}, 2021.

\bibitem{gage1986heat}
M.~Gage and R.S. Hamilton.
\newblock {The heat equation shrinking convex plane curves}.
\newblock {\em J. Differential Geom.}, 23(1):69--96, 1986.

\bibitem{GNS3}
Emilio Gagliardo.
\newblock Ulteriori proprieta di alcune classi di funzioni in piu variabili.
\newblock {\em Ricerche Mat.}, 8:24--51, 1959.

\bibitem{HP}
Huisken Gerhard and Polden Alexander.
\newblock Geometric evolution equations for hypersurfaces.
\newblock In {\em Calculus of variations and geometric evolution problems},
  pages 45--84. Springer, 1999.

\bibitem{grayson1987heat}
M.~Grayson.
\newblock {The heat equation shrinks embedded plane curves to round points}.
\newblock {\em J. Differential Geom.}, 26:285--314, 1987.

\bibitem{gustafsson2013time}
Bertil Gustafsson, Heinz-Otto Kreiss, and Joseph Oliger.
\newblock {\em Time dependent problems and difference methods}, volume~2.
\newblock Wiley Online Library, 2013.

\bibitem{huisken1986cch}
G.~Huisken.
\newblock {Contracting convex hypersurfaces in Riemannian manifolds by their
  mean curvature}.
\newblock {\em Invent. Math.}, 84(3):463--480, 1986.

\bibitem{hutson2003forces}
M~Shane Hutson, Yoichiro Tokutake, Ming-Shien Chang, James~W Bloor, Stephanos
  Venakides, Daniel~P Kiehart, and Glenn~S Edwards.
\newblock Forces for morphogenesis investigated with laser microsurgery and
  quantitative modeling.
\newblock {\em Science}, 300(5616):145--149, 2003.

\bibitem{jacinto2002dynamic}
Antonio Jacinto, Sarah Woolner, and Paul Martin.
\newblock Dynamic analysis of dorsal closure in drosophila: from genetics to
  cell biology.
\newblock {\em Developmental cell}, 3(1):9--19, 2002.

\bibitem{kiehart2000multiple}
Daniel~P Kiehart, Catherine~G Galbraith, Kevin~A Edwards, Wayne~L Rickoll, and
  Ruth~A Montague.
\newblock Multiple forces contribute to cell sheet morphogenesis for dorsal
  closure in drosophila.
\newblock {\em The Journal of cell biology}, 149(2):471--490, 2000.

\bibitem{LSU}
O~Ladyzhenskaya, V~Solonnikov, and N~Uraltseva.
\newblock Linear and quasilinear parabolic equations of second order.
\newblock {\em Translation of Mathematical Monographs, AMS, Rhode Island},
  1968.

\bibitem{layton2009drosophila}
Anita~T Layton, Yusuke Toyama, Guo-Qiang Yang, Glenn~S Edwards, Daniel~P
  Kiehart, and Stephanos Venakides.
\newblock Drosophila morphogenesis: tissue force laws and the modeling of
  dorsal closure.
\newblock {\em HFSP journal}, 3(6):441--460, 2009.

\bibitem{Li}
G.M. Lieberman.
\newblock {\em {Second order parabolic differential equations}}.
\newblock World Scientific Pub. Co. Inc., 1996.

\bibitem{Lu}
Alessandra Lunardi.
\newblock {\em Analytic semigroups and optimal regularity in parabolic
  problems}.
\newblock Springer Science \& Business Media, 2012.

\bibitem{mccoy}
James McCoy.
\newblock Mixed volume preserving curvature flows.
\newblock {\em Calculus of Variations and Partial Differential Equations},
  24(2):131--154, 2005.

\bibitem{mincsovics2010discrete}
Mikl{\'o}s~E Mincsovics.
\newblock Discrete and continuous maximum principles for parabolic and elliptic
  operators.
\newblock {\em Journal of computational and applied mathematics},
  235(2):470--477, 2010.

\bibitem{GNS2}
Louis Nirenberg.
\newblock An extended interpolation inequality.
\newblock {\em Annali Della Scuola Normale Superiore di Pisa-Classe di
  Scienze}, 20(4):733--737, 1966.

\bibitem{GNS1}
Louis Nirenberg.
\newblock On elliptic partial differential equations.
\newblock In {\em Il principio di minimo e sue applicazioni alle equazioni
  funzionali}, pages 1--48. Springer, 2011.

\bibitem{peralta2007upregulation}
XG~Peralta, Y~Toyama, MS~Hutson, R~Montague, S~Venakides, DP~Kiehart, and
  GS~Edwards.
\newblock Upregulation of forces and morphogenic asymmetries in dorsal closure
  during drosophila development.
\newblock {\em Biophysical journal}, 92(7):2583--2596, 2007.

\bibitem{JJS}
Jason~J Sharples.
\newblock Linear and quasilinear parabolic equations in sobolev space.
\newblock {\em Journal of Differential Equations}, 202(1):111--142, 2004.

\bibitem{STW2003}
JJ~Sharples, IN~Towers, G~Wheeler, Valentina-Mira Wheeler, and JA~McCoy.
\newblock Modelling fire line merging using plane curature flow.
\newblock In {\em In Piantadosi, J., Anderssen, R.S. and Boland J. (eds)
  MODSIM2013}, pages 256--262. MSSANZ, 2013.

\bibitem{gwthesis}
Glen Wheeler.
\newblock Fourth order geometric evolution equations.
\newblock {\em Bulletin of the Australian Mathematical Society},
  82(3):523--524, 2010.

\bibitem{wood2002wound}
William Wood, Antonio Jacinto, Richard Grose, Sarah Woolner, Jonathan Gale,
  Clive Wilson, and Paul Martin.
\newblock Wound healing recapitulates morphogenesis in drosophila embryos.
\newblock {\em Nature cell biology}, 4(11):907--912, 2002.

\end{thebibliography}

\end{document}